\documentclass{amsart}

\usepackage[utf8]{inputenc}
\usepackage[T1]{fontenc}
\usepackage[english]{babel}
\usepackage{color}
\usepackage{amsmath} 
\usepackage{amssymb} 
\usepackage{amsthm}
\usepackage{graphicx}
\usepackage[colorlinks=true, linkcolor=blue]{hyperref}
\usepackage{cancel}

\usepackage{lmodern}
\usepackage{microtype}

\usepackage{enumerate}
\usepackage[shortlabels]{enumitem}

\theoremstyle{plain}
\newtheorem{theorem}{Theorem}[section]
\newtheorem*{theorem*}{Theorem}
\newtheorem{prop}[theorem]{Proposition}
\newtheorem{lemma}[theorem]{Lemma}

\newtheorem*{cor*}{Corollary}

\newtheorem{defi}[theorem]{Definition}

\newcommand {\R} {\mathbb{R}} \newcommand {\Z} {\mathbb{Z}}
\newcommand {\T} {\mathbb{T}} \newcommand {\N} {\mathbb{N}}

\newcommand {\p} {\partial}
\newcommand {\dt} {\partial_t}

\begin{document}
\title[Echoes around Waves]{On echo chains in the linearized Boussinesq equations around traveling
  waves}
\begin{abstract}
  We consider the 2D Boussinesq equations with viscous but without thermal dissipation and
  observe that in any neighborhood of Couette flow and hydrostatic balance (with
  respect to local norms) there are time-dependent traveling wave solutions of the form $\omega=-1+
  f(t)\cos(x-ty)$, $\theta=\alpha y + g(t)\sin(x-ty)$.  
  As our main result we show that the linearized equations around these waves
  for $\alpha=0$ exhibit echo
  chains and norm inflation despite viscous dissipation of the
  velocity.
  Furthermore, we construct initial data in a critical Gevrey 3 class, for which
  temperature and vorticity diverge to infinity in Sobolev regularity as
  $t\rightarrow \infty$ but for which the velocity still converges.
\end{abstract}
\author{Christian Zillinger}
\address{Karlsruhe Institute of Technology, Englerstraße 2,
  76131 Karlsruhe, Germany}
\email{christian.zillinger@kit.edu}
\keywords{Boussinesq equations, partial dissipation, resonances, blow-up}
\subjclass[2010]{35Q35,35Q79,76D05,35B40}
\maketitle
\setcounter{tocdepth}{1}
\tableofcontents

\section{Introduction and Main Results}
\label{sec:intro}

In this article we study the long-time asymptotic stability of the 2D
Boussinesq equations without thermal dissipation and with isotropic viscous dissipation:
\begin{align}
  \label{eq:bous}
  \begin{split}
    \dt v + v \cdot \nabla v + \nabla p &= \nu \Delta v +\theta e_2, \\
  \dt \theta + v \cdot \nabla \theta &=0, \\
  \nabla \cdot v &=0, \\
  (t,x,y) &\in (0,\infty) \times \T \times \R. 
  \end{split}
\end{align}
Here $v$ denotes the velocity of a fluid, $p$ denotes the pressure and $\theta$
denotes the temperature of the fluid.

The Boussinesq equations are a coupled system of the Navier-Stokes equations and
a diffusion equation. They describe the evolution of a heat conducting viscous
fluid, where the term $\theta e_2$ causes hot fluid to rise above cold fluid and
thus models buoyancy.
In particular, if a layer of hot fluid lies beneath a layer of cold fluid, the
system may exhibit a so-called Rayleigh-B\'enard instability \cite{doering2018long}, which can be
suppressed by sufficiently strong shear flow or dissipation
\cite{zillinger2020boussinesq}.

The study of the stability and asymptotic behavior of the Boussinesq
equations, in particular with anisotropic or partial dissipation, is an area of
very active research. We in particular mention the recent works
\cite{elgindi2015sharp,widmayer2018convergence,doering2018long,yang2018linear,wu2019stability,deng2020stability,wu2020stabilizing,dong2020stability2,masmoudi2020stability,tao2020stability,lai2021optimal,zillinger2020boussinesq,zillinger2020enhanced}
and the classical wellposedness results \cite{chae1999local,chae2006global}.

Following the seminal works of Villani and Mouhot \cite{Villani_long} on Landau damping in plasma physics and of Bedrossian and Masmoudi \cite{bedrossian2013asymptotic} on inviscid damping
for the Euler equations, questions of the effects of mixing have attracted strong interest.
In the Euler setting one observes that for small, smooth perturbations of an affine shear flow, $v=(y,0)$, the perturbation of the velocity field asymptotically converges.
This asymptotic stability of the velocity field is known as \emph{inviscid
  damping} and related to the Orr mechanism \cite{orr1907stability}.
Subsequently it was shown that this mechanism is very robust and linear inviscid
damping holds for rather general classes of monotone flows at rather low Sobolev regularity
\cite{Zhang2015inviscid,WZZvorticitydepl,Zill5,zillinger2019linear,Zill6,coti2019degenerate,BCZVvortex2017,ionescu2018inviscid,jia2019linear}.
However, for the \emph{nonlinear equations} very high, Gevrey $2$ regularity requirements are imposed to establish stability and inviscid damping \cite{ionescu2018inviscid,ionescu2020nonlinear}.
Already in \cite{bedrossian2015inviscid} it was sketched in terms of a toy
model that iterated nonlinear resonances might lead to norm inflation with an
exponential dependence on the frequency, which gave strong evidence of the
necessity of Gevrey regularity.

In \cite{dengmasmoudi2018} it was shown that there indeed exists data exhibiting
chains of resonances (called echo chains) and associated norm inflation behavior (see also \cite{bedrossian2016nonlinear}
for similar results for the Vlasov-Poisson equations).
This mechanism has been further studied in detail in \cite{dengZ2019} for the
Euler equations linearized around traveling waves. As the main results of \cite{dengZ2019} it is
shown that the norm inflation does \emph{not} necessarily imply that inviscid damping
fails.
On the contrary, there exists data in a critical Gevrey class such that the solution
not only exhibits norm inflation, but even blow-up: the \emph{vorticity} diverges
to infinity in Sobolev regularity as time tends to infinity. Yet, for the same
data the \emph{velocity} still is damped in $L^2$ to another shear flow as time tends
to infinity. Hence, damping of the \emph{velocity}, which is the physical effect of
inviscid damping, persists \emph{despite} blow-up of the \emph{vorticity}. Similar results also holds for the Vlasov-Poisson equations
\cite{zillinger2020landau}.

In this article we study whether such resonances are also be present in the
Boussinesq equations without thermal dissipation and whether the Gevrey regularity
requirements of \cite{masmoudi2020stability} are necessary.
In these equations there is competition of viscous dissipation, destabilization by buoyancy and resonance effects. In particular, it is a priori not clear whether these equations can sustain resonances and, if so, what the implications for norm inflation and asymptotic stability are.
As the main results of this article, we show that the Boussinesq equations
linearized around traveling waves indeed exhibit norm inflation and blow-up of the
temperature in Sobolev regularity as time tends to infinity. Yet, damping of the
velocity field (in lower Sobolev norms) persists despite this blow-up.\\

The key mechanism of this article's results is given by a (nonlinear) resonance mechanism, which exploits the system structure of the Boussinesq equations.
Similar resonance mechanisms also underlie instability results in the Euler equations \cite{dengZ2019}, where they are known as fluid echoes, and the Vlasov-Poisson equations
where they are called plasma echoes \cite{bedrossian2016nonlinear}.
In both the Euler setting \cite{yu2005fluid} and the plasma setting \cite{malmberg1968plasma} these echoes have been experimentally observed.
\\

We observe that a combination of a shear flow and hydrostatic balance
\begin{align}
  \label{eq:stationarysol}
  v=
  \begin{pmatrix}
    y \\0
  \end{pmatrix}, \theta = \alpha y,
\end{align}
is a stationary solution of the Boussinesq equations \eqref{eq:bous}
\begin{align*}
  \dt v + v \cdot \nabla v + \nabla p &= \nu \Delta v +\theta e_2, \\
  \dt \theta + v \cdot \nabla \theta &=0, \\
  \nabla \cdot v &=0.
\end{align*}
for any $\alpha\geq 0$, where in this article, for simplicity, we restrict to the case $\alpha=0$.
In this setting the echo mechanism then works as follows:
\begin{itemize}
\item At the initial time one introduces a perturbation of the temperature of
  the form $\epsilon e^{ikx+i\eta y}$.
  According to the linearized dynamics this solution will be mixed and will weakly converge to zero as time tends to infinity.
\item At a later time $\tau<\frac{\eta}{k}$ we introduce another perturbation of the
  temperature at a different frequency in $x$ of the form $\epsilon e^{ilx}$.
    According to the linearized equation also this perturbation will weakly converge to zero as time tends to infinity.
\item In the linearized equations around the \emph{stationary state} \eqref{eq:stationarysol} both perturbations
  do not interact. However, in the nonlinear evolution (and in the linearization
  around a traveling wave) one observes a large time- and frequency-localized correction. Both perturbations result in a nonlinear \emph{echo}:
  \begin{itemize}
  \item 
    By the buoyancy term the first perturbation at frequency $k$ generates a
    perturbation of the vorticity
    $\omega=\nabla \times v$. 
  \item This vorticity perturbation leads to a frequency-localized resonance in
    the velocity at the resonant time $\frac{\eta}{k}$.
  \item By the nonlinearity $v\cdot \nabla \theta$ this velocity resonance then
    interacts with the second perturbation of the temperature at mode $l$,
    exciting the temperature at frequency $k+l$ in $x$.
  \end{itemize}  
\end{itemize}
We stress the presence of viscous dissipation in the velocity equations. Thus, in contrast to the Euler equations or Vlasov-Poisson equations, where the
density directly generates the velocity, the nonlinear echo effect here relies on the
system structure of the Boussinesq equations. The resonance mechanism starts in $\theta$, then excites $v$ and in turn excites $\theta$.
\\

Building on this heuristic of a single echo interaction, in this article we show
that such perturbations of the temperature can be identified with \emph{traveling wave}
solutions.
Moreover, they can result in not just one echo but rather a chain of echoes.
That is, let there be a traveling wave solution with $l=-1$ and introduce another perturbation at
frequency $k$ in $x$ and $\eta$ in $y$ with $\eta> k>1$.
Then by the above sketch the interaction of the second perturbation with the underlying wave will result in an echo at frequency $k-1$ at around the resonant time $\frac{\eta}{k}$.
In turn, this echo correction at frequency $k-1$ will interact with the underlying wave to generate an echo at frequency $k-2$ at around the later time $\frac{\eta}{k-1}$.
Iterating this procedure, we thus generate an \emph{echo chain}:
\begin{align*}
  k \rightarrow k-1 \rightarrow k-2 \dots \rightarrow 1,
\end{align*}
where the size of the steps in frequency corresponds to the frequency  of the
underlying wave.
As we show in Theorem \ref{theorem:main} of Section \ref{sec:main} the associated norm inflation
along this chain can be of size $\exp(C\sqrt[3]{\eta})$, which corresponds to a
Gevrey $3$ regularity class. This agrees with recent nonlinear stability results
of \cite{masmoudi2020stability}.

In the following subsections we provide an outline of the main results of this article.

\subsection{Traveling Waves}
\label{sec:travelwaves}
As a main result of Section \ref{sec:wave} we show that the tuple
\begin{align}
  \begin{split}
  \label{eq:wave}
  v&=
  \begin{pmatrix}
    y \\0
  \end{pmatrix} + \frac{f(t)}{1+t^2}\sin(x-ty)\begin{pmatrix} t\\ 1\end{pmatrix},\\
  \omega&=\nabla v= -1 + f(t)\cos(x-ty),\\
  \theta&= \alpha y + g(t)\sin(x-ty),
  \end{split}
\end{align}
yields a solution of the nonlinear Boussinesq equations with $\nu\geq 0$ for any $f(t), g(t)$, which
solve an associated ODE (see Proposition \ref{prop:waves}).
In particular, choosing $f(0),g(0)$ small, we can view these \emph{traveling
  wave} solutions as initially arbitrarily small perturbations of the stationary
solutions \eqref{eq:stationarysol}.
Therefore, we suggest that in order to understand the nonlinear perturbation
problem around \eqref{eq:stationarysol}, one should first study the linearized
problem around the waves \eqref{eq:wave}.
We remark that such waves are also solutions of the inviscid problem. However, in that case generically $f(t)$ does not remain
bounded as $t\rightarrow\infty$ (see Lemma \ref{lemma:boundf}).

The linearized problem around such a wave in vorticity formulation then reads
\begin{align}
  \label{eq:linwavefull}
  \begin{split}
  \dt \omega + y \p_x \omega + \frac{f(t)}{1+t^2}\sin(x-ty) (\p_y-t\p_x) \omega + v \cdot \nabla f(t)\cos(x-ty) &= \nu \Delta \omega + \p_x \theta, \\
  \dt \theta + y \p_x \theta + \frac{f(t)}{1+t^2}\sin(x-ty) (\p_y-t\p_x) \theta + v \cdot \nabla g(t)\cos(x-ty)  + v_2 \alpha &=0.
  \end{split}
\end{align}
As a simplification, throughout this article we consider the following setup:
\begin{itemize}
\item We consider the case $\alpha=0$, which implies that $g(t)=g(0)$ is
  independent of time.
  Furthermore, we assume that $g=2c\nu$ for a small constant $0<c<0.001$, which
  by Lemma \ref{lemma:boundf} further implies that
  \[f(t)\leq \frac{4c}{1+t^2}.\]
\item We remove the shear term $\frac{f(t)}{1+t^2}\sin(x-ty) (\p_y-t\p_x)$ and
  we fix the $x$ average of $\theta$ and $\omega$ to be zero by a small forcing.
  As we discuss in Section \ref{sec:wave} we expect the shear term to not change the
  dynamics qualitatively since $\frac{f(t)}{1+t^2}$ is small, rapidly decaying
  and integrable in time.
  Similarly, a change of $x$ average would correspond to a change of the underlying
  shear flow $U(y,t)$ which we expect to be controlled in terms of a change of coordinates
  $z=U(y,t)$.
  However, the associated change of variables
  would introduce small further than nearest neighbor interactions and variable
  coefficients in the differential operators, thus making the analysis technically
  much more involved. We hence neglect these effects in the present article.
\end{itemize}
The equations studied in this article thus read (see Definition \ref{defi:waveequ})
\begin{align}
  \label{eq:linwave}
  \begin{split}
    \dt \omega + y \p_x \omega &= -(v \cdot \nabla f \cos(x-ty))_{\neq} + \nu \Delta \omega + \p_x \theta, \\
    \dt \theta + y \p_x \theta  + (v \cdot \nabla g \cos(x-ty))_{\neq}  &=0, \\
    (t,x,y) &\in (0,\infty)\times \T \times \R,
  \end{split}
\end{align}
where $()_{\neq}$ denotes the projection removing the $x$-average and $g=c\nu, f(t)\leq \frac{c}{1+t^2}$ (see Lemmas \ref{lemma:boundf} and \ref{lem:linearizationaroundwave}).
We then show that this system exhibits chains of resonances.

We remark that the structure of the equations \eqref{eq:linwave} is very similar to the one of the linearized Euler
equations around a traveling wave, $\omega=-1 + c \cos(x-ty)$ (see \cite{dengZ2019}), with the following main differences:
\begin{itemize}
\item Let $G:= \nu \p_x \omega + \p_x^2 \Delta^{-1} \theta$.
  Then the replacement of the Biot Savart law $v_1=\p_y \phi = \p_y \Delta^{-1}
  \omega$ in the Euler equations is given by
  \[\p_y \phi = \nu^{-1} \p_x \p_y \Delta^{-2} \theta + \nu^{-1} \p_y\p_x^{-1} \Delta^{-1}G.\]
  In particular, this mapping is of order $-2$ with respect to $\theta$
  instead of $-1$ and corresponds to a real-valued instead of an imaginary
  Fourier multiplier. For this reason we here choose the underlying wave to be
  given by a cosine instead of a sine. 
\item Here $\frac{g}{2\nu}$ serves as a parameter of the strength of the
  interaction. In this article we will focus on the setting where this parameter is
  small, i.e. $g = 2c \nu$ for a small constant.
  We point out that this coupling parameter is a large challenge if one were to
  consider the inviscid limit $\nu \downarrow 0$ with $g$ fixed (or slowly
  decaying in $\nu$) instead.
\item We observe that in terms of frequency $\eta$ with respect to $y$, in the
  Euler setting we have a decay of multipliers with a rate $\eta^{-1}$ as $\eta
  \rightarrow \infty$, since the Biot-Savart law is of order $-1$.
  In contrast in the present setting we have decay with a rate $\eta^{-3}$. This implies that
  the decoupling of neighboring modes becomes much stronger for large $\eta$.
\item As we discuss in Section \ref{sec:wave} the resonances rely on the
  coupling between the temperature $\theta$ and the vorticity $\omega$ by $G$.
  More precisely, the underlying wave in $\theta$ leads to a growth of the
  velocity at a critical time. While the perturbation of the vorticity is then
  subsequently damped, this velocity induces a growth of a different mode of the
  temperature perturbation, which then excites the velocity again at a later
  time.
  We stress that this system structure of the resonance mechanism strongly
  differs from the one in the Euler equations \cite{dengZ2019}. In particular,
  here the vorticity and velocity experience strong, mixing-enhanced
  dissipation. The resonance mechanism hence has to exploit the absence of
  thermal dissipation while making use of resonances in the velocity.
\end{itemize}
We in particular stress the $\eta^{-3}$ decay. In stark contrast to the Euler equations
considered in \cite{dengZ2019}, here the regime where $\eta$ is arbitrarily large actually
turns out to be \emph{better behaved} due to stronger decay of coefficients and as result
stronger separation of frequencies.

Indeed, in Section \ref{sec:G} we study the resonance mechanism for those
frequencies $\eta$ with
\begin{align}
  \label{eq:largeness}
  |\eta|\geq c^{-1},
\end{align}
where $g=c\nu$.
This restriction is justified in Section \ref{sec:small}, where we show that
otherwise the evolution is asymptotically stable and does not exhibit large norm
inflation.

\subsection{Main Results}
\label{sec:main}

As the main result of our article we show that the linearized equations
\eqref{eq:linwave} around a
wave indeed exhibit the sketched echo chain mechanism. Moreover, in addition to
norm inflation there is a critical Gevrey $3$ class of initial data for which the
\emph{temperature} and \emph{vorticity} diverge to infinity in Sobolev regularity as time
tends to infinity, but the \emph{velocity} still converges. Thus damping of the
\emph{velocity} may persist despite blow-up and viscous dissipation is \emph{not} sufficient
to suppress resonances in the form of echo chains.
\begin{theorem}[Stability, norm inflation and blow-up]
  \label{theorem:main}
  Consider the equation \eqref{eq:linwave} and suppose that $\frac{g}{2\nu}=:c$
  satisfies $c<0.001$. Further define $G=\nu \p_x \omega + \p_x^2 \Delta^{-1}\theta$
  (see Lemma \ref{lemma:goodunknown} for a formulation of \eqref{eq:linwave} in
  terms of $G$).
  \begin{itemize}
  \item There exists $C>0$ such that if the (Fourier transform of the) initial data satisfy
    \[\int \exp(C\sqrt[3]{|\eta|}) (1+k^2)^N (|\mathcal{F} \theta_0|^2+ \eta
      |\mathcal{F}G_0|^2) < \infty,\]
    then for all times $t>0$ it holds that
    \[\int \exp(\frac{C}{2} \sqrt[3]{|\eta-kt|}) (1+k^2)^N (|\mathcal{F} \theta(t)|^2+ \eta |\mathcal{F}G(t)|^2) < \infty.\]
   The \emph{evolution preserves Gevrey $3$ regularity} up to a loss of constant. 
\item For $c \eta > \nu^{-3/2}$ there exists initial data
  $\theta_0 \in L^2$ localized at frequency $\eta$ and $G_0=0$, such that for
  all $t> 2\eta$ the solution satisfies
  \[\|\theta(t)\|_{L^2}\geq \exp(\sqrt[3]{c\eta}).\]
  There exists frequency localized initial data which exhibits \emph{norm
    inflation}.
  However, after attaining this norm inflation the solution is stable for all
  future times.
\item Moreover, for every $s_0 \in \R$ there exists $0<C'<C$ and initial data with
      \[\int \exp(C'\sqrt[3]{\eta}) (1+k^2)^N (|\mathcal{F} \theta_0|^2+
        |\mathcal{F}\omega_0|^2) < \infty,\]
      such that $\theta(t)$ converges in $H^s$ for $s<s_0$ and diverges to
      infinity in $H^s$ for $s>s_0$, as $t\rightarrow \infty$.
      The Gevrey $3$ regularity class is hence a critical space for stability
      and damping may persist despite \emph{blow-up}. 
  \end{itemize}
\end{theorem}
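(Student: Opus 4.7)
The overall plan is to work in Fourier on the good unknown $(G,\theta)$ of Lemma~\ref{lemma:goodunknown} and to track the nearest-neighbor chain of interactions between modes $(k,\eta)$ and $(k\pm 1,\eta)$ induced by multiplication by the traveling wave $f(t)\cos(x-ty)$. Passing to a profile that removes the free transport $y\p_x$ converts this multiplication into a constant-frequency nearest-neighbor coupling in $k$ whose amplitude is concentrated near the resonant time $\tau_k=\eta/k$ and of size comparable to $c/(1+(\eta-kt)^2)$. The replacement Biot--Savart law of Section~\ref{sec:wave}, $\p_y\phi=\nu^{-1}\p_x\p_y\Delta^{-2}\theta+\nu^{-1}\p_y\p_x^{-1}\Delta^{-1}G$, carries an extra $\eta^{-3}$ factor, so in the regime $|\eta|\geq c^{-1}$ of Section~\ref{sec:G} non-adjacent and off-resonance interactions are perturbatively small and the whole analysis reduces to computing, on each resonant window $I_k=[\eta/(k+1/2),\eta/(k-1/2)]$, the amplification transferred from mode $k$ to mode $k-1$.

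For the \emph{stability} part I would introduce a Fourier multiplier $A(t,k,\eta)\approx\exp(\tfrac{C}{2}\sqrt[3]{|\eta-kt|})$ whose logarithmic jump $\log A(t,k-1,\eta)-\log A(t,k,\eta)$ on the window $I_k$ is calibrated to absorb precisely the single-echo gain derived in Section~\ref{sec:G}. An energy estimate on $A(\hat G,\hat\theta)$ then closes by Gronwall: the viscous term $\nu k^2|A\hat G|^2$ controls the ODE commutator errors and the $\eta^{-3}$ smallness of both off-resonance and non-nearest-neighbor contributions is absorbed by the margin between the initial weight exponent $C$ and the final one $C/2$. The calibration succeeds precisely because the product of the $\lfloor|\eta|\rfloor$ resonant gains is of order $\exp(C_1\sqrt[3]{|\eta|})$ for a computable universal $C_1\leq C/2$, which is what pins the Gevrey~$3$ exponent.

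For \emph{norm inflation} I would take initial data concentrated at a single Fourier mode $(n,\eta)$ with $n$ of order $\sqrt[3]{c\eta}$, so that all echoes in the chain $n\to n-1\to\cdots\to 1$ complete before viscous dissipation $\exp(-\nu k^2 t)$ kills the growing modes; the threshold $c\eta>\nu^{-3/2}$ is exactly what makes $\nu(\eta/k)^3\lesssim 1$ along the chain. Iterating the sharp lower bound on the single-echo gain and telescoping then yields $\|\theta(t)\|_{L^2}\geq\exp(\sqrt[3]{c\eta})$ as soon as $t\geq 2\eta$, after which no further resonances arise and the stability part of the theorem keeps the solution at this order for all later times.

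The \emph{blow-up} statement follows by superposition. Along a sparse (geometric) sequence $\eta_j\to\infty$ I take initial data $\widehat{\theta_0}=\sum_j a_j\chi_j$, where $\chi_j$ is a unit bump at $(n_j,\eta_j)$ and $a_j=|\eta_j|^{-s_0-1/2}\exp(-C_1\sqrt[3]{|\eta_j|})$, so that the initial data lies in the Gevrey~$3$ class with constant $C'$ for any $C'<2C_1\leq C$, yet each component is amplified along its echo chain to amplitude $\sim|\eta_j|^{-s_0-1/2}$ at time $t>2\eta_j$. Summing gives $\|\theta(t)\|_{H^s}^2\sim\sum_j|\eta_j|^{2(s-s_0)-1}$, which exhibits the desired phase transition at $s=s_0$; the velocity is recovered from $\theta$ and $G$ by Fourier multipliers of order $-2$, so its contribution at frequency $\eta_j$ is smaller by a factor $|\eta_j|^{-4}$ and $v$ still converges in $L^2$. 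The hardest step is exactly the calibration in the stability argument: finding a multiplier $A$ whose jump matches the nonlinear single-echo gain with the sharpness needed to close a single energy inequality against both the viscous dissipation and the $\eta^{-3}$ decoupling, with precisely the right constant $C_1$ to land squarely in Gevrey~$3$ and no larger or smaller regularity class.
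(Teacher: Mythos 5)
Your outline captures the general echo-chain heuristic, but it misses the two points where the paper has to work hardest, and at one of them your plan would actually fail. For the norm-inflation lower bound you propose to iterate a sharp single-echo gain along the whole chain $n\to n-1\to\cdots\to 1$ and you interpret the threshold $c\eta>\nu^{-3/2}$ as ensuring that "echoes complete before viscous dissipation kills the growing modes." This is not what the threshold does: the growing modes live in $\theta$, which has \emph{no} dissipation at all, so dissipation cannot kill them. The real obstruction to a lower bound is a possible cancellation inside the forcing $\nu\p_y\phi=\p_y\p_x\Delta_t^{-2}\theta+\p_y\p_x^{-1}\Delta_t^{-1}G$: at the resonant time the contribution of $G_k$ can in principle cancel that of $\theta_k$, so the per-interval gain $\sim c\eta\pi/k^3$ on $\theta_{k\pm1}$ need not be attained. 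The paper rules this out by using the (mixing-enhanced) dissipation of $G$ to make $G_k$ negligible, and this requires $\nu k^2\geq 4$ — which is exactly where $\sqrt[3]{c\eta}\gg\nu^{-1/2}$ comes from and why the lower-bound chain is stopped at $k_3\approx k_0/1000$ rather than telescoped to $k=1$; the remaining resonant intervals (from $k_3$ down to $1$, all before $t_0=2\eta$) are handled by a separate persistence argument (Lemma \ref{lemma:persist}) showing the already inflated mode is not destroyed. Your telescoping down to $k=1$ has no answer to the cancellation problem for small $k$ (where $\nu k^2$ is small), and you also skip the step of showing the single-mode data actually stays frequency-localized until the resonant regime (the paper's contraction-mapping argument of Section \ref{sec:contract} and the induction of Section \ref{sec:upper}); because of viscosity one cannot run time backwards to manufacture convenient data at $t_{k_0}$, so this localization must be proved.

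On the stability part your route is genuinely different from the paper's: you propose one global energy estimate with a calibrated Gevrey multiplier (in the spirit of Bedrossian--Masmoudi and of \cite{masmoudi2020stability}), whereas the paper works interval by interval — arctan-type Lyapunov multipliers in the stable and intermediate regimes, and in the resonant regime a mode-wise bootstrap in weighted $\ell^\infty$ (Propositions \ref{prop:res}, \ref{prop:nonres}) upgraded to $X$ by Schur's test, with the total growth obtained by multiplying the per-interval factors $2c\eta\pi/k^3$ and invoking Stirling. Your sketch as written hides the main difficulty: a plain weight $\exp(\tfrac{C}{2}|\eta-kt|^{1/3})$ does not absorb the resonant transfer by Gr\"onwall (at $t\approx\eta/k$ the receiving mode $k-1$ carries the \emph{larger} weight, so the weighted coefficient integrates to something huge), so the "calibration" must build the jumps from a toy-model solution, which is essentially a separate construction you have not supplied. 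Moreover, your plan to let the viscous term $\nu k^2|A\hat G|^2$ control the commutator errors is dangerous: the estimates must be uniform as $\nu\downarrow 0$ with $g=2c\nu$, and the paper deliberately avoids using dissipation anywhere in the upper bounds (it bounds the heat factors by $1$), using it only once, for the lower bound discussed above. The blow-up superposition is the same idea as the paper's, up to bookkeeping: the paper normalizes each frequency block by the actually attained norm $\psi(\eta)$ (since the gain is only known up to constants in the exponent) and needs the large-time convergence of each block; also with your amplitudes $|\eta_j|^{-s_0-1/2}$ on a geometric sequence the transition would sit at $s_0+\tfrac12$ rather than $s_0$, so the exponents need adjusting.
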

Let us comment on these results:
\begin{itemize}
\item The resonance mechanism here relies on the coupling between temperature
  and vorticity by means of $G$ (as we discuss in Section \ref{sec:wave} a
  similar unknown has previously been introduced in \cite{masmoudi2020stability}). In particular, while the equations for $\omega$
  exhibit very strong, mixing-enhanced dissipation, the norm of the temperature
  does not asymptotically decay and hence norm inflation persists.
\item We remark that in the Euler equations the critical Gevrey class is given
  by Gevrey $2$. The Boussinesq equations thus rely on a different resonance
  mechanism where it is not the interplay of vorticity and velocity resulting in
  resonances, but of the temperature and the velocity.
\item In \cite{masmoudi2020stability} stability of the nonlinear Boussinesq
  equations without thermal dissipation was established for Gevrey $3$ regular
  data. There a toy model suggests a norm inflation for frequency
  localized data, thus giving evidence of the necessity of Gevrey regularity.
  This article shows that this norm inflation indeed happens for \eqref{eq:linwave} and that chains of
  resonances are a feature of the linear equations around traveling waves.
  Furthermore, there is not only norm inflation but blow-up, yet for a critical
  class of data damping of the velocity still persists.
  This hence raises the question whether then in the nonlinear problem there
  exists a critical class and whether there, as in the linear dynamics around
  waves, damping may persist despite instability.
\item In \cite{bedrossian21} it is shown that the inviscid, nonlinear
  Boussinesq equations with stably stratified temperature exhibit damping of the
  velocity for small Gevrey regular initial data, but algebraic instability of
  the vorticity and of the gradient of the temperature.
\end{itemize}

As we discuss in Section \ref{sec:wave} this growth is driven by a chain of
resonances which happens on a time interval $(c^{-1/3}\eta^{2/3}, 2 \eta)$.
Here we used that the coefficient functions in \eqref{eq:linwave} in coordinates
$(x-ty,y)$ do not depend on $y$ anymore. Thus, in these coordinates the
evolution equations decouple with respect to $\eta$ and we may hence treat
$\eta$ as a given parameter.
Therefore in order to establish the Gevrey regularity bounds of Theorem
\ref{theorem:main}
we may consider $\eta$ to be arbitrary but fixed and show that for
$\omega(t,x)$, $\theta(t,x)$ the norms with respect to $x$ grow at most by a
factor $\exp(\sqrt[3]{c\eta})$ as time tends to infinity.
\subsection{Outline of the Article}
\label{sec:outline}
The remainder of the article is structured as follows:
\begin{itemize}
\item In Section \ref{sec:wave} we construct traveling wave solutions of the
  Boussinesq equations. Due to its system structure here waves propagate both in
  temperature and vorticity and the magnitude of both waves is coupled.
  We further show that for the viscous problem with $\alpha=0$, the wave in the
  vorticity decays as time tends to infinity while the wave in the temperature
  keeps it shape.
  In contrast, for the inviscid problem one encounters algebraic instability of
  the vorticity wave in agreement with the Miles-Howard criterion \cite{howard1961note}.
\item In Section \ref{sec:stability} we establish stability of the evolution in
  Gevrey $3$ regularity.
  In particular, we discuss the growth in different time regimes in different
  subsections.
  For instance in Section \ref{sec:small} we show that no large norm inflation
  may happen until a time of size about $c^{-1/3}\eta^{2/3}$ or after time $2\eta$.
  Hence, all resonances have to happen inside this time interval, where we
  distinguish between intermediate time intervals where $t\approx
  \frac{\eta}{k}$ and $\frac{c\eta}{k^3}$ is not yet large, treated in Section
  \ref{sec:intermediate}
  and the main resonance mechanism discussed in Section \ref{sec:G}.
  The proof of the stability result of Theorem \ref{theorem:main} is then given
  in Section \ref{sec:proofofmain}.
\item In Section \ref{sec:blow-up} we construct data exhibiting norm
  inflation and blow-up.
  Here we study the evolution of frequency-localized initial data throughout the
  various time regimes.
  In Subsection \ref{sec:contract} we use a contraction mapping approach in
  strongly weighted spaces to show that the solution remains localized until
  time about $c^{-1/3}\eta^{2/3}$.
  We then control the solution for intermediate times in Subsection
  \ref{sec:upper} by means of a bootstrap approach.
  Subsection \ref{sec:norminf} then forms the core of our norm inflation argument
  where we show that this data achieves the norm inflation estimated in Section
  \ref{sec:G}.
  Subsequently we show that this norm inflation of the temperature persists for all future times.

  Given these global in time solutions exhibiting norm inflation, we further
  construct data in critical Gevrey $3$ class which exhibit blow-up in Sobolev regularity.
\end{itemize}

\subsection{Notation}
\label{sec:notation}

In this article we consider the linearized Boussinesq equations around traveling
waves
\begin{align*}
   \omega&= -1 + f(t)\cos(k(x-ty)), \\
    \theta&= g \sin(k(x-ty)),
\end{align*}
where 
\begin{align*}
  g&= 2c\nu, \\
  f(t)&\leq \frac{4c}{1+t^2},
\end{align*}
and $0<c<0.001$ is a small constant.

Since these waves are stationary in coordinates $(x-ty,y)$ throughout this
article we work in these coordinates and note that (after some simplification)
the linear system \eqref{eq:simplewave} around these waves reads
  \begin{align*}
    \begin{split}
    \dt \omega &= (\frac{f(t)}{\nu}\cos(x)\p_y (\p_x^2+(\p_y-t\p_x)^2) ^{-1} \omega)_{\neq} + \nu (\p_x^2+(\p_y-t\p_x)^2) \omega + \p_x \theta, \\
    \dt \theta &= (\frac{g}{\nu} \cos(x) \p_y (\p_x^2+(\p_y-t\p_x)^2) ^{-1} \omega)_{\neq},\\
    (t,x,y) &\in (0,\infty)\times \T \times \R,
    \end{split}
  \end{align*}
where $()_{\neq}$ denotes the projection removing the $x$-average.  
  
Here it turns out to be advantageous to consider the \emph{good unknown} (see
Lemma \ref{lemma:goodunknown})
  \begin{align*}
    G= \nu \p_x \omega +\p_x^2 (\p_x^2+(\p_y-t\p_x)^2) ^{-1} \theta
  \end{align*}
  in place of the vorticity $\omega$ (see Section \ref{sec:wave} for a
  discussion), which leads to the system \eqref{eq:simplewave2}
\begin{align*}
      \dt \theta &= (\frac{g}{\nu} \cos(x) \p_y \Delta_t^{-1} (\p_x^{-1} G + \p_x \Delta_t^{-1} \theta))_{\neq} \\
    \dt G &= \nu \Delta_t G + (f \p_x \cos(x) \p_y \Delta_t^{-1} (\p_x^{-1} G + \p_x \Delta_t^{-1} \theta))_{\neq}  + 2 \p_x^3 (\p_y-t\p_x) \Delta_t^{-2} \theta + \p_x^2 \Delta_t^{-1} (\p_t \theta),
\end{align*}
where for brevity, we use the short notation
  \begin{align*}
    \Delta_t&:= \p_x^2 + (\p_y-t\p_x)^2.
  \end{align*}
We observe that none of the coefficient functions in this evolution equation
depend on $y$ explicitly.
Therefore the equations decouple with respect to the Fourier variable
\begin{align*}
  \eta \in \R
\end{align*}
with respect to $y$. We thus tend to consider $\eta$ as arbitrary
but fixed and study $\theta$ and $G$ as functions of $t$ and $x \in \T$ only.

Furthermore, we observe that also with respect to $x$ the only explicit
  coefficient functions are given by $\cos(x)$, which corresponds to a shift by
  $\pm 1$ in Fourier space.
  We may this equivalently consider this evolution equation as a nearest neighbor
  system for the Fourier modes
  \begin{align*}
    \theta_{k}, G_k,
  \end{align*}
  for $k \in \Z\setminus \{0\}$.
The corresponding ODE system and its integral formulation are given in Section \ref{sec:G}.

Some of our estimates consider the regime where $\eta$ is very large or use that
$c$ is very small.
Here, specifically our notation of very large or very small is given by a factor
$1000$ and we use
\begin{align*}
  a \ll b
\end{align*}
if 
\begin{align*}
  1000|a|< |b|.
\end{align*}
Similarly, we write
\begin{align*}
  a\gtrsim b
\end{align*}
if there exists a universal constant $C>\frac{1}{1000}$ such that
\begin{align*}
  |a| \geq C |b|.
\end{align*}

\section{Traveling Wave Solutions and a Good Unknown}
\label{sec:wave}

While the linearized Euler, Navier-Stokes or Boussinesq equations around an
affine velocity $v=(y,0)$ have been well-studied, the nonlinear problems have
proved much more challenging with very active research in recent years.
Similarly to the Vlasov-Poisson equations of plasma physics a main challenge in
this nonlinear analysis is given by nonlinear resonances called \emph{echoes},
which have also been experimentally observed \cite{yu2005fluid,malmberg1968plasma}.
We briefly recall the experiment for the Euler setting:
\begin{itemize}
\item 
At an initial time one introduces a first perturbation of the vorticity which looks like a mode
$e^{ikx}$. It is then mixed by the evolution and the perturbation of the
velocity field is damped.
\item At a later time one introduces a second perturbation of the vorticity which looks like a mode
$e^{ilx}$.
According to the linearized equations around the stationary state both
perturbations do not interact (the linearized equations decouple with respect to
frequency in $x$) and both perturbations are expected to be damped.
\item Yet, at a predictable later time one observes a peak of the velocity field at a
mode $e^{i(k+l)x}$. Both perturbations have interacted by means of the
nonlinearity to induce a correction at frequency $k+l$ in $x$ (and some
frequency in $y$), which later unmixes and yields a peak. The perturbations
result in an \emph{echo} at a later time.
\end{itemize}
This effect, or more accurately chains of echoes where one echo causes another
echo at a later time and so on, underlies the Gevrey regularity requirement of
the nonlinear analysis
\cite{bedrossian2016nonlinear,zillinger2020landau,dengZ2019,dengmasmoudi2018}.

Since the stationary states $\omega=-1, \theta=\alpha y$ of the Boussinesq equation are independent of $x$, the linearized problem
around them does not include this chain of resonances. Hence these linearized equations
exhibit radically different stability properties than the nonlinear problem.
We thus aim to find a different nearby solutions of the Boussinesq equations which
are $x$-dependent and whose linearized equations include this resonance
mechanism.
Such solutions are given by \emph{traveling waves}:
\begin{prop}[Traveling waves]
  \label{prop:waves}
  Let $\alpha \in \R, k \in\Z \setminus \{0\}$ and $\nu \geq 0$ be given.
  Then the tuple
\begin{align*}
  \begin{split}
  v&=
  \begin{pmatrix}
    y \\0
  \end{pmatrix} + \frac{f(t)}{k^2+k^2t^2}\sin(k(x-ty))\begin{pmatrix} kt\\ 1\end{pmatrix},\\
  \omega&= -1 + f(t)\cos(k(x-ty)),\\
  \theta&= \alpha y + g(t)\sin(k(x-ty)),
\end{split}
\end{align*}
solves the Boussinesq equations \eqref{eq:bous} if and only if $f$ and $g$ solve
the ODE system
\begin{align}
  \label{eq:ode}
  \begin{split}
  \dt f &= -\nu(k^2+k^2 t^{2}) f - k g, \\
  \dt g &= \frac{\alpha}{k^2 + k^2t^2} f.
\end{split}
\end{align}
We call such solutions \emph{traveling waves}.
\end{prop}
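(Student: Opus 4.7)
The plan is to verify the proposition by direct substitution of the ansatz into the vorticity formulation of \eqref{eq:bous}, exploiting a single algebraic observation that makes the nonlinear self-interaction of the wave collapse to zero. Set $\phi := k(x-ty)$, so that $\p_x \phi = k$ and $\p_y \phi = -kt$; every function in the ansatz depends on $(x,y)$ only through $\phi$ (and $y$ linearly), which makes all derivatives elementary. First I would check that the wave part of $v$ is the Biot--Savart reconstruction of the wave part of $\omega$: since $|(k,-kt)|^2 = k^2(1+t^2)$, the stream function $\psi = \tfrac{f(t)}{k^2(1+t^2)}\cos(\phi)$ satisfies $-\Delta \psi = f(t)\cos(\phi)$, and the gradient identity $v_{\mathrm{wave}} = (\p_y\psi,-\p_x\psi)$ simultaneously gives $\nabla\cdot v_{\mathrm{wave}}=0$ and the stated vector.

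The central observation is that $v_{\mathrm{wave}}$ is proportional to $(t,1)$ (up to a factor of $k$), which is exactly the direction tangent to the level sets of $\phi$. Hence
\begin{equation*}
v_{\mathrm{wave}}\cdot\nabla F(\phi) \;\propto\; (t,1)\cdot(k,-kt)\,F'(\phi) \;=\; 0
\end{equation*}
for any smooth $F$. Consequently the wave self-interactions $v_{\mathrm{wave}}\cdot\nabla(f(t)\cos\phi)$ and $v_{\mathrm{wave}}\cdot\nabla(g(t)\sin\phi)$ vanish identically, and the nonlinear terms reduce to interactions of $v_{\mathrm{wave}}$ with the background shear and the linear $\alpha y$ part of $\theta$.

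For the temperature equation $\dt\theta + v\cdot\nabla\theta = 0$ I would then collect the remaining contributions: the time derivative produces $g'(t)\sin\phi - kyg(t)\cos\phi$; the shear term $y\p_x\theta$ gives $kyg(t)\cos\phi$, which cancels the $y\cos\phi$ piece; and the only surviving coupling is $(v_{\mathrm{wave}})_2\cdot\p_y(\alpha y)$. Collecting the coefficients of $\sin\phi$ yields the second ODE in \eqref{eq:ode}. For the vorticity equation $\dt\omega + v\cdot\nabla\omega = \nu\Delta\omega + \p_x\theta$ the argument is analogous: the time derivative and the shear cancel the $y\sin\phi$ terms, the viscous term uses $\Delta\cos\phi = -k^2(1+t^2)\cos\phi$, and the buoyancy forcing contributes $kg(t)\cos\phi$. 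Equating coefficients of $\cos\phi$ produces the first ODE. The converse direction is obtained by reading these calculations backwards.

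There is no hard step: the proof is purely algebraic bookkeeping once the parallelism observation is in place. The only thing to be careful with is verifying that, after the obvious cancellations, no $y$-dependent terms remain, so that the evolution really collapses to an autonomous ODE for $(f,g)$ rather than only balancing to leading order. I would present this as a single ``substitute and collect'' computation, with the tangency identity displayed prominently because it is exactly what distinguishes traveling waves of this specific form from more general Ansätze that would not reduce to a finite-dimensional system.
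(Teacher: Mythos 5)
Your proposal is correct and follows essentially the same route as the paper's proof: verify that the wave part of $v$ is the stream-function/Biot--Savart reconstruction of $f(t)\cos(k(x-ty))$ (using that the phase function is an eigenfunction of the Laplacian), observe that the shear transport cancels the explicit time dependence of the phase and that the wave self-interaction vanishes because $v_{\mathrm{wave}}$ is tangent to the level sets of the phase, and then match the $\cos$ and $\sin$ coefficients to obtain \eqref{eq:ode}. The only wrinkles are sign/normalization conventions (your bookkeeping yields the system with $g$ replaced by $-g$, and the second component of the wave velocity is $k$ rather than $1$ for $k\neq 1$), but these originate in the statement itself and are present in the paper's own computation, so they do not constitute a gap in your argument.
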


We stress that these traveling wave solutions exist both in the inviscid and
viscous problem. However, due to the dissipation, the asymptotic behavior as
$t\rightarrow \infty$ is very different in both cases, as we discuss in Lemma \ref{lemma:boundf}.

\begin{proof}[Proof of Proposition \ref{prop:waves}]
  We make the ansatz that
  \begin{align*}
    \omega= -1 + f(t)\cos(k(x-ty)), \\
    \theta= \alpha y + g(t) \sin(k(x-ty)),
  \end{align*}
  with $f(t)$ and $g(t)$ to be determined.

  Then since $\cos(k(x-ty))$ is an eigenfunction of the Laplacian, the stream function $\phi:=(-\Delta)^{-1}\omega$ is given by
  \begin{align*}
    \phi= -\frac{y^2}{2} + \frac{1}{k^2+ k^2 t^2} \cos(k(x-ty))
  \end{align*}
  and $v= \nabla^{\perp}\phi$ is of the claimed form.
  In particular, we note that
  \begin{align*}
    (\p_t + y\p_x) f(t) \cos(k(x-ty)) &= \dot{f}(t)\cos(k(x-ty)), \\
    (\p_t + y\p_x) g(t) \sin(k(x-ty)) &= \dot{g}(t)\sin(k(x-ty))
  \end{align*}
  and, since $\sin(x)=\pm \sqrt{1-\cos^2(x)}$ can locally be expressed as a
  function of $\sin(x)$ it also holds that
  \begin{align*}
    (\nabla^{\perp} \frac{1}{k^2+ k^2 t^2} \cos(k(x-ty))) \cdot \nabla f(t)\cos(k(x-ty)) &=0,\\
    (\nabla^{\perp} \frac{1}{k^2+ k^2 t^2} \cos(k(x-ty))) \cdot \nabla g(t)\sin(k(x-ty)) &=0.
  \end{align*}
  Therefore, plugging in this ansatz the Boussinesq equations read
  \begin{align*}
    \dot{f}(t) \cos(k(x-ty))&= -\nu (k^2-k^2t^2) f(t)\cos(k(x-ty)) - g(t)k \cos(k(x-ty)), \\
    \dot{g}(t) \sin(k(x-ty)) & = \frac{\alpha}{k^2+k^2 t^2} f(t) \sin(k(x-ty)).
  \end{align*}
  Since these equations are supposed to hold for all $x$, we may assume that
  $\sin(k(x-ty))\neq 0 \neq \cos(k(x-ty))$ and hence obtain the claimed system
  of ordinary differential equations.
\end{proof}

In this article we will focus on the case $k=1$ and thus traveling waves at the
lowest non-trivial frequency.
Moreover, we restrict to considering the case $\alpha=0$ for which $g(t)$ is
constant in time and thus allows for an explicit characterization of the evolution.

\begin{lemma}[Bounds on $f$ and $g$]
\label{lemma:boundf}
  Let $\alpha=0, \nu>0$, $k=1$ and let $f(t), g(t)$ be the solution of \eqref{eq:ode}
  with initial data $f_0, g_0$.
  Then for all $t>0$ it holds that
  \begin{align*}
    |f(t)- \exp(-\nu (t+t^3/3))f_0| &\leq \frac{4}{\nu (1+t^2)} |g_0|, \\
    g(t)&=g_0.
  \end{align*}

  Let instead $\nu=0$ and $\alpha \in \R$ and define
  \begin{align*}
    \gamma= \Re \sqrt{\frac{1}{4}-\alpha},
  \end{align*}
  then $f(t),g(t)$ can be explicitly computed in terms of hypergeometric
  functions and there exist solutions for which $|f(t)|\sim
  t^{\frac{1}{2}+\gamma}$ and $|g(t)|\sim t^{-1/2+\gamma}$.  
\end{lemma}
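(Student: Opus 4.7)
\emph{Viscous case.} With $\alpha=0$ and $k=1$, the second equation of \eqref{eq:ode} reduces to $\dot g=0$, so $g(t)=g_0$, and the first becomes the scalar linear ODE $\dot f + \nu(1+t^2) f = -g_0$. Using the integrating factor $\exp(\nu(t+t^3/3))$, Duhamel gives
\[f(t) - e^{-\nu(t+t^3/3)} f_0 = -g_0 \int_0^t \exp\!\left(-\nu \int_s^t (1+u^2)\,du\right) ds,\]
so the claim reduces to the estimate $I(t) := \int_0^t \exp\bigl(-\nu\int_s^t (1+u^2)\,du\bigr)\,ds \le 4/[\nu(1+t^2)]$.

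I would prove this by splitting at $s=t/2$. For $s\in[t/2,t]$ the monotonicity bound $\int_s^t(1+u^2)\,du \ge (t-s)(1+t^2/4)$ yields a contribution at most $1/[\nu(1+t^2/4)] \le 4/[\nu(1+t^2)]$. For $s\in[0,t/2]$ the stronger inequality $\int_s^t(1+u^2)\,du \ge \int_{t/2}^t u^2\,du = 7t^3/24$ produces an exponentially small contribution $(t/2)\exp(-7\nu t^3/24)$; applying the elementary maximum $z\,e^{-7z/24} \le 24/(7e)$ with $z=\nu t^3$ shows this is absorbed by a small multiple of $1/[\nu(1+t^2)]$ once $t\ge 1$, while the crude bound $I \le 1/\nu \le 4/[\nu(1+t^2)]$ handles $t\le 1$. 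Combining the two regimes yields the factor $4$ of the statement.

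\emph{Inviscid case.} For $\nu=0$ I would eliminate $g = -\dot f$ from the system to obtain $\ddot f + \alpha f/(1+t^2) = 0$, and change variables via $z=-t^2$ (so $f(t)=F(z)$, $\dot f = -2tF'$, $\ddot f = -2F' - 4zF''$) to land on the standard hypergeometric equation
\[z(1-z)F''(z) + \left(\tfrac12 - \tfrac12 z\right) F'(z) - \tfrac{\alpha}{4} F(z) = 0,\]
with Riemann parameters $c=1/2$, $a+b=-1/2$, $ab=\alpha/4$, so $\{a,b\} = \{-1/4 \pm \tfrac12 \sqrt{1/4-\alpha}\}$; a fundamental system is therefore expressible through ${}_2F_1$. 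The standard connection formula for ${}_2F_1$ at its singularity at infinity shows that two linearly independent solutions behave like $|z|^{-a}$ and $|z|^{-b}$, i.e.\ like $t^{1/2\pm\gamma}$ with $\gamma = \Re\sqrt{1/4-\alpha}$. Choosing initial data that excite the growing branch (a codimension-one condition, controlled by invertibility of the connection matrix at $z=0$) gives $|f(t)| \sim t^{1/2+\gamma}$, and then $|g| = |\dot f| \sim t^{-1/2+\gamma}$ by differentiation. The main technical obstacles are the sharp bookkeeping of constants in the viscous Duhamel estimate, and the degenerate case $\alpha = 1/4$, where the repeated hypergeometric indices produce a $\log t$ correction but leave the power $t^{1/2}$ unchanged.
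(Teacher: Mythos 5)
Your overall route is the paper's: for $\nu>0$ reduce to $\dot g=0$ and Duhamel for $f$, split the Duhamel integral at $s=t/2$; for $\nu=0$ reduce to $\ddot f+\tfrac{\alpha}{1+t^2}f=0$ and use the hypergeometric representation (your substitution $z=-t^2$ and the exponent computation giving $t^{1/2\pm\gamma}$, with $g=-\dot f$, is exactly what the paper invokes, and is fine).

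There is, however, a genuine quantitative gap in the viscous estimate: your two bounds do not combine to the stated constant $4$. Your $[t/2,t]$ piece is bounded by $\frac{1}{\nu(1+t^2/4)}=\frac{4}{\nu(4+t^2)}$, which already almost exhausts the target $\frac{4}{\nu(1+t^2)}$: the remaining room is $\frac{12}{\nu(1+t^2)(4+t^2)}=O\bigl(\frac{1}{\nu t^4}\bigr)$. But your $[0,t/2]$ bound $\frac t2 e^{-7\nu t^3/24}\le \frac{12}{7e}\,\frac{1}{\nu t^2}$ is \emph{not} negligible in the regime $\nu t^3=O(1)$, $t\gg1$ (small viscosity): it is of order $\frac{1}{\nu t^2}$, which overwhelms the $O(\frac{1}{\nu t^4})$ slack. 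Concretely, at $t=10$, $\nu=\tfrac{24}{7000}$ your two bounds sum to about $11.2+1.8\approx 13.1$, while $\frac{4}{\nu(1+t^2)}\approx 11.6$; so the final sentence ``combining the two regimes yields the factor $4$'' is not justified by the estimates you wrote (they yield a somewhat larger constant, roughly $4.7$, which would suffice for every later use of the lemma but does not prove it as stated). The fix is to keep the $s$-dependence of the phase on $[0,t/2]$, as the paper does: there $t-s\ge s$ and $\tfrac{t^3-s^3}{3}\ge \tfrac{t^2}{2}s$, so that piece is bounded by $\int_0^{t/2}e^{-\nu s(1+t^2/2)}ds\le\frac{1}{\nu(1+t^2/2)}$, and to sharpen the $[t/2,t]$ piece (the paper integrates by parts against $\partial_s e^{-\nu\int_s^t(1+u^2)du}=\nu(1+s^2)e^{-\nu\int_s^t(1+u^2)du}$) rather than freezing $1+u^2$ at $1+t^2/4$. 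Your small-$t$ crude bound $I\le 1/\nu$ is fine; the problem is only the large-$t$, small-$\nu$ regime.
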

In particular, we note that if $f_0=0$ and $g=c\nu>0$ it follows that
\begin{align}
  \label{eq:fsmall}
  \begin{split}
  \frac{g}{\nu}&= c, \\
  f(t)&\leq \frac{3c}{1+t^2},
\end{split}
\end{align}
for all times $t>0$.

In the inviscid case, we instead observe that for $\alpha\geq \frac{1}{4}$,
$\gamma$ vanishes and we hence observe growth and decay with rates $t^{\pm
  \frac{1}{2}}$.
We remark that here another common notational convention is to normalize $\alpha$ instead of
gravity, so that
\begin{align*}
  \dt f(t)&= - \beta^2 g, \\
  \dt g(t)&= \frac{1}{1+t^2} f(t),
\end{align*}
where $\beta^2$ is the Richardson number. In this convention $\gamma$ vanishes
if $\beta^2\geq \frac{1}{4}$, which agrees with the Miles-Howard criterion \cite{howard1961note}.
Since our focus in this article is on the viscous problem, we do not pursue this
further.

\begin{proof}
  We observe that $\dt g=0$ and thus $g(t)=g_0$ for all times.
  Furthermore, since $f$ solves
  \begin{align*}
    \dt f = - \nu (1+t^2) f - g_0
  \end{align*}
  it follows that
  \begin{align}
    \label{eq:evoft}
    f(t)= \exp(-\nu (t+t^3/3)) f_0 + \int_0^t \exp(-\nu (t-s + t^{3}/3 -s^3/3)) ds g_0.
  \end{align}
  It thus only remains to bound the integral
  \begin{align*}
    \int_0^t \exp(-\nu (t-s + t^{3}/3 -s^3/3)) ds &= \int_{0}^{t/2}\exp(-\nu (t-s + t^{3}/3 -s^3/3)) ds \\
    & \quad + \int_{t/2}^{t}\exp(-\nu (t-s + t^{3}/3 -s^3/3)) ds.
  \end{align*}
  On the interval $(0,t/2)$ we may control
  \begin{align*}
    t-s \geq t/2 &\geq s, \\
    t^3/3 - s^3/3 \geq t^3/3 - t^3/24 \geq t^3/4 &\geq \frac{t^2}{2} s.
  \end{align*}
  Thus this integral can be bounded by
  \begin{align*}
    & \quad \int_{0}^{t/2}\exp(-\nu s - \nu \frac{t^2}{2} s) ds = \frac{1}{\nu (1+ \frac{t^2}{2})} (1- \exp(-\nu t/2 - \nu t^3/8)) \\
    &\leq \frac{1}{\nu (1+ \frac{t^2}{2})}.
  \end{align*}
  On the interval $(t/2,t)$ we may integrate by parts to obtain
  \begin{align*}
    & \quad \int_{t/2}^{t}\frac{1}{\nu (1+s^2)} \p_s \exp(-\nu (t-s + t^{3}/3 -s^3/3)) ds \\
    &= \frac{1}{\nu (1+s^2)} \exp(-\nu (t-s + t^{3}/3 -s^3/3))|_{s=t/2}^t\\
    & \quad     - \int_{t/2}^t \exp(-\nu (t-s + t^{3}/3 -s^3/3)) \p_s \frac{1}{\nu (1+s^2)} ds \\
    &\leq \frac{1}{\nu (1+\frac{t^2}{2})},
  \end{align*}
  where we estimated $0\leq \exp(-\nu (t-s + t^{3}/3 -s^3/3)) \leq 1$.

  Combining the estimates for both intervals we deduce that
  \begin{align*}
    |f(t)- \exp(-\nu (t+t^3/3)) f_0| \leq \frac{2}{\nu(1+\frac{t^2}{2})} |g_0| \leq \frac{4}{\nu (1+t^2)}|g_0|, 
  \end{align*}
  which concludes the proof of this case.

  We next turn to the case $\nu=0$, $\alpha \in \R$, for which the equations
  read
  \begin{align*}
    \dt f &= -g, \\
    \dt g &= \frac{\alpha}{1+t^2} f.
  \end{align*}
  It then follows that
  \begin{align*}
    \dt^2 f = - \frac{\alpha}{1+t^2} f.
  \end{align*}
  Thus $f(t)$ is explicitly given in terms of hypergeometric functions of the
  second kind:
  \begin{align*}
    f(t)&= c_1 F^1_2 (-\frac{1}{4}- \frac{1}{4}\sqrt{1-4\alpha}, -\frac{1}{4}+\frac{1}{4}\sqrt{1-4\alpha}, 1/2, -t^2)\\
    &\quad + c_2 F^1_2 (\frac{1}{4}- \frac{1}{4}\sqrt{1-4\alpha}, \frac{1}{4}+\frac{1}{4}\sqrt{1-4\alpha}, 1/2, -t^2)
  \end{align*}
  and
  \begin{align*}
    g = -\dt f.
  \end{align*}
  In particular, for $t$ large these hypergeometric functions behave as
  \begin{align*}
    t^{\frac{1}{2}- \frac{1}{2}\sqrt{1-4\alpha}}, \ t^{\frac{1}{2} +\frac{1}{2}\sqrt{1-4\alpha}}, 
  \end{align*}
  which concludes the proof.
\end{proof}
In the following we discuss the linearized equations around a given traveling wave.

\begin{lemma}
  \label{lem:linearizationaroundwave}
  Consider a traveling wave solution
  \begin{align*}
    \omega_{\star}&= -1 + f(t)\cos(k(x-ty)), \\
    \theta_{\star}&= \alpha y + g(t) \sin(k(x-ty)),
  \end{align*}
  as in Proposition \ref{prop:waves}. Then the linearized equations for the
  perturbations $\omega, \theta$ around this
  wave are given by the system:
\begin{align*}
  \begin{split}
  \dt \omega + y \p_x \omega + \frac{f(t)}{1+t^2}\sin(x-ty) (\p_y-t\p_x) \omega + v \cdot \nabla f(t)\cos(x-ty) = \nu \Delta \omega + \p_x \theta, \\
  \dt \theta + y \p_x \theta + \frac{f(t)}{1+t^2}\sin(x-ty) (\p_y-t\p_x) \theta + v \cdot \nabla g(t)\cos(x-ty)  + v_2 \alpha =0.
  \end{split}
\end{align*}
\end{lemma}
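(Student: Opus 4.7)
The plan is to carry out a direct linearization: write the full solution as background plus perturbation, substitute into the nonlinear system, use that the background solves the nonlinear equations, and discard the quadratic-in-perturbation terms. Concretely, set $\omega_{\text{tot}}=\omega_\star+\omega$, $\theta_{\text{tot}}=\theta_\star+\theta$, with associated velocities $v_{\text{tot}}=v_\star+v$ via the Biot--Savart law (so $v$ is divergence-free and recovers the perturbation $\omega$). Substituting into
\begin{align*}
    \dt\omega_{\text{tot}}+v_{\text{tot}}\cdot\nabla\omega_{\text{tot}} &= \nu\Delta\omega_{\text{tot}}+\p_x\theta_{\text{tot}},\\
    \dt\theta_{\text{tot}}+v_{\text{tot}}\cdot\nabla\theta_{\text{tot}} &= 0,
\end{align*}
and invoking Proposition \ref{prop:waves} to cancel the pure background contribution, one is left with
\begin{align*}
    \dt\omega+v_\star\cdot\nabla\omega+v\cdot\nabla\omega_\star &=\nu\Delta\omega+\p_x\theta+(\text{quadratic}),\\
    \dt\theta+v_\star\cdot\nabla\theta+v\cdot\nabla\theta_\star &=(\text{quadratic}),
\end{align*}
where the quadratic terms $v\cdot\nabla\omega$ and $v\cdot\nabla\theta$ are dropped by definition of the linearization.

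The only nontrivial bookkeeping is rewriting $v_\star\cdot\nabla$ in the form appearing in the lemma. Splitting $v_\star=(y,0)+\tilde v_\star$ with $\tilde v_\star=\frac{f(t)}{k^2(1+t^2)}\sin(k(x-ty))(kt,1)^T$, the affine part contributes the transport operator $y\p_x$, and a short calculation identifies
\begin{align*}
    \tilde v_\star\cdot\nabla = \frac{f(t)}{k^2(1+t^2)}\sin(k(x-ty))\,(\p_y-t\p_x),
\end{align*}
which, after specializing to $k=1$, matches the coefficient $\frac{f(t)}{1+t^2}\sin(x-ty)(\p_y-t\p_x)$ in the statement. (Consistency check: applied to $\omega_\star$ this operator picks out the component of $\tilde v_\star$ along $\nabla\omega_\star$; since $\tilde v_\star\parallel\nabla^\perp\omega_\star$ it must annihilate $\omega_\star$, which is precisely the fact used in Proposition \ref{prop:waves} to reduce the background equations to the ODE system \eqref{eq:ode}.)

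For the temperature equation it remains to unpack $v\cdot\nabla\theta_\star$. Writing $\theta_\star=\alpha y+g(t)\sin(k(x-ty))$ and using $\nabla\theta_\star=\alpha e_2+g(t)\nabla\sin(k(x-ty))$ isolates the shear-stratification contribution $v_2\alpha$ from the wave contribution $v\cdot\nabla(g(t)\sin(k(x-ty)))$, matching the last two terms in the second equation of the lemma. The analogous step for the vorticity equation yields $v\cdot\nabla f(t)\cos(x-ty)$, as claimed.

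I do not expect any genuine obstacle here: the proof is essentially a careful bookkeeping exercise. The one point requiring attention is keeping sign conventions for the Biot--Savart law consistent with those used in Proposition \ref{prop:waves} so that $v_\star$ derived from $\omega_\star$ agrees with the explicit expression given there; once this is fixed, the identification of $\tilde v_\star\cdot\nabla$ and the cancellation of the background terms follow immediately.
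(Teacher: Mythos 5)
Your proof takes essentially the same route as the paper: the paper's entire argument is the one-line observation that one substitutes background plus perturbation, uses Proposition \ref{prop:waves} to cancel the background, and omits the quadratic terms $v\cdot\nabla\omega$ and $v\cdot\nabla\theta$; you simply spell out this bookkeeping, and the decomposition of $v_\star\cdot\nabla$ and of $v\cdot\nabla\theta_\star$ into $v_2\alpha$ plus the wave term is exactly what is needed.

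One detail deserves attention: your claimed identity $\tilde v_\star\cdot\nabla=\frac{f(t)}{k^2(1+t^2)}\sin(k(x-ty))\,(\p_y-t\p_x)$ does not follow from your own expression $\tilde v_\star=\frac{f(t)}{k^2(1+t^2)}\sin(k(x-ty))(kt,1)^T$, since $(kt,1)\cdot(\p_x,\p_y)=kt\,\p_x+\p_y$, not $\p_y-t\p_x$. Your consistency check in fact detects this: the operator $t\p_x+\p_y$ (with that prefactor) annihilates $\omega_\star$ because $\tilde v_\star\parallel\nabla^{\perp}\omega_\star$, whereas $(\p_y-t\p_x)\cos(x-ty)=2t\sin(x-ty)\neq 0$. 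The sign as you wrote it matches the lemma as printed (and the term is discarded in the simplified system \eqref{eq:linwave} anyway, so nothing downstream is affected), but as a derivation the correct shear term produced by the linearization is $\frac{f(t)}{1+t^2}\sin(x-ty)(t\p_x+\p_y)$ for $k=1$; you should either carry this form or note explicitly that you are adopting the paper's sign convention.
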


\begin{proof}[Proof of Lemma \ref{lem:linearizationaroundwave}]
  In the linearization we omit the quadratic nonlinearities $v \cdot \nabla
  \omega$ and $v \cdot \nabla \theta$.
\end{proof}

With these preparations, we can sketch the echo mechanism for the linearized
equations around a traveling wave:
\begin{enumerate}
\item At the initial time we introduce a perturbation $e^{ik_0 x} e^{i\eta
    y}$ to the temperature $\theta$.
  This perturbation roughly evolves by transport for a length of time.
\item At around the critical time $\frac{\eta}{k_0}$ the buoyancy term
  $\p_x\theta$ in the evolution equation for $\omega$ will also cause the mode
  $e^{ik_0 x} e^{i\eta y}$ of $\omega$ to grow.
\item The mode $e^{ik_0 x} e^{i \eta y}$ of the vorticity leads to a peak of the
  velocity at that mode at the resonant time. This perturbation \emph{interacts with
  the underlying wave} in $\theta$ at frequencies $\pm 1$ to yield a
contribution to the temperature at frequencies $e^{i (k_0 \pm 1) x} e^{i \eta y}$.
\item We repeat the cycle with $k_0-1$.  
\end{enumerate}
We remark that the \emph{underlying wave} here determines the \emph{step size} of the chain,
$k_0 \leadsto k_0 -1  \leadsto k_0-2 \leadsto \dots \leadsto 1$ and hence choosing the wave
with the lowest frequency, $k=1$, yields the longest chains. At each resonant time our
linear perturbation picks up some energy from the underlying wave and moves to a
lower frequency. It stops once it has reached the lowest frequency $1$.

As noted following Proposition \ref{prop:waves} throughout this article we thus make two simplifications:
\begin{itemize}
\item We only study the case $\alpha=0$, so $g$ is constant.
  We further assume that
  $g=c\nu$ for a small constant $c>0$ and $f_0=0$. In particular, as noted in
  \eqref{eq:fsmall} this implies that $f(t)\leq \frac{c}{1+t^2}$ is also small
  and decreasing in time.
\item We omit the advection term $\frac{f(t)}{1+t^2}\sin(x-ty) (\p_y-t\p_x)$, which is a shear by
  $(0,\frac{1}{1+t^2}\sin(x))$ in coordinates $(x-ty,y)$ and fix the $x$ average
  of both $\theta$ and $\omega$ as zero.
  Since $\frac{1}{1+t^2}$ is quickly decaying and integrable in time, we
  expect that it is possible to remove this simplification.
  Similarly, we expect that changes to the $x$-average can be controlled by
  using the dissipation and mixing effects.
  However, the resulting change of variables and the associated modification of
  integro-differential operators makes this problem technically very challenging
  and is hence omitted in this article.
  This has the additional benefit that after a Fourier transform in $x$ the evolution
  for frequencies $k>0$ and $k<0$ decouple.
\end{itemize}
For later reference we formulate this problem in coordinates $(x-ty,y)$ moving
  with the shear as a definition.
\begin{defi}
  \label{defi:waveequ}
  Let $0< g < \nu$ be a given constant and formally set $f\equiv 0$.
  Then the \emph{wave perturbation equations} in coordinates $(x-ty,y)$ moving
  with the shear are given by
  \begin{align}
    \label{eq:simplewave}
    \begin{split}
    \dt \omega &= (\frac{f(t)}{\nu}\cos(x)\p_y (\p_x^2+(\p_y-t\p_x)^2) ^{-1} \omega)_{\neq} + \nu (\p_x^2+(\p_y-t\p_x)^2) \omega + \p_x \theta, \\
    \dt \theta &= (\frac{g}{\nu} \cos(x) \p_y (\p_x^2+(\p_y-t\p_x)^2) ^{-1} \omega)_{\neq},\\
    (t,x,y) &\in (0,\infty)\times \T \times \R,
    \end{split}
  \end{align}
  where $()_{\neq}$ projects out the $x$-average. 
  We also introduce the notation $\Delta_t=\p_x^2+(\p_y-t\p_x)^2$.
\end{defi}
Similarly to \cite{masmoudi2020stability} it turns out to be advantageous to
equivalently reformulate this system in terms of another unknown.
\begin{lemma}[Good unknown]
  \label{lemma:goodunknown}
  Let $\omega, \theta$ be given functions and $\nu>0$ be given.
  We further define the \emph{good unknown}
  \begin{align*}
    G= \nu \p_x \omega +\p_x^2 \Delta_t^{-1} \theta.
  \end{align*}
  Then $(\omega, \theta)$ is a solution of \eqref{eq:simplewave} if and only if
  $(G, \theta)$ solve
  \begin{align}
    \label{eq:simplewave2}
    \begin{split}
    \dt \theta &= (\frac{g}{\nu} \cos(x) \p_y \Delta_t^{-1} (\p_x^{-1} G + \p_x \Delta_t^{-1} \theta))_{\neq} \\
    \dt G &= \nu \Delta_t G + (f\p_x  \cos(x) \p_y \Delta_t^{-1} (\p_x^{-1}G + \p_x \Delta_t^{-1} \theta))_{\neq}  \\
    &\quad + 2 \p_x^2 (\p_y-t\p_x) \Delta_t^{-2} \theta + \p_x \Delta_t^{-1} (\p_t \theta).
    \end{split}
  \end{align}
  Here we use the short notation $\Delta_t=\p_x^2 + (\p_y-t\p_x)^2$.
\end{lemma}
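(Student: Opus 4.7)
The plan is to derive the two equations in \eqref{eq:simplewave2} directly from \eqref{eq:simplewave} by substituting the definition $G = \nu \p_x \omega + \p_x^2 \Delta_t^{-1}\theta$ and tracking the resulting algebra. A key observation that simplifies the computation is that, apart from the explicit coefficient $\cos(x)$, every operator appearing in \eqref{eq:simplewave} (namely $\p_x$, $\p_y$, $\Delta_t$, and $\dt \Delta_t$) is a Fourier multiplier in $(x,y)$, so all of them commute with one another and the only time-derivative subtlety comes from $\Delta_t$ depending on $t$.

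For the equation on $\theta$, I would solve the defining relation for $\omega$,
\begin{equation*}
  \omega = \nu^{-1}\bigl(\p_x^{-1} G - \p_x \Delta_t^{-1} \theta\bigr),
\end{equation*}
and insert this into the right-hand side of the $\theta$ equation of \eqref{eq:simplewave}. Since that right-hand side depends on $\omega$ only through $\frac{g}{\nu}\cos(x)\p_y \Delta_t^{-1}\omega$, the substitution is direct and (up to the sign/convention recorded in the statement) immediately reproduces the first line of \eqref{eq:simplewave2}.

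For the equation on $G$, I would differentiate the defining relation in time to obtain
\begin{equation*}
  \dt G = \nu \p_x \dt \omega + \p_x^2 \Delta_t^{-1} \dt \theta + \p_x^2 \bigl(\dt \Delta_t^{-1}\bigr)\theta,
\end{equation*}
and use $\dt \Delta_t = -2\p_x(\p_y - t\p_x)$, together with $\dt \Delta_t^{-1} = -\Delta_t^{-1}(\dt \Delta_t)\Delta_t^{-1}$, to identify the commutator term with $2\p_x^3(\p_y - t\p_x)\Delta_t^{-2}\theta$. The term $\p_x^2 \Delta_t^{-1}\dt\theta$ is kept as is. The remaining work is to insert the evolution of $\omega$ from \eqref{eq:simplewave} into $\nu \p_x \dt \omega$. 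Here the crucial point, and the whole motivation for the choice of $G$, is the cancellation: writing $\nu \p_x (\nu \Delta_t \omega)$ via $\omega = \nu^{-1}(\p_x^{-1} G - \p_x \Delta_t^{-1}\theta)$ produces $\nu \Delta_t G - \nu \p_x^2 \theta$, and this $-\nu \p_x^2 \theta$ is exactly cancelled by the contribution $\nu \p_x(\p_x \theta) = \nu \p_x^2 \theta$ coming from the buoyancy term. Thus the buoyancy forcing, which is the most dangerous coupling in \eqref{eq:simplewave}, is absorbed into the pure dissipation $\nu \Delta_t G$ in the new variable. The shear term $\nu \p_x (\frac{f(t)}{\nu}\cos(x)\p_y \Delta_t^{-1}\omega)_{\neq}$ becomes $(f(t)\p_x \cos(x)\p_y \Delta_t^{-1}\omega)_{\neq}$ after the $\nu$'s cancel, and inserting $\omega$ in terms of $G,\theta$ yields the corresponding term in \eqref{eq:simplewave2}.

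The proof is essentially bookkeeping and does not require any estimate; the only potential pitfall is careful tracking of signs, powers of $\p_x$, and factors of $\nu$ when commuting the Fourier multipliers. The equivalence in the other direction is automatic, since the map $(\omega, \theta) \mapsto (G, \theta)$ is invertible: given $(G, \theta)$ one recovers $\omega$ via the formula above, and differentiating the resulting expression in $t$ using the system \eqref{eq:simplewave2} reproduces the $\omega$ equation of \eqref{eq:simplewave} by the same cancellation read backwards.
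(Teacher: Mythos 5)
Your proposal is correct and follows exactly the paper's route: the paper's own proof is literally a one-line ``direct calculation'', plus the remark that since the $x$-averages of $\omega$ and $\theta$ are fixed to vanish no information is lost in passing to $\p_x\omega$ and applying $\p_x^{-1}$ — the one point you use implicitly (when recovering $\omega=\nu^{-1}(\p_x^{-1}G-\p_x\Delta_t^{-1}\theta)$) but do not state. Your bookkeeping of the essential steps is right — the cancellation $\nu\p_x(\nu\Delta_t\omega)+\nu\p_x(\p_x\theta)=\nu\Delta_t G$ and the commutator $\dt\Delta_t^{-1}=2\p_x(\p_y-t\p_x)\Delta_t^{-2}$ — and it reproduces the form recorded in Section \ref{sec:notation} (with $2\p_x^3(\p_y-t\p_x)\Delta_t^{-2}\theta+\p_x^2\Delta_t^{-1}\p_t\theta$); the residual sign and $\nu$-factor mismatches with the displayed \eqref{eq:simplewave2} stem from the paper's own inconsistent conventions for $G$ and $\phi$ (compare the definition used in the proof of Theorem \ref{theorem:easy}), not from a flaw in your argument.
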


\begin{proof}
  Direct calculation. We further use that by assumption the $x$-averages of
  $\omega$ vanishes and we hence lose no information by considering the $x$
  derivative only.
\end{proof}
We remark that in \cite{masmoudi2020stability} instead the unknown $K=\Delta_t
\p_x^{-1}G$ is considered. The present choice is made for two reasons:
\begin{itemize}
\item Since $\p_x^2 \Delta_t^{-1}$ is an order $0$ multiplier, $G$ and $\theta$
  can be treated similarly and we for instance do not have to introduce energies
  such as $\|\nabla \omega\|_{L^2}^2 + \|\theta\|_{L^2}^2$ with different
  numbers of derivatives.
\item The evolution equation of $G$ follows more immediately from the equation
  for $\omega$ and $\p_x^2 \Delta_{t}^{-1} \p_t \theta$ exhibits further
  cancellations.
  More precisely, an operator such as
  $\p_x^2\Delta_t^{-1}(\cos(x)\Delta_t^{-1})$ exhibits good bounds since
  $\cos(x)$ induces a Fourier shift and if a frequency $k$ in $x$ resonant, then
  $k+1, k-1$ are non-resonant (see Section \ref{sec:G} for a definition of
  resonance and corresponding estimates).
\end{itemize}

In order to introduce ideas, we first consider a further simplified model
problem.
\begin{defi}
  \label{defi:model}
  In a \emph{model problem} we formally set
  $G\equiv 0$, which yields the following system:
  \begin{align}
    \label{eq:toy}
  \begin{split}
  G &= 0, \\
  \dt \theta &= (\frac{g}{\nu} \cos(x) \p_y \Delta_t^{-1} \p_x \Delta_t^{-1} \theta)_{\neq},
  \end{split}
\end{align}
and its equivalent Fourier characterization (in coordinates $(x+ty,y)$):
\begin{align*}
  \begin{split}
  \dt \tilde{\theta}(t,k,\eta) + i \eta \frac{g(t)}{2} \tilde{\phi}(t,k+1,\eta) + i \eta \frac{g(t)}{2}\tilde{\phi}(t,k-1,\eta) &=0,\\
  \nu (k^2+ (\eta-kt)^2)^2 \tilde{\phi}(t,k,\eta) + ik \tilde{\theta}(t,k,\eta)&=0.
  \end{split}
\end{align*}
\end{defi}
Inserting the second equation into the first, we obtain the following
nearest neighbor ode system for $\theta$:
\begin{align*}
  \dt \tilde{\theta}(k) + c_{k-1} \tilde{\theta}(k-1) + c_{k+1} \tilde{\theta}(k+1)&=0, \\
  c_{l} &= \frac{g(t)}{2} \frac{\eta l}{\nu (l^2 + (\eta-lt)^2)^2}\\
  &= \frac{g(t)}{2\nu}\frac{\eta}{l^3} \frac{1}{(1+(\frac{\eta}{l}-t)^2)^2}
\end{align*}
Next, for a heuristic argument assume that $\frac{\eta}{k^2}$ is very large, $t
\approx \frac{\eta}{k}$ and that $c:=\frac{g}{2\nu}$ is small.
Then $c_k(t)= c \frac{\eta}{k^3}\frac{1}{(1+(\frac{\eta}{k}-t)^2)^2}$ will be
resonant and possibly very large.
In contrast, if $l\neq k$, then $(\frac{\eta}{l}-t)^2 \geq
\frac{1}{4}\max(\frac{\eta}{k^2}, \frac{\eta}{l^2})$ and thus $c_{l}$ can be
controlled in terms of $\frac{c}{l}\max(\frac{\eta}{k^2}, \frac{\eta}{l^2})^{-3}$ and will
be very small.
Thus, it seems reasonable to expect that the growth mechanism is largely
determined by
\begin{align*}
  \dt \tilde{\theta}(k-1)\approx c \frac{\eta}{k^3}\frac{1}{(1+(\frac{\eta}{k}-t)^2)^2} \tilde{\theta}(k).
\end{align*}
Since
\begin{align*}
  \int_{\R} \frac{1}{(1+(\frac{\eta}{k}-t)^2)^2} = \frac{\pi}{2}
\end{align*}
and $c\frac{\eta}{k^3}$ is large, this causes $\tilde{\theta}(k-1)$ to grow by a large multiple of $\tilde{\theta}(k)$.
Iterating this procedure with decreasing $k$, thus suggests that
\begin{align*}
  \theta(1, t\approx 1) \approx \frac{(c\eta)^k}{(k!)^3} \tilde{\theta}(k,t\approx \frac{\eta}{k}).
\end{align*}
This product is maximal for $k \approx \sqrt[3]{c\eta}$
\begin{align*}
  \frac{(c\eta)^k}{(k!)^3} \approx \frac{1}{(c\eta)^{2/3}} \exp(\sqrt[3]{c\eta}).
\end{align*}
Thus, these heuristics suggest that the model problem exhibits norm inflation at
this exponential rate and hence Gevrey $3$ regularity is critical.\\

A main challenge in this article is to show that one indeed can reduce to
the case $t\approx \frac{\eta}{k}$ with $\frac{\eta}{k^2}$ large and that the
full evolution matches the growth behavior of \eqref{eq:toy}, though resonances
in $G$ and the system structure make this problem more challenging.

For later reference, we introduce the following notational conventions:
\begin{defi}
  \label{defi:Ik}
  Let $\eta>0$ be given. Then for any $k > 0$ we define
  \begin{align*}
    t_{k}= \frac{1}{2}\left( \frac{\eta}{k+1}+ \frac{\eta}{k}\right)
  \end{align*}
  and $t_0:= 2 \eta$.
  Then the $k$-th resonant time interval $I_k$ is given by $I_k=(t_k,t_{k-1})$.
\end{defi}
In particular, we observe that for $t \in I_{k}$ it holds that for any $l \neq k$
\begin{align*}
  |t -\frac{\eta}{l}|&\geq \min(|t- \frac{\eta}{k-1}|, |t- \frac{\eta}{k+1}|)\\
 &\geq \frac{1}{4} \max\left(\frac{|\eta|}{k^2}, \frac{|\eta|}{l^2}\right).
\end{align*}
Thus, for any $l \neq k$ and $t \in I_k$ we may estimate
\begin{align*}
  c_l \leq \frac{g(t)}{2 k \nu} (\frac{\eta}{k^2})^{-3} \ll 1.
\end{align*}
Furthermore, we note that the interval $I_k$ has length about $\frac{\eta}{k^2}$
and thus also $\int_{I_k} c_l$ is small.

In order to make our stability analysis in the following sections precise, we need to specify a space $X$ with respect to
which stability is measured.
\begin{defi}[The space $X$]
  \label{defi:X}
  Consider a weight function $\lambda(l)>0$ on $\ell^2$ with
  \begin{align*}
    \sup\limits_{l\in \Z} \frac{\lambda(l\pm 1)}{\lambda(l)} < 2.
  \end{align*}
  Then the Hilbert space $X$ is given by all sequences $(u_l)_l$ such that
  $(\lambda_l u _l)_{l}\in \ell^2$ with the associated inner product.
\end{defi}
The main examples we are interested in are
\begin{itemize}
\item $\lambda(l)\equiv 1$, which yields $X=\ell^2$, 
\item $\lambda(l)= 1+ 2^{-N} |l|^N$, which corresponds to $H^N$ in physical
  space, and 
\item $\lambda(l)=2^{|l|}$, which implies that $u$ is
  analytic in physical space.
\end{itemize}
Throughout the remainder of the article we will consider $X$ to be an arbitrary but fixed such space.
Our plan for the remainder of this article is the following:
\begin{itemize}
\item In Section \ref{sec:small} we show that if we pick initial data $\theta, G
  \in X$ which is localized at frequency $\eta$, then the solution remains
  stable until a time much larger than $\eta^{2/3}$ (and for all time if $\eta$
  is small) and is stable again after the time $2\eta$. Thus, any norm inflation
  has to happen between these two times.
  Furthermore, while the evolution of $(\theta,G)$ is not invertible due to dissipation, for
  small $G$ we show that the evolution of $\theta$ is a small perturbation
  of the identity if the frequency $\eta$ is large.
\item In Section \ref{sec:G} we study the resonance mechanism on the remaining
  time interval. Here we show that resonances could result in bounded norm
  inflation.
  More precisely, we establish upper bounds on possible growth, which are valid
  for all initial data. Lower bounds are studied in Section \ref{sec:blow-up}
\item In Section \ref{sec:proofofmain} we combine the bounds establish for the
  various time regimes to prove the Gevrey $3$ regularity result of Theorem
  \ref{theorem:main}.
\item In Section \ref{sec:blow-up} we show that these upper bounds are optimal
  (up to changes of constants in the exponents) by constructing data exhibiting
  norm inflation. Moreover, we construct data in a critical Gevrey regularity
  class that not only exhibits norm inflation but blow-up as time tends to
  infinity.
  We stress here that in the Euler equations or Vlasov-Poisson equations
  \cite{dengZ2019,zillinger2020landau} the lack of dissipation allowed for an
  inversion of the time direction and thus to more easily construct initial data
  producing desired final data. In contrast, the viscous dissipation of the
  Boussinesq equations prevents any invertibility in time. It thus is a very
  challenging problem to ensure the existence of data achieving norm inflation. 
\end{itemize}

\section{Stability and Gevrey 3 regularity}
\label{sec:stability}
In the heuristic model of Section \ref{sec:wave} we showed that most
growth is expected to occur when
\begin{align*}
  t\approx \frac{\eta}{k}
\end{align*}
and $k$ is such that $c\frac{\eta}{k^3}$ is large and that this growth is
expected to be bounded by
\begin{align*}
  \exp(C \sqrt[3]{c\eta})
\end{align*}
for some constant $C$.

In this section we prove a corresponding stability estimate which shows that
this factor indeed provides an upper bound.
As a complementary result, in Section \ref{sec:blow-up} we show that there exist solutions
attaining such growth (possibly with smaller constant $C$). We then use the norm
inflation solutions as building blocks to construct solutions exhibiting blow-up.

In our analysis we first show that if $\eta$ is much smaller than $c^{-1}$, then
the evolution is globally stable for all times.
If $\eta$ is not this small we consider four time regimes (see also Figure \ref{fig:regimes}):
\begin{itemize}
\item Define $k_0\approx \sqrt[3]{c\eta\pi}$ (rounded down) and let $k_1=20 k_0$.
  Then for all $k\geq k_1$ it holds that $\frac{c\eta}{k^3}$ is much smaller
  than $1$.
  We hence consider the interval of times
  \begin{align*}
    0\leq t\leq t_{k_1}
  \end{align*}
   with $t_{k_1}= \frac{1}{2}(\frac{\eta}{k_1+1} + \frac{\eta}{k_1})<
   \frac{\eta}{k_1}$ (see Definition \ref{defi:Ik}) as the
  \emph{small time} regime.
  Here we show that the evolution is stable and, in a suitable sense, close to
  the identity.
\item  We observe that for $k\geq k_0$, $\frac{c\eta}{k^3}$ is bounded by $1$
  but not necessarily small. We hence call
  \[(t_{k_1}, t_{k_0})\]
  the \emph{intermediate} time regime.
  We here derive rough upper bounds showing that the norm at most
  grows by $\exp(40 \sqrt[3]{c\eta})$.
\item We next consider the interval \[(t_{k_0}, 2\eta),\] which is composed of
  intervals $I_k$ with
  \begin{align*}
    \frac{c\eta\pi}{k^3}\geq 1
  \end{align*}
  and potentially very large.
  Here we encounter the main resonance mechanism discussed in the heuristic model of Section
  \ref{sec:wave} and establish an upper growth bound by
  $\exp(10 \sqrt[3]{c\eta})$. We call this interval the \emph{resonant} regime.
  In Section \ref{sec:blow-up} we further introduce an additional time $t_{k_3}$
  with $k_3= \frac{k_0}{1000}$ up to which we also establish lower bounds on
  norm inflation.
\item Finally, we show that on the interval \[(2\eta,\infty)\] the evolution is
  stable and that the evolution of $\theta$ is a bounded perturbation of the identity.
  Thus, there can be no further norm inflation in this \emph{long time} regime.
\end{itemize}
\begin{figure}
  \centering
  \includegraphics[width=0.5\linewidth]{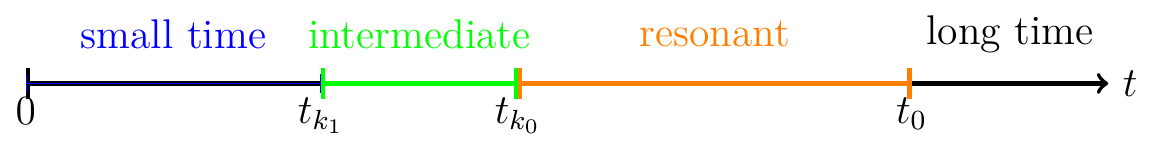}
  \caption{The time regimes are determined by the size of $\frac{c\eta}{k^3}$
    for $t \in I_k$.
    If $k$ is very large, i.e. for small times, this factor is very small. If
    instead $k\leq k_0$, then this factor is large, leading to possibly large
    resonances.}
  \label{fig:regimes}
\end{figure}

We stress that the asymptotic stability in the long time regime highlights that norm
inflation by a finite time does not imply instability as time tends to infinity.
Hence in Section \ref{sec:blow-up} we need to combine infinitely many solutions
exhibiting norm inflation to obtain non-trivial asymptotic behavior like
blow-up.

\subsection{Small Time, Large Time and Small Frequencies}
\label{sec:small}

In this subsection we consider the regimes where $t$ or $\eta$ are small or when
$t>2 \eta$ is large.
In all these regimes it turns out that the evolution is well-controlled and, in
a sense, a small perturbation of the identity for $\theta$ and a dissipative
equation for $G$.
Our main results are summarized in the following theorem.
\begin{theorem}[Stable regimes]
\label{theorem:easy}
  Let $\eta \in \R$ be given and for simplicity of notation assume $\eta>0$.
  Let further $0\leq \frac{g}{\nu}=c<0.001$.
  
  Then we have the following stable time intervals depending on $\eta$:
  \begin{enumerate}
  \item If $c \eta \ll 1$, then for all times $t>0$ it holds that
    \begin{align*}
      40^2 \|\theta(t)\|_X + \|G(t)\|_{X} \leq 4( 40^2 \|\theta(0)\|_{X} + \|G(0)\|_{X}).
    \end{align*}
  \item Suppose that $c\eta \geq 0.001$, then for all times $t> 2\eta=:t_0$ it holds
    that
    \begin{align*}
      \|\theta(t)\|_X + \|G(t)\|_{X} \leq 4 (\|\theta(t_0)\|_{X} + \|G(t_0)\|_{X}).
    \end{align*}
    Moreover,
    \begin{align*}
      \|\theta(t)-\theta(t_0)\|_X\leq c \eta^{-1} 2 (\|\theta(t_0)\|_{X} + \|G(t_0)\|_{X}).
    \end{align*}
    The evolution of $\theta$ is a bounded perturbation of the identity.
  \item Consider the early time regime given by $0<t<
    \frac{\eta}{\sqrt{8000 c\eta \pi}}$. Then on this time interval it
    holds that
        \begin{align*}
      40^2 \|\theta(t)\|_X + \|G(t)\|_{X} \leq 2 (40^2 \|\theta(0)\|_{X} +\|G(0)\|_{X}).
    \end{align*}
  \end{enumerate}
\end{theorem}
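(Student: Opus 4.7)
Since the coefficients in \eqref{eq:simplewave2} are independent of $y$ in the moving frame, I fix $\eta>0$ as a parameter and reduce to a nearest-neighbor ODE in $k\in\Z\setminus\{0\}$ for the Fourier coefficients $\tilde\theta(t,k),\tilde G(t,k)$: the operator $\cos(x)$ acts as the shift $k\mapsto k\pm 1$ and $\Delta_t$ acts as multiplication by $-(k^2+(\eta-kt)^2)$. The plan is a Duhamel--Gronwall estimate on the combined energy $A\|\tilde\theta(t)\|_X^2+\|\tilde G(t)\|_X^2$, using the dissipative semigroup $\exp\bigl(-\nu\int_s^t(k^2+(\eta-k\tau)^2)d\tau\bigr)$ for $\tilde G$, the identity for $\tilde\theta$, and the nearest-neighbor contraction $\lambda(k\pm 1)/\lambda(k)<2$ from Definition \ref{defi:X}, which makes the Fourier shifts by $\pm 1$ cost only a bounded factor in $\|\cdot\|_X$.

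\textbf{Core Lorentzian estimate.} All coupling terms in \eqref{eq:simplewave2} take one of the two Lorentzian shapes
\begin{align*}
\frac{c\eta\,|l|}{\bigl(l^2+(\eta-lt)^2\bigr)^2}\qquad\text{or}\qquad\frac{c\eta}{|l|\bigl(l^2+(\eta-lt)^2\bigr)},
\end{align*}
whose integrals over $t\in\R$ evaluate explicitly to $c\eta\pi/(4|l|^3)$ and $c\eta\pi/(2|l|^3)$. The only non-pointwise term, $\p_x\Delta_t^{-1}\p_t\theta$ in the $G$-equation, is reduced to the same form by substituting the $\theta$-equation for $\p_t\theta$ and using that $\p_x^2\Delta_t^{-1}$ is order-zero with uniform bound $\le 1$. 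Summing in $l$ gives a total $L^1_t\ell^1_l$-coupling size of $O(c\eta)$, the quantity controlling the Gronwall argument. The constant $A$ (e.g.\ $A=40^2$ in (1)) rebalances the asymmetric feedback: $\tilde G$ enters $\dt\tilde\theta$ through an order-zero operator of size close to $1$, whereas $\tilde\theta$ enters $\dt\tilde G$ with the small factor $f(t)\le c/(1+t^2)$ or with the order-$(-2)$ operator $2\p_x^2(\p_y-t\p_x)\Delta_t^{-2}$, and the off-diagonal products in the Gronwall bound then cancel.

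\textbf{Applying the scheme in each regime.} For (1) I integrate over $[0,\infty)$ and use $c\eta<10^{-3}$ directly: $\sum_l c\eta\pi/(4|l|^3)\ll 1$ and Gronwall yields the factor $4$ (coming from a factor $2$ in the squared energy). For (3) I integrate over $[0,T_*]$ with $T_*=\sqrt{\eta/(8000c\pi)}$; the key point is that for any $l$ with $c\eta/|l|^3\gtrsim 1$, the resonant time $\eta/|l|$ exceeds $T_*$, so on $[0,T_*]$ one has $l^2+(\eta-lt)^2\gtrsim\eta^2$ and the truncated $L^1$-norms of the multipliers sum to $o(1)$. For (2) I integrate over $[2\eta,\infty)$: here $|\eta-lt|\ge|l|t/2\ge|l|\eta$ forces the $\theta\to\theta$ multiplier to integrate to $O(c/(l^3\eta^2))$ and the $G\to\theta$ multiplier to $O(c/(l^3\eta))$, whose sums in $l$ are $O(c/\eta^2)$ and $O(c/\eta)$ respectively; together with $\|\tilde G(s)\|_X\le\|\tilde G(t_0)\|_X$ this produces the refined bound $\|\theta(t)-\theta(t_0)\|_X\le 2c\eta^{-1}(\|\theta(t_0)\|_X+\|G(t_0)\|_X)$. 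Uniform control of $\tilde G$ itself follows from the super-exponential decay $\exp(-\nu l^2(\tau^3-t_0^3)/3)$ of the dissipative semigroup on $[2\eta,\infty)$, which also closes the factor-$4$ bound.

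\textbf{Main obstacle.} The technical heart of the argument is the handling of the $\p_x\Delta_t^{-1}\p_t\theta$ term in the $G$-equation, which is not a pointwise multiplier and, if estimated naively, would cost a time derivative of $\theta$ and destroy the scheme. The remedy is to substitute the $\theta$-equation so as to express it as one more nearest-neighbor coupling of the same Lorentzian shape as above; one must then verify that the extra coupling this produces is small enough to remain compatible with the chosen constant $A$, in particular with $A=40^2$ in (1). Once this bookkeeping is carried out, all three statements follow from the same Duhamel--Gronwall lemma applied on different time intervals, with the stated numerical constants.
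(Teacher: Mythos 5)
Your overall architecture (reduce to a nearest-neighbor system in $k$ for fixed $\eta$, substitute the $\theta$-equation for $\p_t\theta$ in the $G$-equation, and run an asymmetrically weighted energy estimate with the weight $40^2$ on $\theta$) matches the paper's, and your treatment of the large-time regime $t>2\eta$ is essentially the paper's Gronwall argument. But there is a genuine gap in regimes (1) and (3): your controlling quantity, ``the total $L^1_t\ell^1_l$-coupling size is $O(c\eta)$,'' is false for the buoyancy-type transfer $2\p_x^2(\p_y-t\p_x)\Delta_t^{-2}\theta$ in the $G$-equation, whose Fourier coefficient $2(\tfrac{\eta}{l}-t)/(1+(\tfrac{\eta}{l}-t)^2)^2$ carries \emph{no} factor of $c$. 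Per frequency its $L^1_t$-norm is $2$ (not small), and summed over the frequencies whose resonant times $\eta/l$ lie in the window it is of order the number of such frequencies, which is unbounded both on $(0,\infty)$ and on $(0,\eta/\sqrt{8000c\eta\pi})$ (all $l\gtrsim k_1$ resonate there). A frequency-uniform Gronwall on $40^2\|\theta\|_X^2+\|G\|_X^2$ does not rescue this either: since at every time $t\lesssim\eta$ some mode $l\approx\eta/t$ is near-resonant, the instantaneous $X\to X$ norm of this coupling is $\approx 1/40$ throughout the window, so the Gronwall exponent scales with the window length ($\sim\eta$, resp.\ $\sim\sqrt{\eta/c}$), not with $O(1)$. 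The dissipative semigroup cannot be invoked for smallness here because the estimate must be uniform as $\nu\downarrow 0$ with $c=g/\nu$ fixed. This is exactly the obstruction the paper flags (``if one were to first take the supremum in $k$ and then integrate, the norm would be large''), and it is why the paper does not run a plain Gronwall but builds a Lyapunov functional with \emph{frequency-dependent, decreasing} multipliers $A(t,k)$ of arctan type: each mode has a bounded decay budget that absorbs the unit-size transfer exactly once, across its single resonance, so the total loss is a uniform constant. Without either this device or a mode-by-mode Duhamel/bootstrap (as in Section 5), your scheme does not close; note also that the term you single out as the main obstacle, $\p_x\Delta_t^{-1}\p_t\theta$, is the harmless one (both you and the paper dispose of it by substitution), while the genuinely dangerous term is the one above.

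Two smaller points. First, your claim that $G$ enters $\dt\theta$ ``through an order-zero operator of size close to $1$'' is not accurate — that coefficient is $c\eta/l^3$ times a Lorentzian and carries the small factor $c$; the unit-size coupling goes the other way, which is precisely why the $40^2$ weighting is placed on $\theta$ (your rebalancing direction is right, but the justification should be tied to the $\theta\to G$ term). Second, in regime (2) your $G\to\theta$ integral is off by a factor of $\eta$: with $|\eta/l-t|\geq t/2$ one gets $\int_{2\eta}^\infty c\frac{\eta}{l^3}\frac{dt}{1+(\eta/l-t)^2}=O(c/l^3)$, not $O(c/(l^3\eta))$, so your derivation of the refined bound $\|\theta(t)-\theta(t_0)\|_X\leq 2c\eta^{-1}(\dots)$ does not follow as written (the paper's own proof likewise only produces a bound of size $c+(1+\eta)^{-1}$ there).
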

The factor $40$ here is used to control a potentially large interaction term between
$\theta$ and $G$. Since this term is small in the large time regime, there we
may consider more symmetric energies.

We remark that this discussion omits the two regimes $t \in I_k$, where
in the intermediate time regime
\begin{align*}
  \frac{c\eta}{k^3} \pi \in (\frac{1}{8000},1)
\end{align*}
or in the resonant time regime
\begin{align*}
  \frac{c\eta}{k^3} \pi \geq 1.
\end{align*}
Establishing bounds on the growth in these regimes requires
significantly more effort and is the main challenge of Sections~\ref{sec:G} and
\ref{sec:intermediate}.

\begin{proof}[Proof of Theorem \ref{theorem:easy}]
  We recall from Lemma \ref{lemma:goodunknown} that the evolution equations with
  respect to $\theta$ and $G$ are given by \eqref{eq:simplewave2}:
  \begin{align*}
    \dt G &= \nu \Delta_{t} G + \nu f \p_x \cos(x)\p_y \phi+ 2\p_x^3 (\p_y-t\p_x) \Delta_t^{-2} \theta - \p_x^2\Delta_t^{-1}(g \cos(x) \p_y\phi), \\
    \dt \theta &= g \cos(x) \p_y\phi,
  \end{align*}
  where
  \begin{align*}
    G&= \nu \p_x \omega - \p_x^2 \Delta_{t}^{-1} \theta, \\
    \phi &= \nu^{-1} \Delta_{t}^{-1} \p_x^{-1} G + \nu^{-1}\p_x \Delta_t^{-2} \theta.
  \end{align*}

  \underline{The model case:}
  In order to introduce ideas, let us first consider the model problem of
  Definition \ref{defi:model} where we
  formally set $G \equiv 0$.
  Then we define the Fourier multiplier
  \begin{align*}
    A(t,l,\eta) = \exp\left(C (\arctan(\frac{\eta}{l-1}-t) + \arctan(\frac{\eta}{l}-t) +\arctan(\frac{\eta}{l+1}-t)) \right),
  \end{align*}
  where we omit terms in which we would divide by $0$ and $C=0.01$ is a small constant.
  In particular, we note that $A(t,l,\eta)\approx 1$ uniformly in $t,l, \eta$ and
  that $A$ is decreasing in time.
  We then aim to show that the energy
  \begin{align*}
    \|A \theta\|_{X}^2
  \end{align*}
  is non-increasing in time and thus serves as a Lyapunov functional.
  We hence compute the time derivative of this energy and observe
  that on the one hand
  \begin{align*}
    \langle A \theta, \dot{A}\theta \rangle\leq 0
  \end{align*}
  is non-positive provides decay in terms of the Fourier multipliers
  \begin{align*}
    -C \frac{1}{1+(\frac{\eta}{l-1}-t)^2}  -C \frac{1}{1+(\frac{\eta}{l}-t)^2}-C \frac{1}{1+(\frac{\eta}{l+1}-t)^2}.
  \end{align*}
  On the other hand the contribution
  \begin{align*}
    \langle A \theta, A \dt \theta \rangle
  \end{align*}
  is possibly positive, but can be bounded from above in terms of the Fourier
  multipliers
  \begin{align*}
    c \frac{(l-1)\eta}{((l-1)^2 + (\eta-(l-1)t)^2)^2} +c \frac{l\eta}{(l^2 + (\eta-lt)^2)^2}+ c \frac{(l+1)\eta}{((l+1)^2 + (\eta-(l+1)t)^2)^2},
  \end{align*}
  by using Young's inequality and the fact that there is only interaction
  between neighboring modes.
  It hence suffices to show that
  \begin{align}
    \label{eq:easy}
    c \frac{l\eta}{(l^2 + (\eta-lt)^2)^2} = \frac{c\eta}{l^3} \frac{1}{(1+(\frac{\eta}{l})^2)^2}  \leq \frac{C}{2} \frac{1}{1+(\frac{\eta}{l}-t)^2},
  \end{align}
  In any of the regimes considered we can then use different arguments to show
  that \eqref{eq:easy} holds:
  \begin{enumerate}
  \item If $c \eta\ll 1$ is small, $\frac{c\eta}{l^3}$ is small for all $l$ and thus
    \begin{align*}
       c \frac{l\eta}{(l^2 + (\eta-lt)^2)^2} = c\frac{\eta}{l^3} \frac{1}{(1+(\frac{\eta}{l}-t)^2)^2}\leq c \eta \frac{1}{1+(\frac{\eta}{l}-t)^2}\leq \frac{C}{2} \frac{1}{1+(\frac{\eta}{l}-t)^2}
    \end{align*}
    satisfies \eqref{eq:easy} for all $l$. 
  \item Similarly, if $t>2\eta$, then we can control by
    \begin{align*}
      c\frac{\eta}{l^3} \frac{1}{1+\eta^2} \frac{1}{1+(\frac{\eta}{l}-t)^2} \leq c \frac{1}{1+(\frac{\eta}{l}-t)^2} \leq \frac{C}{2} \frac{1}{1+(\frac{\eta}{l}-t)^2},
    \end{align*}
    and hence \eqref{eq:easy} holds for all $l$. 
  \item Finally, let $ c \eta \gtrsim 1$ and suppose that  $t \approx \frac{\eta}{k}
    \lesssim c^{-1/3} \eta^{2/3}$ is not too large.
    Then it follows that
    \begin{align*}
      c \frac{\eta}{k^3} \approx c \eta^{-2} t^{3}\ll 1,
    \end{align*}
    which is the reason for our choice of upper bound on $t$.
    
    We then distinguish two cases. If $l>\frac{k}{2}$, then 
    \begin{align*}
      c \frac{\eta}{l^3} \leq 8 c \frac{\eta}{k^3} \ll 1,
    \end{align*}
    and thus the estimate \eqref{eq:easy} holds for such $l$.
    If instead $l\leq \frac{k}{2}$, then $\frac{\eta}{l}\geq 2 \frac{\eta}{k}\geq
  2t$ and hence
  \begin{align*}
    c \frac{\eta}{l^3}\frac{1}{1+(\frac{\eta}{l}-t)^2} \leq c  \frac{\eta}{l^3}\frac{1}{1+\frac{1}{4}(\frac{\eta}{l})^2} \leq 4 c \ll 1.
  \end{align*}
  Thus, the estimate \eqref{eq:easy} also holds for such $l$, which concludes
  the proof for the model case.
\end{enumerate}

\underline{The general case:}
Building on the insights developed in the model case, we turn to the full
problem:
  \begin{align*}
    \dt G &= \nu \Delta_{t} G + \nu f\p_x  \cos(x) \p_y \phi + \p_x^2\Delta_t^{-1}(g \cos(x) \p_y\phi) + 2 (\p_y-t\p_x)\p_x^3 \Delta_{t}^{-2}\theta, \\
    \dt \theta &= g \cos(x) \p_y\phi,\\
    \phi &= \nu^{-1} \Delta_{t}^{-1} \p_x^{-1} G + \nu^{-1}\p_x \Delta_t^{-2} \theta.        
  \end{align*}
Here, main additional challenges are given by the slower decay of
$\Delta_{t}^{-1}$ as compared to $\Delta_t^{-2}$ and the potentially large size
of the multiplier corresponding to $(\p_y-t\p_x)\p_x^3 \Delta_{t}^{-2}$.

\underline{The large time case $t > 2 \eta$:}
We first discuss the simplest case where $t>2\eta$ is large.

Here we observe the following multiplier estimates:
\begin{align}
  \label{eq:multipliers}
  \begin{split}
  g \nu^{-1} \p_y \Delta_{t}^{-2}  \mapsto \frac{c\eta}{k^3}\frac{1}{(1+(\frac{\eta}{k}-t)^2)^2} &\leq c \frac{1}{(1+(t/2)^2)^{3/2}}, \\
  g \nu^{-1} \p_y \Delta_{t}^{-1} \p_x^{-1} \mapsto \frac{c\eta}{k^3} \frac{1}{1+(\frac{\eta}{k}-t)^2} &\leq c \eta \frac{1}{(1+(t/2)^2)}, \\
  2 (\p_y-t\p_x)\p_x^3 \Delta_{t}^{-2} \mapsto 2 \frac{\frac{\eta}{k}-t}{(1+(\frac{\eta}{k}-t)^2)^2} &\leq \frac{2}{(1+(t/2)^2)^{3/2}}, \\
  \p_x^2\Delta_t^{-1} \mapsto \frac{1}{1+(\frac{\eta}{k}-t)^2} &\leq \frac{1}{1+(t/2)^2},\\
  \nu f &\leq \frac{c}{1+t^2},
\end{split}
\end{align}
where we used that $t-\frac{\eta}{k}\geq t -\eta \geq t/2$.

We thus conclude that
\begin{align*}
  \dt (\|G\|_X^2 + \|\theta\|_{X}^2) \leq - \nu \|\nabla_t G\|_{X}^2 + \max(c \eta \frac{1}{(1+(t/2)^2)}, \frac{2}{(1+(t/2)^2)^{3/2}}) (\|G\|_X^2 + \|\theta\|_{X}^2).
\end{align*}
Since
\begin{align*}
  \int_{2\eta}^{\infty} c \eta \frac{1}{(1+(t/2)^2)} + \frac{2}{(1+(t/2)^2)^{3/2}} dt \leq c + (1+|\eta|)^{-1}
\end{align*}
it hence follows that
\begin{align*}
  \|G(t)\|_X^2 + \|\theta(t)\|_{X} \leq \exp(c+ (1+|\eta|)^{-1}) (\|G(2 \eta)\|_X^2 + \|\theta(2\eta)\|_{X}).
\end{align*}
The solution is stable.
Moreover, we may insert these bounds into the integral equations to conclude
that
\begin{align*}
  \|\theta(t)-\theta(2\eta)\|_X &\leq \int_{2\eta}^{\infty} \|\dt \theta\|_X \leq c \|G\|_{L^\infty X} + (1+|\eta|)^{-1}\|\theta\|_{L^\infty X} \\
  &\leq
  (c+(1+|\eta|)^{-1}) \exp(c+ (1+|\eta|)^{-1}) (\|G(2 \eta)\|_X^2 + \|\theta(2\eta)\|_{X}).
\end{align*}
In view of the dissipation term, for $G(t)$ we instead estimate
\begin{align*}
  \|G(t)\|_X^2 - \|G(2\eta)\|_{X}^2 \leq (1+|\eta|)^{-1} \exp(c+ (1+|\eta|)^{-1}) (\|G(2 \eta)\|_X^2 + \|\theta(2\eta)\|_{X}).
\end{align*}

\underline{Small frequencies:}
We next turn to considering the regime where
\begin{align*}
  c\eta \leq 0.001
\end{align*}
is very small.
Again considering the multipliers shown in \eqref{eq:multipliers} we observe
that, for instance, we may estimate
\begin{align}
  \label{eq:smallfreq1}
  \begin{split}
  g \nu^{-1} \p_y \Delta_{t}^{-2}  \mapsto \frac{c\eta}{k^3}\frac{1}{(1+(\frac{\eta}{k}-t)^2)^2} &\leq 0.001 \frac{1}{(1+(\frac{\eta}{k}-t)^2)^{2}}, \\
  g \nu^{-1} \p_y \Delta_{t}^{-1} \p_x^{-1} \mapsto \frac{c\eta}{k^3} \frac{1}{1+(\frac{\eta}{k}-t)^2} &\leq 0.001 \frac{1}{1+(\frac{\eta}{k}-t)^2},
\end{split}
\end{align}
for all $k \in \Z \setminus\{0\}$.
The bound on the right-hand-side here is integrable in time with small norm for
any fixed $k$. However, if one were to first take the supremum with respect to
$k$ and then integrate the resulting norm would be large, since for any $t$
there exists some $k$ such that $t\approx \frac{\eta}{k}$.

We thus consider a \emph{frequency-dependent} multiplier $A$ of a similar form as in the
model problem in order to construct a Lyapunov functional:
\begin{align*}
  A(t,k) =  \exp\left(0.1 \sum_{l \in \{k-1,k,k+1\}}(\arctan(\frac{\eta}{l}-t)-\arctan(\frac{\eta}{l}-t_k)) \right).
\end{align*}
We note that $0.1 |\arctan(\frac{\eta}{l}-t)-\arctan(\frac{\eta}{l}-t_k)| \leq
0.1 \pi$ and hence $A$ is comparable to $1$ at all times.
Moreover, $A(t)$ is decreasing and 
\begin{align*}
  \dot A = A \mathcal{F}^{-1} \frac{-0.1}{1+(\frac{\eta}{k}-t)^2}\mathcal{F}
\end{align*}
is sufficiently negative to absorb multipliers such as in \eqref{eq:smallfreq1}.

We observe that one multiplier 
\begin{align}
\label{eq:smallfreq2} 
  2 (\p_y-t\p_x)\p_x^3 \Delta_{t}^{-2} \mapsto 2 \frac{\frac{\eta}{k}-t}{(1+(\frac{\eta}{k}-t)^2)^2} &\leq 2 \frac{1}{1+(\frac{\eta}{k}-t)^2}
\end{align}
does not necessarily include a small prefactor.
For this reason we make the following ansatz for our energy 
\begin{align*}
  E(t):= 40^2 \|A(t)\theta\|_{X}^2 + \|A(t)G\|_{X}^2,
\end{align*}
which weighs $\theta$ more highly. Here, with slight abuse of notation, we
identify $A$ and the corresponding Fourier multiplier and hence write $A\theta$
instead of $\mathcal{F}^{-1}(A\mathcal{F} \theta)$.

We then claim that $E(t)$ is non-increasing and thus provides a Lyapunov
functional.
We compute the time-derivative of $E(t)$ as
\begin{align*}
  \dt E(t) &=  2\cdot 40^2 \langle A \theta, \dot A \theta \rangle + 2 \langle A G, \dot{A} G \rangle\\
  & \quad + 2\cdot 40^2 \langle A \theta, A \dt \theta \rangle + 2 \langle A G, A \dt G \rangle,
\end{align*}
where $\langle , \rangle$ denotes the inner product in $X$.

By the estimate \eqref{eq:smallfreq1} it then follows that 
\begin{align*}
  40^2 \langle A \theta, A \dt \theta \rangle & \leq 0.001 \left\|\sum_{l \in \{k-1,k,k+1\}}\frac{1}{\sqrt{1+(\frac{\eta}{l}-t)^2}} 40 A\theta\right\|_X^2\\
                                              & \quad + 40\cdot 0.001 \left\|\sum_{l \in \{k-1,k,k+1\}}\frac{1}{\sqrt{1+(\frac{\eta}{l}-t)^2}} 40 A\theta\right\|_X \\
  & \quad \quad \quad \cdot \left\|\sum_{l \in \{k-1,k,k+1\}}\frac{1}{\sqrt{1+(\frac{\eta}{l}-t)^2}} A G\right\|_X, 
\end{align*}
where we used a sum to account for the Fourier shift due to $\cos(x)$.

Using Young's inequality in the last factor we thus conclude that any possible
growth due to $40^2 \langle A \theta, A \dt \theta \rangle$ can be absorbed by
the decay terms due to $\dot A$.

Similarly, for the time derivative of $G$ we compute:
\begin{align*}
  \langle A G, A \dt G \rangle &= - \nu \langle A \nabla_t G, A\nabla_t G \rangle\\
  & \quad +  \langle A G, A \p_x^3 (\p_y-t\p_x) \Delta_{t}^{-2} A \theta \rangle\\
  & \quad + \langle AG, A \nu f \cos(x) \p_y \phi \rangle \\
  & \quad + \langle A G, A \p_x^2 \Delta_{t}^{-1}\p_t \theta \rangle.
\end{align*}
The dissipation term is non-positive and thus potentially helpful. However, as
$\nu$ is allowed to be arbitrarily small, we do not make use of it in the
following.
For the next term we make use of the estimate \eqref{eq:smallfreq2} and express
\begin{align*}
  \langle A G, A \p_x^3 (\p_y-t\p_x) \Delta_{t}^{-2} A \theta \rangle = \langle A G, A \frac{1}{40} \p_x^3 (\p_y-t\p_x) \Delta_{t}^{-2} 40 A \theta \rangle
\end{align*}
to obtain a small factor $\frac{1}{40}$.

Finally, for the contributions by $\nu f \p_y \phi$ and by 
\begin{align*}
  \p_x^2 \Delta_{t}^{-1}\p_t \theta = \p_x^2 \Delta_{t}^{-1} g \cos(x) \p_y \phi
\end{align*}
we simply control the multiplier corresponding to $\p_x^2 \Delta_{t}^{-1}$ by
$1$ and $\nu f\leq \frac{c}{1+t^2}\leq c$.
The estimate of this term then follows as for $\langle A \theta, A \p_t \theta \rangle$.

In conclusion, we have thus shown that
\begin{align*}
  \dt E \leq 40^2 \langle A \theta, \dot A \theta \rangle +  \langle A G, \dot{A} G \rangle \leq 0
\end{align*}
and thus $E(t)$ is non-increasing, which is our desired stability estimate.

\underline{The small time regime:}
As a last regime we consider the case when $c\eta$ is allowed to be large but
$t$ is bounded by $\frac{\eta}{\sqrt[3]{8000 c\eta}}$.
Here we again aim to construct a Lyapunov functional by combining the estimates
of the previous two regimes, but will require small changes to the multiplier $A$.

Consider for instance the multiplier
\begin{align*}
  g \nu^{-1} \p_y \Delta_{t}^{-2}  \mapsto \frac{c\eta}{k^3}\frac{1}{(1+(\frac{\eta}{k}-t)^2)^2},
\end{align*}
and let $k_1 \approx \sqrt[3]{8000 c\eta}$ (rounded up).
Then for all $k \geq 0.5 k_1$ it holds that
\begin{align*}
  \frac{c\eta}{k^3} \leq \frac{2^3}{8000}\leq 0.001,
\end{align*}
and for these $k$ we may hence argue as in the small frequency regime.
Conversely, if $k < 0.5 k_1$, then
\begin{align}
  \label{eq:smallfreqnonres}
  \frac{\eta}{k} > 2 t \Rightarrow |\frac{\eta}{k}-t| \geq \frac{1}{2}|\frac{\eta}{k}|
\end{align}
and hence
\begin{align*}
  \frac{c\eta}{k^3}\frac{1}{(1+(\frac{\eta}{k}-t)^2)^2} \leq c \frac{1}{1+(\frac{\eta}{k}-t)^2}.
\end{align*}

Similarly, for the multiplier
\begin{align*}
  g \nu^{-1} \p_y \Delta_{t}^{-1} \p_x^{-1} \mapsto \frac{c\eta}{k^3} \frac{1}{1+(\frac{\eta}{k}-t)^2}
\end{align*}
we may estimate as in the small frequency regime if $k\geq 0.5 k_1$.
If $k<0.5 k_1$ we cannot spare powers of $t$ but note that by \eqref{eq:smallfreqnonres}:
\begin{align*}
  & \quad \int_{t \in \R: |t|<\frac{1}{2}|\frac{\eta}{k}|} \frac{c\eta}{k^3}\frac{1}{1+(\frac{\eta}{k}-t)^2} \\
  &\leq \frac{c\eta}{k^3} \int_{\tau\in R: |\tau|\geq \frac{1}{2}|\frac{\eta}{k}|}\frac{1}{1+\tau^2} \\
  &\leq \frac{c\eta}{k^3} \frac{1}{1+\frac{1}{2}|\frac{\eta}{k}|} \leq \frac{c}{k}.
\end{align*}
We thus consider a slightly different multiplier
\begin{align*}
  A(t) = \exp( 0.01 \arctan(\frac{\eta}{k}-t)- \arctan(\frac{\eta}{k}-t_k) + 1_{k<k_0} \int_{t_k}^t 10 \frac{c\eta}{k^3}\frac{1}{1+(\frac{\eta}{k}-s)^2}ds).
\end{align*}
Then by the same argument as in the previous regime it follows that
\begin{align*}
   E(t):= 40^2 \|A(t)\theta\|_{X}^2 + \|A(t)G\|_{X}^2
\end{align*}
is non-increasing. Since $A(t,k)$ is comparable to $1$, $E$ is thus a Lyapunov functional.
\end{proof}

This theorem shows that any norm inflation has to happen in the remaining time
regimes, where we distinguish between the intermediate time regime
\begin{align*}
  t \in I_k : \frac{c\eta}{k^3} \in (\frac{1}{8000}, \frac{1}{\pi}),
\end{align*}
which is considered in the following Section \ref{sec:intermediate} and the resonant time regime
\begin{align*}
  t \in I_k : \frac{c\eta}{k^3}\geq \pi^{-1},
\end{align*}
which is considered in Section \ref{sec:G}.

In both cases we determine the growth on individual intervals $I_k$ and then
show that the total growth obtained by iterating over all possible values of $k$
is bounded by $\exp(C \sqrt[3]{c\eta})$ for a suitable constant $C$, which is
consistent with Gevrey $3$ regularity.

\subsection{The Intermediate Regime}
\label{sec:intermediate}
In this section we consider the time intervals $I_k$ where
\begin{align*}
  c\frac{\eta}{k^3} \in (\frac{1}{8000}, \frac{1}{\pi}).
\end{align*}
We recall that the heuristic model of Section \ref{sec:wave} suggested a growth
of the neighbors of the resonant frequency by a factor $c \frac{\eta}{k^3}\pi$. In the present regime this
factor is still smaller than $1$ but not very small anymore.

As a main result of this subsection we show that on each interval $I_k$
the norm in $X$ grows at most by a factor $C$ independent of $\eta$.
As we show in Section \ref{sec:blow-up} this bound is probably far from optimal, but sufficient
for our stability estimates.
Indeed, we observe that if $t \approx \frac{\eta}{k}$ then in this regime $t$ is
proportional to $\eta^{2/3}$ and hence might be very large, but $k$ is contained
in the interval
\begin{align*}
  (\sqrt[3]{\frac{1}{\pi}c \eta}, \sqrt[3]{8000c \eta}).
\end{align*}
In particular, there are at most $20 \sqrt[3]{c\eta}$ such $k$.

Hence, a growth bound by a factor $C$ for each $k$ implies a total growth bound by
\begin{align*}
  C^{20 \sqrt[3]{c\eta}}= \exp(20 \ln(C) \sqrt[3]{c\eta}),
\end{align*}
which is consistent with Gevrey $3$ regularity.

Our results are summarized in the following theorem.
\begin{theorem}
  \label{theorem:intermediate}
  Let $\eta \gg 1$, $0<c<0.001$ be given. Let further $k\in \N$ with
  \begin{align*}
    \sqrt[3]{\pi c \eta} \leq k \leq \sqrt[3]{8000 c\eta\pi}.
  \end{align*}
  Then on the time interval $I_k=(t_{k}, t_{k-1})$ and for $t=t_{k-1}$ it holds that
  \begin{align}
    \label{eq:intermediateIk}
    \|\theta(t)\|_{X}+ \|G(t)\|_X \leq e^{3\pi} (\|\theta(t_{k})\|_X + \|G(t_{k})\|_X).
  \end{align}

  In particular, if $k_0\approx \sqrt[3]{\pi c \eta}$, $k_1\approx \sqrt[3]{8000
    c\eta\pi}$ (rounded to an integer), then it holds that
  \begin{align*}
    \|\theta(t_{k_0})\|_{X} + \|G(t_{k_0})\|_X \leq e^{20\cdot 3 \pi \sqrt[3]{c\eta}}(\|\theta(t_{k_1})\|_{X}+ \|G(t_{k_1})\|_X).
  \end{align*}
\end{theorem}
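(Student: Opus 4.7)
My plan is to mimic the Lyapunov-functional construction from the proof of Theorem \ref{theorem:easy}, but to relax the requirement that the energy be monotonically decreasing in favor of a Grönwall bound over the single interval $I_k$. The point is that in the intermediate regime $\frac{c\eta}{k^3}$ is no longer small, yet it is bounded above by $1/\pi$; since the resonant kernel $\frac{1}{1+(\eta/k-t)^2}$ has total $\R$-integral $\pi$, every ``dangerous'' contribution integrates over $I_k$ to at most an order-one constant, which produces an $e^{O(1)}$ multiplicative bound on each $I_k$. Iterating over the $\lesssim 20\sqrt[3]{c\eta}$ admissible $k$'s then yields Gevrey $3$ growth.

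\emph{Energy.} Work in Fourier-in-$x$ with $\eta$ as a parameter, so the system \eqref{eq:simplewave2} reduces to a nearest-neighbor ODE in $k$. For the resonant index and its two neighbors $\ell\in\{k-1,k,k+1\}$ (the only modes coupled to $k$ through the Fourier shift from $\cos(x)$) introduce the time-dependent weight
\[
A(t,\ell) = \exp\Bigl(10\,\tfrac{c\eta}{\ell^3}\int_{t_k}^{t} \tfrac{ds}{1+(\eta/\ell - s)^2}\Bigr),
\]
and extend $A$ to all other modes by the multiplier from the small-time part of Theorem \ref{theorem:easy}. Because $\frac{c\eta}{k^3}\leq 1/\pi$, the exponent is bounded by $10$ on $I_k$, so $A$ is comparable to a constant there; the time derivative $\dot A$ contributes the negative Fourier multiplier $-10\,\frac{c\eta}{\ell^3}\frac{1}{1+(\eta/\ell-t)^2}$, sufficient to absorb the contributions from $g\nu^{-1}\p_y\Delta_t^{-1}\p_x^{-1} G$ and $g\nu^{-1}\p_y\Delta_t^{-2}\theta$. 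I then set
\[
E(t) := 40^2 \|A(t)\theta\|_X^2 + \|A(t) G\|_X^2,
\]
exactly as in the small-frequency argument.

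\emph{Estimates.} For any non-resonant index $\ell\neq k$ one has $|t-\eta/\ell|\gtrsim \max(\eta/k^2,\eta/\ell^2)$ on $I_k$, so all relevant multipliers are bounded by $c/\ell$ times factors that are integrable in time, giving a harmless contribution. The cross-term in $\dot G$ coming from the buoyancy operator $2\p_x^3(\p_y-t\p_x)\Delta_t^{-2}\theta$ is rewritten as
\[
\langle AG,\, 2 A \p_x^3(\p_y-t\p_x)\Delta_t^{-2}\theta\rangle = \tfrac{1}{40}\,\bigl\langle AG,\, 2 A \p_x^3(\p_y-t\p_x)\Delta_t^{-2}\, 40 A\theta\bigr\rangle,
\]
Young's inequality combined with the pointwise bound $\bigl|\tfrac{2(\eta/\ell-t)}{(1+(\eta/\ell-t)^2)^2}\bigr|\leq \tfrac{1}{1+(\eta/\ell-t)^2}$ then produces a term $\leq \tfrac{1}{40}\tfrac{1}{1+(\eta/\ell-t)^2}E(t)$. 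The forcing $\nu f$ is controlled by $c/(1+t^2)$ as in Theorem \ref{theorem:easy}, and $\p_x^2\Delta_t^{-1}\p_t\theta$ is treated by substituting the $\theta$-equation and reducing to previous cases. Summing these pieces yields
\[
\dt E(t)\leq B(t) E(t), \qquad \int_{I_k} B(t)\,dt \;\leq\; \tfrac{2}{40}\int_{\R}\tfrac{dt}{1+(\eta/k-t)^2} + O(1/k) \;\leq\; 2\pi,
\]
hence $E(t_{k-1})\leq e^{2\pi} E(t_k)$. Since $A$ is comparable to a constant on $I_k$, this gives \eqref{eq:intermediateIk} with the stated $e^{3\pi}$ (the extra slack absorbs the $A$-to-identity conversion). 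There are at most $\lceil\sqrt[3]{8000\pi c\eta}\rceil-\lceil\sqrt[3]{\pi c\eta}\rceil\leq 20\sqrt[3]{c\eta}$ admissible integers $k$, so iterating produces the announced factor $e^{60\pi\sqrt[3]{c\eta}}$.

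\emph{Main obstacle.} The only delicate point is the buoyancy coupling $2\p_x^3(\p_y-t\p_x)\Delta_t^{-2}\theta$ in the $G$-equation, whose Fourier multiplier is $O(1)$ at the resonant time and has an $I_k$-integral that is \emph{not} small. Unlike the other coefficients, it cannot be absorbed by the negative part of $\dot A$; this is precisely why $E$ must weigh $\theta$ with the asymmetric factor $40^2$, so that a prefactor $\tfrac{1}{40}$ appears in the cross-term and the Grönwall exponent on $I_k$ stays universally bounded independent of $\eta$. Everything else is a quantitative refinement of the small-frequency analysis of Theorem \ref{theorem:easy}.
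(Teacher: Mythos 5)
Your overall strategy -- a per-interval multiplier/Grönwall bound that exploits $\int_{I_k}\frac{dt}{1+(\eta/k-t)^2}\leq\pi$ and is then iterated over the at most $20\sqrt[3]{c\eta}$ intermediate intervals -- is exactly the paper's, but the concrete weight and energy you choose do not deliver the absorption you claim. First, your weight at mode $\ell$ contains only the kernel centered at its \emph{own} critical time $\eta/\ell$. For $\ell=k\pm1$ that kernel is uniformly of size $(\eta/k^2)^{-2}$ on $I_k$, so $\dot A(\cdot,k\pm1)$ provides essentially no decay there; yet the dangerous terms in $\dt\theta_{k\pm1}$, namely $c^{\mp}_{k\pm1}\theta_k$ and $d^{\mp}_{k\pm1}G_k$, carry the kernel centered at $\eta/k$ with coefficient $\tfrac{c\eta}{k^3}\sim\tfrac1\pi$ and $I_k$-integral of order one. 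These cannot be absorbed by your $\dot A$ (also note that, as written, your exponent is positive, so $A$ is increasing and $\dot A$ has the wrong sign). Second, keeping the asymmetric energy $40^2\|A\theta\|_X^2+\|AG\|_X^2$ backfires in this regime: the $\theta$--$G$ cross terms are amplified by a factor $40$, which was affordable in Theorem \ref{theorem:easy} only because the coefficient there carried the tiny prefactor $0.001$; here the prefactor is $\tfrac{c\eta}{k^3}\leq\tfrac1\pi$, so these cross terms alone contribute roughly $40\cdot\tfrac1\pi\cdot\pi$ to $\int_{I_k}B$, and your bookkeeping "$\int_{I_k}B\leq\tfrac{2}{40}\pi+O(1/k)\leq 2\pi$" omits them. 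You would still get an $\eta$-independent constant per interval (hence a Gevrey-$3$ bound), but not the constants $e^{3\pi}$ per interval and $e^{60\pi\sqrt[3]{c\eta}}$ total that the statement asserts and that your proof claims to produce.

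The paper's fix is simpler and is the idea your proposal is missing: on $I_k$ the mode-$k$ kernel dominates all the other kernels, so one can take a single \emph{frequency-independent} decreasing weight $A(t)=\exp\bigl(-3\int_{t_k}^t\frac{d\tau}{1+(\eta/k-\tau)^2}\bigr)$ and the \emph{symmetric} energy $A(t)(\|\theta\|_X^2+\|G\|_X^2)$. Because the weight does not depend on the Fourier mode, the $\cos(x)$ shift creates no commutator terms, so the exponent may be taken as large as $3$; this rate absorbs the buoyancy multiplier (bounded by $2\cdot\frac{1}{1+(\eta/k-t)^2}$, so no $40^2$ asymmetry is needed) together with all $\theta$--$G$ and $\theta$--$\theta$ couplings, whose coefficients are at most $\tfrac{c\eta}{k^3}\leq\tfrac1\pi$ times that same kernel. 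Monotonicity of the energy plus $A\geq e^{-3\pi}$ then gives precisely \eqref{eq:intermediateIk}. If you want to salvage your frequency-dependent ansatz, you would at least have to include in $A(t,\ell)$ the kernels of the neighboring modes $\ell\pm1$ (as in the multiplier of Theorem \ref{theorem:easy}) and drop the $40^2$ weighting in this regime.
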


\begin{proof}[Proof of Theorem \ref{theorem:intermediate}]
Suppose for the moment the estimate \eqref{eq:intermediateIk} holds for all
$k_0\leq k\leq k_1$.
Then it follows that
\begin{align*}
  \|\theta(t_{k_0})\|_{X}\leq e^{2}\|\theta(t_{k_0+1})\|_X \leq e^2 e^2 \|\theta(t_{k_0+2})\|_X\leq e^{2|k_0-k_1|}\|\theta(t_{k_1}).
\end{align*}
Thus the claimed bound follows by noting that $|k_0-k_1|\leq
\sqrt[3]{8000c\eta}= 20 \sqrt[3]{c\eta}$.

It remains to prove the estimate \eqref{eq:intermediateIk}, for which we again
want to use a multiplier argument.
More precisely, we note that on the interval $I_k$ the coefficients
in front of the mode $k$ are the largest.
For instance,
  \begin{align*}
    \frac{c\eta}{k^3} \frac{1}{(1+(\frac{\eta}{k}-t)^2)^2} \geq |\frac{c\eta}{l^3} \frac{1}{(1+(\frac{\eta}{l}-t)^2})^2|
  \end{align*}
  for all $l \in \Z \setminus \{0\}$.

We thus consider the \emph{frequency-independent} multiplier
\begin{align*}
  A(t)= \exp(-3 \int_{t_{k}}^t \frac{1}{1+(\frac{\eta}{k}-\tau)^2} d\tau),
\end{align*}
Unlike in the regimes considered in Section \ref{sec:small} we here do not need
to control commutators due to the evaluation of $A$ at neighboring modes. Hence, here
we do not require a small exponent $0.01$ but can choose $3$ as a comparably
large exponent. We remark that a similar idea has been used in the analysis of
the nonlinear equations in \cite{masmoudi2020stability}.
We now claim that
\begin{align*}
 E(t):= A(t) (\|\theta(t)\|_X^2 + \|G(t)\|_{X}^2)
\end{align*}
is non-increasing for $t \in I_k$.
Since $A(t_k)=1$, $A(t)$ is decreasing in time and $A(t)\geq \exp(-3\pi)$, this
further implies
\begin{align*}
  \|\theta(t)\|_X^2 + \|G(t)\|_{X}^2 = A(t)^{-1} E(t)\leq e^{3\pi} E(t_{k})= e^{3\pi} (\|\theta(t_k)\|_X^2 + \|G(t_k)\|_{X}^2),
\end{align*}
and thus establishes the desired bound.

It remains to prove that $E(t)$ indeed is non-increasing.
We recall that
\begin{align*}
  \dt \theta = c \cos(x) (\p_y \p_x^{-1}\Delta_t^{-1} G + \p_y \p_x \Delta_t^{-2}\theta)
\end{align*}
and that at frequency $l$
\begin{align*}
  c \p_y \Delta_{t}^{-2}  &\mapsto \frac{c\eta}{l^3}\frac{1}{(1+(\frac{\eta}{l}-t)^2)^2}, \\
  c \p_y \Delta_{t}^{-1} \p_x^{-1} &\mapsto \frac{c\eta}{l^3} \frac{1}{1+(\frac{\eta}{l}-t)^2}. 
\end{align*}
Since $t \in I_k$ these multipliers are largest (in absolute value) if $l=k$ and
are hence both bounded by
\begin{align*}
  \frac{c\eta}{k^3}\frac{1}{1+(\frac{\eta}{k}-t)^2}
\end{align*}
and can therefore be absorbed into the decay of $A(t)$.

Similarly, we recall that
\begin{align*}
  \dt G &= \nu \Delta_{t} G + \nu f\p_x  \cos(x) \p_y \phi + \p_x^2\Delta_t^{-1}(g \cos(x) \p_y\phi) + 2 (\p_y-t\p_x)\p_x^3 \Delta_{t}^{-2}\theta, \\
  \nu \p_y \theta &= \p_y \p_x^{-1}\Delta_t^{-1} G + \p_y \p_x \Delta_t^{-2}\theta.
\end{align*}
Then the dissipation term yields a non-positive contribution by
\begin{align*}
  -\nu A(t) \|\nabla_t G\|_X^2\leq 0.
\end{align*}
For the contributions by
\begin{align*}
  \nu f\p_x  \cos(x) \p_y \phi +\p_x^2\Delta_t^{-1}(g \cos(x) \p_y\phi),
\end{align*}
we may very roughly estimate $f\nu \leq c$,
$\frac{1}{1+(\frac{\eta}{l}-t)^2}\leq 1$ and argue as for the estimate of $\dt
\theta$.
It thus only remains to discuss
\begin{align*}
  2 (\p_y-t\p_x)\p_x^3 \Delta_{t}^{-2} \mapsto \frac{2 (\frac{\eta}{l}-t)}{(1+(\frac{\eta}{l}-t)^2)},
  \end{align*}
  which in absolute value can be bounded by
  \begin{align*}
    2 \frac{1}{1+(\frac{\eta}{k}-t)^2}
  \end{align*}
  and hence can also be absorbed by the decay of $A(t)$.

In conclusion, we have shown that $E(t)$ indeed is non-increasing, which
completes the proof.  
\end{proof}

We next turn to studying the main resonance mechanism for times $t \in I_k$ with
\begin{align*}
  \frac{c\eta}{k^3}\pi \geq 1.
\end{align*}
Here we derive a slightly suboptimal upper bound of possible growth by a factor
\begin{align*}
  1+ \frac{c\eta}{k^3}\pi \leq 2 \frac{c\eta}{k^3}\pi,
\end{align*}
which then implies a total growth bound by
\begin{align*}
  \prod_{k=1}^{k_0} 2 \frac{c\eta}{k^3}\pi \leq \exp(\sqrt[3]{2 c\eta}).
\end{align*}
In Section \ref{sec:blow-up} we show that for special initial data this
growth is attained up to loss of factor in the exponent.
We remark that in the Euler equations or Vlasov-Poisson equations
\cite{dengZ2019,zillinger2020landau} one can explicitly construct initial data
generating desired data at a later time $t_{k_0}$ by inverting the time direction. As the
Boussinesq equations include viscous dissipation this is not possible in the
present setting and we instead have to invest considerable effort to
characterize which data can be generated at time $t_{k_0}$ starting from
suitable initial data.

\subsection{The Resonance Mechanism}
\label{sec:G}

In this section we study the norm inflation mechanism on the time interval
$I_k$, when
\begin{align*}
  1\leq \pi \frac{c\eta}{k^3}\leq \pi c\eta.
\end{align*}
Since this factor might be very large, we cannot allow for rough, Gronwall-type
bound of growth by
\begin{align*}
  \exp(\pi \frac{c\eta}{k^3}).
\end{align*}
Instead, we show that, as sketched for the model problem of Section
\ref{sec:wave}, only the neighbors of the mode $k$ grow by a factor at most
\begin{align*}
  2 \pi \frac{c\eta}{k^3},
\end{align*}
and all other modes only change mildly.

The main estimates of this section are summarized in the following theorem.
\begin{theorem}[Resonance Mechanism]
  \label{theorem:mechanism}
  Let $0<c<0.01$ be as in Theorem \ref{theorem:main} and let $\eta \gtrsim
  c^{-1}$ be given and $k\in \N$ be such that $c\frac{\eta}{k^3}\gtrsim 1$.
  Then for any data $\theta^{\star}, G^\star \in X$ prescribed at the time
  $t_k=\frac{1}{2} (\frac{\eta}{k+1}+ \frac{\eta}{k-1})$ the corresponding
  solution of the wave perturbation equation
  \eqref{eq:simplewave2} satisfies
  \begin{align*}
    \|\theta(t) - \theta^{\star}\|_X \leq  2 c \frac{\eta}{k^3}\pi (\|\theta^{\star}\|_X + \|G^{\star}\|_{X})
  \end{align*}
  and
  \begin{align*}
    \|G(t)\|_X \leq 2 c \frac{\eta}{k^3}\pi ( \|\theta^{\star}\|_X + \|G^{\star}\|_{X}).
  \end{align*}
  for all $t_k \leq t \leq t_{k-1}$.

  Moreover,  the following mode-wise bounds hold:
  \begin{align*}
    |\theta_{k\pm 1}(t_{k-1}) - \theta_{k \pm 1}(t_k) - c\frac{\eta}{k^3}\pi \theta_{k}(t_k)|  &\leq \frac{1}{k} c \frac{\eta}{k^3}\pi (\|\theta^{\star}\|_X + k \|G^{\star}\|_{X}), \\
    |\theta_{k}(t_{k-1})-\theta_k(t_k)| &\leq c (\frac{\eta}{k^2})^{-1}\pi (\|\theta^{\star}\|_X + k \|G^{\star}\|_{X})
  \end{align*}
  and for all $l \not \in \{k-1,k,k+1\}$ it holds that
  \begin{align*}
    |\theta_l(t_{k-1}) - \theta_l(t_k)| \leq c (\frac{\eta}{k^2})^{-1} \pi (\|\theta^{\star}\|_X + \|G^{\star}\|_{X}).
  \end{align*}
\end{theorem}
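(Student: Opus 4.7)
The plan is to work in Fourier variables with $\eta$ fixed, which reduces \eqref{eq:simplewave2} to a nearest-neighbor ODE system for the mode sequences $(\theta_l(t), G_l(t))_{l \in \Z \setminus \{0\}}$; since the only $x$-dependent coefficient is $\cos(x)$, couplings arise only between $l$ and $l \pm 1$. After this reduction the evolution of $\theta_l$ takes the schematic form
\begin{align*}
\dt \theta_l = \sum_{l' \in \{l-1, l+1\}} \bigl( c_{l'}(t)\, \theta_{l'} + d_{l'}(t)\, G_{l'} \bigr),
\end{align*}
with $|c_{l'}(t)| \sim c\, \tfrac{\eta}{(l')^3} (1 + (\eta/l' - t)^2)^{-2}$ and $|d_{l'}(t)| \sim c\, \tfrac{\eta}{(l')^3} (1 + (\eta/l' - t)^2)^{-1}$, and the evolution of $G_l$ is analogous up to the dissipation $\nu \Delta_t$ and the source terms $2\p_x^3(\p_y - t\p_x)\Delta_t^{-2}\theta$, $\nu f \p_x \cos(x)\p_y \phi$, and $\p_x^2 \Delta_t^{-1}(\p_t \theta)$. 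The decisive observation is that on $I_k$ only the coefficient at $l' = k$ is resonant, with $\int_{I_k} |c_k(s)|\, ds \approx c \eta/k^3 \cdot \pi$, while for every $l' \neq k$ the non-resonance estimate $|s - \eta/l'| \geq \tfrac14 \max(\eta/k^2, \eta/(l')^2)$ from Definition \ref{defi:Ik} yields $\int_{I_k} (|c_{l'}| + |d_{l'}|) \lesssim c (\eta/k^2)^{-1}/|l'|$, which is small by the assumption $c \eta/k^3 \gtrsim 1$.

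With this dichotomy in hand, I would first prove the two global $X$-norm bounds by a bootstrap in $C(I_k; X)$. Writing $\theta_l(t) - \theta_l^\star$ as a Duhamel integral from $t_k$ and $G_l(t)$ as a Duhamel integral against the heat semigroup associated with $\nu \Delta_t$ (which is contractive in $X$), the pointwise coefficient bounds combined with Minkowski's inequality and the shift-stability of $X$ from Definition \ref{defi:X} (so that the $l \mapsto l \pm 1$ shift costs at most a factor $2$) produce
\begin{align*}
\|\theta(t) - \theta^\star\|_X + \|G(t)\|_X \lesssim c\, \tfrac{\eta}{k^3}\, \pi \, \sup_{s \in I_k} (\|\theta(s)\|_X + \|G(s)\|_X).
\end{align*}
Bootstrapping this estimate against the base state $\theta^\star, G^\star$ closes the global bounds with the stated prefactor $2 c \eta/k^3 \cdot \pi$.

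For the mode-wise bounds I would isolate the dominant contribution in each integral equation. In $\theta_{k\pm 1}(t_{k-1}) - \theta_{k\pm 1}(t_k)$ the only resonant source term is $\int_{t_k}^{t_{k-1}} c_k(s)\, \theta_k(s)\, ds$; the global bound applied at mode $k$ shows that $|\theta_k(s) - \theta_k^\star|$ is itself of order $c(\eta/k^2)^{-1}\pi (\|\theta^\star\|_X + k \|G^\star\|_X)$ throughout $I_k$, because the source coefficients $c_{k\pm 1}, d_{k\pm 1}$ feeding into $\dot\theta_k$ are purely off-resonant. Replacing $\theta_k(s)$ by the constant $\theta_k^\star$ and using $\int_{I_k} c_k \approx c\eta/k^3 \cdot \pi$ then produces the main term $c\eta/k^3 \pi \cdot \theta_k^\star$ together with an error whose $\theta^\star$ part carries the gain $1/k$ and whose $G^\star$ part absorbs the $d_k G_k$ contribution (which has the same resonant profile but one additional factor of $1/k$ in its amplitude). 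The stated bound for $\theta_k(t_{k-1})-\theta_k(t_k)$ and for off-resonant $l \notin \{k-1,k,k+1\}$ reduces to integrating only off-resonant coefficients, hence to the $c(\eta/k^2)^{-1}\pi$ factor.

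The main obstacle will be the $G$-equation. The symbol of $2\p_x^3(\p_y - t\p_x) \Delta_t^{-2}$ at the resonant mode $k$ is the odd profile $2(\eta/k - t)/\bigl(k(1+(\eta/k-t)^2)^2\bigr)$, whose pointwise size is comparable to that of $c_k$ divided by $c\eta/k^2$; exploiting its oddness is essential to avoid losing the intended $G$-bound of order $c\eta/k^3 \pi \cdot (\|\theta^\star\|_X + \|G^\star\|_X)$. Moreover, the feedback term $\p_x^2 \Delta_t^{-1}(\p_t \theta)$ closes a loop between $G$ and $\theta$ that must be opened without deteriorating the bootstrap. I expect the cleanest approach is to integrate by parts in time against the anti-derivative of the odd symbol (trading a factor $\eta/k - t$ for $1/k^2$), and to absorb the $\p_t\theta$ contribution by treating it as a zeroth-order perturbation of the good unknown itself, in the spirit of Lemma \ref{lemma:goodunknown}. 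Combined with the contractivity of the heat evolution in $X$ and the off-resonance bounds on $\nu f \p_y \phi$, this should yield the $G$-estimate in the stated form and thereby close the coupled bootstrap.
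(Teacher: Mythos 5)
Your reduction to the nearest-neighbor Fourier system and your resonant/non-resonant dichotomy for the time-integrated coefficients match the paper's starting point (its Lemmas \ref{lemma:coeffestimates} and \ref{lemma:coefficientsForG}). However, the step on which everything else rests — the global $X$-bound obtained by a ``bootstrap in $C(I_k;X)$'' — has a genuine gap. The inequality you propose to close,
\begin{align*}
\|\theta(t)-\theta^\star\|_X+\|G(t)\|_X \lesssim c\,\tfrac{\eta}{k^3}\,\pi\,\sup_{s\in I_k}\bigl(\|\theta(s)\|_X+\|G(s)\|_X\bigr),
\end{align*}
cannot be bootstrapped or absorbed, because in the regime of the theorem the prefactor $c\eta\pi/k^3$ is at least of order one and possibly enormous; a Gronwall-type iteration of it yields $\exp(c\eta\pi/k^3)$, not the linear factor $2c\eta\pi/k^3$. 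The paper explicitly flags this: the whole point of Section \ref{sec:G} is that one may \emph{not} use a rough norm estimate, but must show that only the neighbors $k\pm1$ inherit the factor $c\eta/k^3$ from the single resonant coefficient, while mode $k$ itself and all other modes move only by small, non-resonant amounts. That structural information cannot be recovered from an inequality in which the resonant gain multiplies the full $X$-norm supremum, and your mode-wise bounds then lean on this global bound (``the global bound applied at mode $k$ shows \dots''), so the argument as organized is circular. The paper instead decomposes the data by linearity into single Fourier modes, runs a weighted $\ell^\infty$ bootstrap (Propositions \ref{prop:res} and \ref{prop:nonres}) with weights that are geometrically decaying with ratio $c(\eta/k^2)^{-2}$ except for exactly one jump of size $c\eta/k^3$ across the resonant step, and only afterwards assembles the $X$-estimate via Schur's test on the resulting kernel. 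Some equivalent mode-wise device is indispensable.

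A secondary point: the obstacle you single out in the $G$-equation is not the real one for this theorem. The resonant source $2\p_x^3(\p_y-t\p_x)\Delta_t^{-2}\theta$ needs no exploitation of oddness or integration by parts in time for the upper bounds; the paper simply bounds $\int_{\R}\bigl|2(\tfrac{\eta}{k}-t)(1+(\tfrac{\eta}{k}-t)^2)^{-2}\bigr|\,dt\le 2$, which is harmless because $G_k$ is only required to stay $O(1)$ relative to $\theta_k$ while the admissible bound is the large factor $2c\eta\pi/k^3$. Likewise $\p_x^2\Delta_t^{-1}\p_t\theta$ is handled by substituting the $\theta$-equation and using that when $k$ is resonant its neighbors are not (estimates \eqref{eq:Gthetac}, \eqref{eq:GGd}). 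The sign structure of the odd symbol and the dissipation only become essential for the \emph{lower} bounds of Section \ref{sec:blow-up}, where cancellation between the $G_k$ and $\theta_k$ contributions must be ruled out — not for the upper bounds claimed in Theorem \ref{theorem:mechanism}.
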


We emphasize that the present theorem only provides an upper bound on growth.
In Section \ref{sec:blow-up} we will show that there indeed exists data
saturating this growth (up to a factor).
In particular, we construct global in time solutions exhibiting norm inflation
due to \emph{echo chains}. Using these solutions as building blocks we then
construct a critical class of initial data exhibiting blow-up in Sobolev regularity.

The proof of Theorem \ref{theorem:mechanism} concludes in Subsection \ref{sec:mechanism} and builds on multiple steps which are formulated as
propositions and lemmas.
Unlike the Lyapunov energy approach of Sections \ref{sec:small} and \ref{sec:intermediate} we here
iteratively construct solutions in weighted $\ell^\infty$ spaces, which we will
then use to deduce analogous estimates in the space $X$.

Before stating these estimates we introduce an integral formulation of the
equations and collect estimates on time integrals of the coefficient functions.
The interplay of these estimates then determines admissible weights.

The equations \eqref{eq:simplewave2} in Fourier variables read
\begin{align}
  \label{eq:thetaeq}
  \begin{split}
  \dt \theta_l = \frac{g}{2\nu} \frac{\eta}{(l+1)^3} \frac{1}{(1+(\frac{\eta}{l+1}-t)^2)^2} \theta_{l+1}
  + \frac{g}{2\nu} \frac{\eta}{(l-1)^3} \frac{1}{(1+(\frac{\eta}{l-1}-t)^2)^2} \theta_{l-1}\\
  + \frac{g}{2\nu} \frac{\eta}{(l+1)^3} \frac{1}{1+(\frac{\eta}{l+1}-t)^2} G_{l+1}\\
  + \frac{g}{2\nu} \frac{\eta}{(l-1)^3} \frac{1}{1+(\frac{\eta}{l-1}-t)^2} G_{l-1}\\
  =: c_{l}^{+} \theta_{l+1} + c_{l}^{-} \theta_{l-1} + d_{l}^{+} G_{l+1} + d_{l}^{-} G_{l-1}.
  \end{split}
\end{align}
and
\begin{align}
  \label{eq:Geq}
  \begin{split}
    \dt G_{l} &= -\nu (l^2+(\eta-lt)^2) G_l + f\frac{\nu}{g} il (d_l^{+} G_{l+1} + d_l^{-1} G_{l-1})\\
    &+ f \frac{\nu}{g}il (c_{l}^+ \theta_{l+1} + c_{l}^{-1}\theta_{l-1})\\
    &+ 2 \frac{(\frac{\eta}{l}-t)}{(1+(\frac{\eta}{l}-t)^2)^2} \theta_{l}\\
  &+ \frac{1}{1+(\frac{\eta}{l}-t)^2}\left( c_{l}^{+} \theta_{l+1} + c_{l}^{-} \theta_{l-1}\right) \\
  &+ \frac{1}{1+(\frac{\eta}{l}-t)^2} \left(d_{l}^{+} G_{l+1} + d_{l}^{-} G_{l-1} \right),
  \end{split}
\end{align}
where we use $G_{l}$ and $\theta_l$ to denote the Fourier coefficients.
We recall that, as a simplification, throughout this article we assume that the
$x$-averages $\theta_{0}, G_0$ identically vanish. Hence, these equations should
be interpreted as being valid for $l \in \Z \setminus\{0\}$ and all terms
involving modes $G_0, \theta_0$ are trivial.

These differential equations are  equivalent to the following integral equations:
\begin{align}
\label{eq:integralequations}  
  \begin{split}
   \theta_l(T)- \theta_l(t_k) &= \int_{t_k}^T c_{l}^{+} \theta_{l+1} + c_{l}^{-} \theta_{l-1} + d_{l}^{+} G_{l+1} + d_{l}^{-} G_{l-1} dt, \\
   G_{l}(T)- &\exp\left(-\nu\int_{t_k}^{T}l^2+(\eta-lt)^2 dt\right) G_{l}(t_k)\\
  &= \int_{t_k}^{T}\exp(-\nu\int_{t}^{T}l^2+(\eta-l\tau)^2 d\tau) 2 \frac{(\frac{\eta}{l}-t)}{(1+(\frac{\eta}{l}-t)^2)^2} \theta_{l} dt \\
  &\quad + \int_{t_k}^{T}\exp(-\nu\int_{t}^{T}l^2+(\eta-l\tau)^2 d\tau) f il \frac{\nu}{g} (d_l^{+} G_{l+1} + d_l^{-} G_{l-1}) dt \\
  &\quad + \int_{t_k}^{T}\exp(-\nu\int_{t}^{T}l^2+(\eta-l\tau)^2 d\tau) f il \frac{\nu}{g} (c_l^{+} \theta_{l+1} + c_l^{-} \theta_{l-1}) dt \\
  & \quad + \int_{t_k}^{T}\exp(-\nu\int_{t}^{T}l^2+(\eta-l\tau)^2 d\tau) \frac{1}{1+(\frac{\eta}{l}-t)^2}\left( c_{l}^{+} \theta_{l+1} + c_{l}^{-} \theta_{l-1}\right) dt \\
  & \quad + \int_{t_k}^{T}\exp(-\nu\int_{t}^{T}l^2+(\eta-l\tau)^2 d\tau) \frac{1}{1+(\frac{\eta}{l}-t)^2} \left(d_{l}^{+} G_{l+1} + d_{l}^{-} G_{l-1} \right) dt.
  \end{split}
\end{align}
In the following we will show by means of a bootstrap argument that the modes $G_{l}, \theta_{l}$ when
adjusted with a suitable weight function remain bounded uniformly in time.

For easier reference and to motivate our choice of weight function the $L^1_t$ estimates on the coefficients required for the
control of $\theta$ are collected in the following lemma.
\begin{lemma}
  \label{lemma:coeffestimates}
  Let $k, \eta, c$  be as in Theorem \ref{theorem:mechanism} and let $t \in I_k$.
  Let further $c_l^{\pm}, d_{l}^{\pm}$ be defined by equation \eqref{eq:thetaeq}:
  \begin{align*}
    c_{l}^{+}&= c \frac{\eta}{(l+1)^3} \frac{1}{(1+(\frac{\eta}{l+1}-t)^2)^2},\\
    c_{l}^{-}&= c \frac{\eta}{(l-1)^3} \frac{1}{(1+(\frac{\eta}{l-1}-t)^2)^2},\\
    d_{l}^{+}&= c \frac{\eta}{(l+1)^3} \frac{1}{1+(\frac{\eta}{l+1}-t)^2},\\
    d_{l}^{-}&= c \frac{\eta}{(l-1)^3} \frac{1}{1+(\frac{\eta}{l-1}-t)^2}.
  \end{align*}
  Then it holds that
  \begin{align}
    \label{eq:resonantcoeffs}
    \begin{split}
    \int c_{k\pm 1}^{\mp} &= c \frac{\eta}{k^3}\int \frac{1}{(1+(\frac{\eta}{k}-t)^2)^2} ,\\
    \int d_{k\pm 1}^{\mp} &= c \frac{\eta}{k^3}\int \frac{1}{1+(\frac{\eta}{k}-t)^2},
  \end{split}
  \end{align}
  and
  \begin{align*}
    \int_{\R} \frac{1}{(1+(\frac{\eta}{k}-t)^2)^2}  = \frac{\pi}{2}, \\
    \int_{\R} \frac{1}{1+(\frac{\eta}{k}-t)^2}= \pi.
  \end{align*}
  Since $c \frac{\eta}{k^3}$ is possibly very large,
  we call these the \emph{resonant} cases.

  For the remaining \emph{non-resonant} cases, the following bounds hold:
  \begin{align}
    \label{eq:nonresonantcoeffs}
    \begin{split}
      \int_{I_k} c_{l}^{\pm} &\leq \frac{4c}{k} (\frac{\eta}{k^2})^{-2},\\
    \int_{I_k} d_{l}^{\pm} &\leq  \frac{4c}{k}.
    \end{split}
  \end{align}
\end{lemma}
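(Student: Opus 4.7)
The plan is to prove the three groups of claims in order: the two resonant identities, the two explicit integrals over $\R$, and then the non-resonant bounds on $I_k$.

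For the resonant identities, I would simply substitute $l = k\mp 1$ into the definitions of $c_l^{\pm}$ and $d_l^{\pm}$. For instance, $c_{k-1}^{+}$ has $l+1 = k$ in the denominator, so it equals $c\frac{\eta}{k^3}\frac{1}{(1+(\frac{\eta}{k}-t)^2)^2}$, and the same holds for $c_{k+1}^{-}$ (where $l-1 = k$). The factor $c\frac{\eta}{k^3}$ is independent of $t$ and pulls out of the integral, giving \eqref{eq:resonantcoeffs}. The two explicit integrals over $\R$ are elementary: after the substitution $s = \frac{\eta}{k}-t$ one has $\int_\R (1+s^2)^{-1}ds = \pi$, and $\int_\R(1+s^2)^{-2}ds = \pi/2$ follows either by differentiation under the integral sign with respect to a scaling parameter or by the trigonometric substitution $s = \tan\vartheta$.

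For the non-resonant bounds I would first record the length estimate $|I_k| = t_{k-1}-t_k = \frac{1}{2}(\frac{\eta}{k-1}-\frac{\eta}{k+1}) = \frac{\eta}{k^2-1}\leq 2\frac{\eta}{k^2}$, and the separation estimate already stated after Definition~\ref{defi:Ik}: for $t\in I_k$ and any $l \neq k$,
\[
\Bigl|t - \frac{\eta}{l}\Bigr| \geq \frac14\max\!\Bigl(\frac{\eta}{k^2},\frac{\eta}{l^2}\Bigr).
\]
Applied with $l$ replaced by $l\pm 1$ (and remembering that the non-resonant situation for $c_l^{+},d_l^{+}$ means $l+1\neq k$, and similarly $l-1\neq k$ for the $-$ versions), this gives
\[
\frac{1}{1+(\frac{\eta}{l\pm 1}-t)^2} \leq \frac{16}{\max(\eta/k^2,\eta/(l\pm 1)^2)^2}.
\]
Multiplying by $c\frac{\eta}{(l\pm 1)^3}$, taking sup over $I_k$, and then multiplying by $|I_k|\leq 2\eta/k^2$ gives the bound on $\int_{I_k} d_l^{\pm}$. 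I would split into the cases $|l\pm 1|\geq k$ and $|l\pm 1|< k$: in the first case the max equals $\eta/k^2$ and the estimate reduces to $\frac{32 c k^2}{(l\pm 1)^3}\leq \frac{32 c}{k}$; in the second case the max equals $\eta/(l\pm 1)^2$ and the estimate reduces to $\frac{32 c (l\pm 1)}{k^2}\leq \frac{32 c}{k}$. Adjusting constants one arrives at the claimed $\frac{4c}{k}$. The same argument applied to the squared denominator produces the extra factor $(\eta/k^2)^{-2}$ in the $c_l^{\pm}$ bound.

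The only delicate point is bookkeeping: one must check that the worst case for the non-resonant $c_l^{+}$ is exactly when $l+1$ is as close to $\pm k$ as possible without being equal, and similarly for the other three coefficients, so that the lower bound $\frac14\max(\eta/k^2,\eta/(l\pm1)^2)$ does not degenerate. This is guaranteed once we assume $l\pm 1\neq k$ (equivalently $l$ is outside the resonant window), and follows from the definition of $t_k$ as the midpoint between consecutive resonant times. I expect this index-tracking and the splitting into the two $|l\pm 1|$ regimes to be the only real source of care in the proof; the actual inequalities are routine.
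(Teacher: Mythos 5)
Your proposal is correct and follows essentially the same route as the paper: substitute $l=k\mp 1$ for the resonant identities, compute the two explicit integrals, and for the non-resonant terms combine the separation bound $|t-\frac{\eta}{l\pm1}|\geq\frac14\max(\frac{\eta}{k^2},\frac{\eta}{(l\pm1)^2})$ with the interval length $|I_k|\leq 2\frac{\eta}{k^2}$, splitting into $|l\pm1|\geq k$ and $|l\pm1|<k$. The only caveat is that your chain of inequalities yields constants like $\frac{32c}{k}$ rather than the stated $\frac{4c}{k}$, but the paper's own proof is equally loose on this point and the precise constant is immaterial downstream since $c<0.001$.
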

We stress that unlike in Section \ref{sec:small} here $c\frac{\eta}{k^3}$ can be very large.
Hence, the integrals in \eqref{eq:resonantcoeffs} can be very large.
On the other hand the integrals in \eqref{eq:nonresonantcoeffs} are quite small
and the integral over $c_{l}^{\pm}$ becomes even smaller the larger
$\frac{\eta}{k^2}$ is.
As we discuss after the following lemma, these large resonant coefficients and
small non-resonant coefficients determine the structure of our choice of weight function.

The estimates required to control the evolution of $G$ are also collected in a lemma.
\begin{lemma}
  \label{lemma:coefficientsForG}
  Let $k,\eta,c$ be as in Theorem \ref{theorem:mechanism} and let $t \in I_k$
  and consider the integrals stated in \eqref{eq:integralequations}.

  Then for the \emph{resonant case} $l=k$ it holds that 
 \begin{align}
   \label{eq:Gthetares}
  \int_{t_k}^{T}\exp(-\nu\int_{t}^{T}k^2+(\eta-k\tau)^2 d\tau) 2 \frac{(\frac{\eta}{k}-t)}{(1+(\frac{\eta}{k}-t)^2)^2} \leq 2.
 \end{align}
 For the \emph{non-resonant} cases $l\neq k$ we instead estimate
 \begin{align}
   \label{eq:Gthetanonres}
    \int_{t_k}^{T}\exp(-\nu\int_{t}^{T}l^2+(\eta-l\tau)^2 d\tau) \left| 2 \frac{(\frac{\eta}{l}-t)}{(1+(\frac{\eta}{l}-t)^2)^2} \right| dt\leq 2 (\frac{\eta}{k^2})^{-2}.
 \end{align}

 For the coefficient functions involving $f$ we control: 
 \begin{align}
   \label{eq:Gf}
   \begin{split}
     \int_{t_k}^t f \frac{\nu}{g}il c_{l}^\pm  &\leq
     \begin{cases}
       c (\frac{\eta}{k^2})^{-1} &, \text{ if } l=k\mp 1,\\
       c (\frac{\eta}{k^2})^{-4} &, \text{ else}.
     \end{cases}
     ,\\
     \int_{t_k}^t f \frac{\nu}{g} il d_{l}^\pm &\leq \begin{cases}
       c (\frac{\eta}{k^2})^{-1} &, \text{ if } l=k\mp 1,\\
       c (\frac{\eta}{k^2})^{-2} &, \text{ else}.
     \end{cases}. 
   \end{split}
 \end{align}

 We further estimate
 \begin{align}
   \label{eq:Gthetac}
   \int_{t_k}^{T}\exp(-\nu\int_{t}^{T}l^2+(\eta-l\tau)^2 d\tau) \frac{1}{1+(\frac{\eta}{l}-t)^2}c_{l}^{\pm} \leq
   \begin{cases}
     32 \frac{c}{k} (\frac{\eta}{k^2})^{-4}, & \text{ if } l\neq k \neq l\pm 1,\\
     \frac{c}{k} (\frac{\eta}{k^2})^{-3} 16 \pi, & \text{ if } l=k, \\
    \frac{c}{k} (\frac{\eta}{k^2})^{-1} \frac{\pi}{2}, & \text{ if } l=k \pm 1.
   \end{cases}
 \end{align}
Finally, we control
 \begin{align}
   \label{eq:GGd}
   \int_{t_k}^{T}\exp(-\nu\int_{t}^{T}l^2+(\eta-l\tau)^2 d\tau)\frac{1}{1+(\frac{\eta}{l}-t)^2} d_{l}^{\pm} \leq
      \begin{cases}
     \frac{c}{k} (\frac{\eta}{k^2})^{-2}, & \text{ if } l\neq k \neq l \pm 1,\\
     \frac{c}{k} (\frac{\eta}{k^2})^{-1} \pi, & \text{else}.
   \end{cases}
 \end{align}
\end{lemma}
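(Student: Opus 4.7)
My overall plan is direct computation, separating each integrand into a resonant factor (whose frequency index matches $k$) and a non-resonant factor, and discarding the exponential dissipation factor $\exp(-\nu \int_t^T l^2 + (\eta - l\tau)^2 \, d\tau) \leq 1$ throughout; since every bound claimed in the lemma is independent of $\nu$, the dissipation plays only a safety role here. Every resonant estimate reduces via the substitution $s = \eta/l - t$ to one of the three elementary integrals
\[
\int_{\mathbb{R}} \frac{ds}{1+s^2} = \pi, \qquad \int_{\mathbb{R}} \frac{ds}{(1+s^2)^2} = \frac{\pi}{2}, \qquad \int_{\mathbb{R}} \frac{2|s|\, ds}{(1+s^2)^2} = 2,
\]
while every non-resonant estimate rests on the separation property $|\eta/l - t| \geq \tfrac{1}{4}\max(\eta/k^2, \eta/l^2)$ valid on $I_k$ for $l \neq k$ (Definition \ref{defi:Ik}), which converts each factor $(1+(\eta/l-t)^2)^{-m}$ into a pointwise multiple of $(\eta/k^2)^{-2m}$.

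For the resonant estimate \eqref{eq:Gthetares}, I observe that $2(\eta/k - t)/(1+(\eta/k - t)^2)^2 = \frac{d}{dt}\bigl[(1+(\eta/k-t)^2)^{-1}\bigr]$, so after discarding the exponential the integral telescopes to at most $1$; allowing the exponential back in and passing to absolute values yields the stated bound $2$ via the third elementary integral above. For \eqref{eq:Gthetanonres}, the separation property gives a pointwise bound by a constant times $(\eta/k^2)^{-3}$, and since $|I_k| \sim \eta/k^2$ the integration supplies the missing factor of $\eta/k^2$, producing $(\eta/k^2)^{-2}$.

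The estimates \eqref{eq:Gf} do not involve the exponential factor. Here I use $f \leq c/(1+t^2)$ from Lemma \ref{lemma:boundf} together with $\nu/g = 1/c$ to bound $|f (\nu/g) il| \leq |l|/(1+t^2)$; on $I_k$ this is $\lesssim k (\eta/k)^{-2} = k^3/\eta^2$, and combining with the resonant/non-resonant control of $c_l^\pm, d_l^\pm$ from Lemma \ref{lemma:coeffestimates} delivers both cases. The product bounds \eqref{eq:Gthetac}--\eqref{eq:GGd} split into three sub-cases depending on whether the resonance sits in the kernel $(1+(\eta/l - t)^2)^{-1}$ (i.e.\ $l = k$), in the coefficient $c_l^\pm$ or $d_l^\pm$ (i.e.\ $l \pm 1 = k$), or in neither; in each sub-case one factor contributes an integrated mass $\pi$ or $\pi/2$ while the other contributes pointwise non-resonant decay by a power of $\eta/k^2$.

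The main obstacle is bookkeeping rather than analysis: keeping straight which index is resonant in each product, how many powers of $(\eta/k^2)^{-1}$ are extracted from each non-resonant factor (a simple kernel costs $(\eta/k^2)^{-2}$, a squared kernel costs $(\eta/k^2)^{-4}$), and the numerical prefactors $c\eta/(l \pm 1)^3 \sim (c/k)\cdot(\eta/k^2)$ coming from the definitions of $c_l^\pm$ and $d_l^\pm$. Matching these to the exact right-hand sides in \eqref{eq:Gthetanonres}--\eqref{eq:GGd}, especially in the mixed cases of \eqref{eq:Gthetac}, requires careful accounting but introduces no new ideas beyond the three elementary integrals and the separation property above.
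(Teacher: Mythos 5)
Your proposal is correct and follows essentially the same route as the paper's proof: bound the dissipative exponential by $1$, reduce resonant factors to the elementary integrals of $(1+s^2)^{-1}$, $(1+s^2)^{-2}$ and $2|s|(1+s^2)^{-2}$, use the separation $|\frac{\eta}{l}-t|\geq \frac{1}{4}\max(\frac{\eta}{k^2},\frac{\eta}{l^2})$ together with $|I_k|\lesssim \frac{\eta}{k^2}$ for non-resonant factors, and exploit $f\frac{\nu}{g}\lesssim \frac{1}{1+t^2}\approx (\frac{k}{\eta})^{2}$ on $I_k$ for the terms involving $f$. The only caveat is the same bookkeeping the paper also has to do: for $|l|$ far from $k$ the factor $|l|$ in $f\frac{\nu}{g}il$ must be absorbed by the $(l\pm 1)^{-3}$ decay of $c_l^{\pm}, d_l^{\pm}$ (respectively by the stronger separation when $l\pm 1< k$), which your final paragraph correctly identifies as accounting rather than a new idea.
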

We stress that the contributions by resonant frequencies as stated in
\eqref{eq:Gthetares} while not large in an absolute sense, are not small.
In contrast all other coefficients provide a gain of negative powers of
$(\frac{\eta}{k^2})$.
We postpone the proof of the coefficient estimates formulated in Lemmas
\ref{lemma:coefficientsForG} and \ref{lemma:coeffestimates} to Subsection
\ref{sec:coeffestimates}.

The estimates of Lemmas \ref{lemma:coeffestimates} and
\ref{lemma:coefficientsForG} suggest that if initially 
\begin{align*}
  \theta_{l}(t_k)= \delta_{lk}, \ G_l(t_k)=0,
\end{align*}
then for $t_k< t< t_{k-1}$ one should expect the following \emph{heuristic} bounds:
\begin{align}
  \label{eq:expectedweights}
  \theta_k(T)& \approx 1, \quad &G_k(T)&\leq 1, \notag \\
  \theta_{k-1}(T)& \approx c \frac{\eta}{k^3}, \quad &G_{k-1}(T) & \leq \frac{c}{k} (\frac{\eta}{k^2})^{-1}, \\
  \theta_{k-1-j} (T) & \leq c \frac{\eta}{k^3} (c(\frac{\eta}{k^2})^{-2})^j , \quad &G_{k-1-j}(T) & \leq \frac{c}{k} (\frac{\eta}{k^2})^{-1}(c(\frac{\eta}{k^2})^{-2})^j, \notag
\end{align}
for $j \in \N$ and analogously for $\theta_l, G_l$ with $l>k$.

The following Proposition \ref{prop:res} proves that this heuristic is indeed
valid and establishes error bounds on the heuristic approximations.
The cases of initial data concentrated on a different mode
\begin{align*}
\theta_l(t_k)=\delta_{k_0 l}, G_l(t_k)=0,
\end{align*}
or
\begin{align*}
\theta_l(t_k)=0, G_l(t_k)=  \delta_{k_0 l},
\end{align*}
are considered in Proposition \ref{prop:nonres}.
In Subsection \ref{sec:mechanism} we then show how to pass from weighted
$\ell^\infty$ estimates to bounds in $X$ and thus establish Theorem
\ref{theorem:mechanism}.

\begin{prop}
  \label{prop:res}
  Let $\eta, c, k$ be as in Theorem \ref{theorem:mechanism}.
  Consider the evolution equation \eqref{eq:integralequations} with 
  \begin{align*}
    \theta_l(t_k)=\delta_{lk}, G_l(t_k)= 0
  \end{align*}
  for all $l\in \Z$.
  Let further $c_{l}^\pm$, $d_{l}^{\pm}$ be defined as in Lemma \ref{lemma:coeffestimates}.
  
  Then the following estimates hold for all times $t_k \leq T \leq t_{k-1}$:
  \begin{enumerate}[start=1, label={(B\arabic*)}]
  \item \label{item:B1} For the resonant mode $\theta_k$ it holds that
    \[|\theta_k(T)-1| \leq \frac{10 c}{k} (\frac{\eta}{k^2})^{-1}.\] 
  \item \label{item:B2} For $l \in \{k-1,k+1\}$ it holds that
    \[\left| \theta_l(T)- \int_{t_k}^Tc_{l}^{\mp} dt - \int_{t_k}^T d_{l}^{\mp}
       G_k(t)dt \right|\leq \frac{0.5}{k} c \frac{\eta}{k^3}.\]
  \item\label{item:B3} For all $l\not \in \{k-1,k,k+1\}$ it holds that
    \[|\theta_l(T)| \leq c \frac{\eta}{k^3} (c (\frac{\eta}{k^2})^{-2})^{|l-k|+1}.\]
  \item \label{item:B4} For the mode $G_k$ it holds that
    \[|G_k(T)-  \int_{t_k}^t \exp(-\nu\int_{t_k}^t k^2 +(\eta-ks)^2 ds)
      2\frac{(\frac{\eta}{k}-t)}{(1+(\frac{\eta}{k}-t)^2)^2} dt| \leq
      \frac{2}{k}.\]
  \item\label{item:B5} For all $l\neq k$ it holds that
    \[|G_l(T)| \leq  \frac{\eta}{k^3} (c (\frac{\eta}{k^2})^{-2})^{|l-k|}.\]
  \end{enumerate}
  If we instead consider $\theta_l(t_k)=0$, $G_{l}(t_k)=
  \delta_{lk}$, then \ref{item:B3} and \ref{item:B5} still hold and
  \ref{item:B1}, \ref{item:B2}, \ref{item:B4} are replaced by
  \begin{itemize}
  \item [(B1')] It holds that
    \[|\theta_k(T)|\leq \frac{c}{k}.\]
  \item [(B2')] It holds that
    \[\left| \theta_l(T) - \int_{t_k}^T d_{l}^{\mp}
        G_k(t)dt \right|\leq \frac{0.5}{k} c \frac{\eta}{k^3}.\]
  \item [(B4')] For the mode $G_k$ it holds that
    \[|G_k(T)- \exp(-\nu\int_{t_k}^t k^2 +(\eta-ks)^2 ds)| \leq \frac{2}{k}.\]
  \end{itemize}
\end{prop}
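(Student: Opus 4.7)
The plan is to establish all of \ref{item:B1}--\ref{item:B5} and their primed variants via a bootstrap/Picard argument in a weighted $\ell^\infty$ space adapted to the heuristic sizes in \eqref{eq:expectedweights}. Concretely, for the first case I would introduce weights
\begin{align*}
w_k^\theta = 1, \quad w_{k \pm 1}^\theta = c\tfrac{\eta}{k^3}\pi, \quad w_l^\theta = c\tfrac{\eta}{k^3}\bigl(c(\tfrac{\eta}{k^2})^{-2}\bigr)^{|l-k|-1} \text{ for } |l-k|\geq 2,
\end{align*}
and analogously defined $w_l^G$ scaled by $\tfrac{1}{k}(\eta/k^2)^{-1}$, with $w_k^G$ comparable to the explicit integral appearing in \ref{item:B4}. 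Viewing the right-hand side of \eqref{eq:integralequations} as a linear affine map on such weighted sequences, I would show that it preserves a ball of radius $2$ in this norm, so the solution (unique by standard linear ODE theory) satisfies the claimed bounds with the explicit main terms extracted.

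The core input is the resonant/non-resonant split supplied by Lemmas \ref{lemma:coeffestimates} and \ref{lemma:coefficientsForG}: exactly one forcing term in each integral equation is resonant on $I_k$ (and hence of order unity or larger), while every other forcing term gains a factor $\tfrac{c}{k}(\eta/k^2)^{-\alpha}$ with $\alpha\geq 2$. I would proceed in the following order. First, for $\theta_k$, both neighbors $\theta_{k\pm 1}$ and $G_{k\pm 1}$ enter through coefficients that are non-resonant at the index $l=k$, so combining the weighted bounds on these modes with \eqref{eq:nonresonantcoeffs} directly yields \ref{item:B1}. Next, for $\theta_{k\pm 1}$ I would retain the two main contributions $\int c^{\mp}_l\,dt$ (obtained by replacing $\theta_k$ by $1$) and $\int d^{\mp}_l G_k(t)\,dt$ explicitly, and bound the remainder by $\sup|\theta_k-1|\int|c^{\mp}_l|\leq \tfrac{10c}{k}(\eta/k^2)^{-1}\cdot c(\eta/k^3)\pi$ plus the tail contributions from $\theta_{k\pm 2}, G_{k\pm 2}$; both sit comfortably below $\tfrac{0.5}{k} c(\eta/k^3)$, closing \ref{item:B2}. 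For $G_k$ the dominant term is the forcing by $\theta_k\approx 1$ through the kernel $2(\eta/k-t)/(1+(\eta/k-t)^2)^2$, bounded by $2$ via \eqref{eq:Gthetares}, while the remaining integrals from \eqref{eq:Gf}, \eqref{eq:Gthetac}, \eqref{eq:GGd} contribute at most $\tfrac{2}{k}$ after summation, giving \ref{item:B4}.

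For the tail bounds \ref{item:B3} and \ref{item:B5} on modes with $|l-k|\geq 2$ I would proceed by a geometric induction in $|l-k|$. Every coefficient appearing in the integral equation for such a mode is non-resonant on $I_k$, so a single step introduces a factor $c(\eta/k^2)^{-2}$ (or better), matching the geometric decay encoded in the weights. The dissipative factor $\exp(-\nu\int l^2+(\eta-l\tau)^2 d\tau)$ in the $G$-equations only improves matters and is bounded by $1$ for the purpose of this upper estimate. The dual case $\theta_l(t_k)=0,\ G_l(t_k)=\delta_{lk}$ is handled identically: $G_k$ now carries an additional $\exp(-\nu\int_{t_k}^T k^2+(\eta-ks)^2 ds)$ from the initial datum (and hence its $L^1$-in-time integral does not enlarge the estimate), the term $\int c^{\mp}_l \theta_k\,dt$ feeding $\theta_{k\pm 1}$ is absent, producing (B2'), and $\theta_k$ is only excited through the small $c/k$ channel in its own equation, giving (B1').

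The main obstacle is the genuine two-way coupling between $\theta$ and $G$ at the resonant scales: the integral $\int d^{\mp}_{k\pm 1} G_k(t)\,dt$ in the $\theta_{k\pm 1}$ equation is neither small in absolute terms nor subdominant to $\int c^{\mp}_{k\pm 1}\,dt$, since both are of order $c\eta/k^3$ on $I_k$ by \eqref{eq:resonantcoeffs}, and $G_k$ itself is of order unity by the analysis above. Thus $G_k$ cannot be treated as a perturbation; it must be retained explicitly in the statement of \ref{item:B2} (and (B2')), and the bootstrap only controls the error from replacing $\theta_k$ by $1$ in the $c$-piece and from non-resonant feedback by further neighbors. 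Once this explicit bookkeeping is in place, every other estimate reduces to multiplying a weighted bound by the appropriate $L^1_t$ coefficient norm from Lemmas \ref{lemma:coeffestimates}--\ref{lemma:coefficientsForG}, and the argument closes.
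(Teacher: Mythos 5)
Your proposal takes essentially the same route as the paper's proof: a weighted $\ell^\infty$ bootstrap built on the coefficient estimates of Lemmas \ref{lemma:coeffestimates} and \ref{lemma:coefficientsForG}, in which the resonant main terms in \ref{item:B2} and \ref{item:B4} are retained explicitly (rather than treated perturbatively) and every other coupling gains a factor of order $c(\tfrac{\eta}{k^2})^{-2}$ or $\tfrac{c}{k}$, with the primed case handled identically up to the extra exponential factor from the $G$ initial datum. The only cosmetic difference is how the argument is closed: you invoke an invariant-ball/Picard-iteration formulation for the Volterra system, while the paper first runs a local contraction in the rescaled unknowns $(c(\tfrac{\eta}{k^2})^{-2})^{|l-k|}\theta_l$, $(c(\tfrac{\eta}{k^2})^{-2})^{|l-k|}G_l$ and then a continuity/maximal-time bootstrap; both are legitimate ways to finish the same estimate.
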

These estimates quantify our heuristic estimates \eqref{eq:expectedweights}. In
particular, in addition to an upper bound, \ref{item:B2} also provides a lower
bound. This norm inflation mechanism then will form the core of our echo chain
construction of Section \ref{sec:proofofmain}.

\begin{proof}[Proof of Proposition \ref{prop:res}]
  In our proof we use \ref{item:B1}--\ref{item:B5} as bootstrap estimates.
  More precisely, our strategy is the following:
  \begin{itemize}
  \item We first show that these estimates hold at least on a small
    time interval $(t_k, T_{\star})$.
  \item Subsequently, we prove that on that time interval the estimates
  self-improve. That is, variants of \ref{item:B1}--\ref{item:B5} hold where the
  right-hand-side is improved by a factor $0.9$.
  \item Choosing $T_{\star}$ maximal with the property that
    \ref{item:B1}--\ref{item:B5} are satisfied, it follows that equality in
    these estimates is not attained. However, by local in time arguments this
    implies that if $T_{\star}$ were smaller than $t_{k-1}$, the estimates would
    remain valid at least for a small additional time.
    Since this contradicts the maximality of $T_{\star}$ it follows that
    $T_{\star}=t_{k-1}$, which concludes the proof.
  \end{itemize}
  
\underline{Establishing the initial bootstrap:}
We note that the right-hand-side estimates in \ref{item:B1}--\ref{item:B5}
include a power law $(c (\frac{\eta}{k^2})^{-2})^{|l-k|}$, which is not bounded below and thus at first
sight might seem problematic for local in time continuity results.
We thus instead consider the equivalent unknowns
\begin{align}
  \label{eq:equivunknowns}
  (c (\frac{\eta}{k^2})^{-2})^{|l-k|} \theta_l, (c (\frac{\eta}{k^2})^{-2})^{|l-k|}G_l,
\end{align}
where we note that at time $t_k$, these unknowns equal $\delta_{lk}$ and $0$, respectively.

We further observe that the equations \eqref{eq:integralequations} only include nearest
neighbor interactions and that the quotients
\begin{align*}
 \frac{(c (\frac{\eta}{k^2})^{-2})^{|l \pm 1 -k|}}{(c (\frac{\eta}{k^2})^{-2})^{|l-k|}}
\end{align*}
are bounded above and below.
Thus expressing the integral equations \eqref{eq:integralequations} with respect
to the unknowns \eqref{eq:equivunknowns} we observe that all integrands are
bounded uniformly in time (but may depend on $c$ and $\eta$).
Thus, choosing $T_{\star}$ such that $|T_{\star}-t_k|$ is sufficiently small, we
obtain a contraction mapping in $\ell^\infty$ and at least for a small time it
holds that
\begin{align*}
  \|(c (\frac{\eta}{k^2})^{-2})^{|l-k|} \theta_l -\delta_{lk} \|_{\ell^\infty} + \|(c (\frac{\eta}{k^2})^{-2})^{|l-k|} G_l\|_{\ell^{\infty}}
\end{align*}
is sufficiently small that \ref{item:B1}--\ref{item:B5} are satisfied.

Having established our initial bootstrap estimates, we now let $T_{\star}\leq t_{k-1}$ be
the maximal time such that these estimates hold. We then show that all bootstrap
estimates improve by a factor and thus $T_{\star}$ can only have been maximal if $T_{\star}=t_{k-1}$.

\underline{Improving \ref{item:B1}:}
Let $t_{k}\leq T \leq T_{\star}$ and consider the integral equation
\eqref{eq:integralequations} for $\theta_k(T)$.
Then by our choice of initial data it holds that
\begin{align*}
  \theta_k(T)- 1= \int_{t_k}^T c_{k}^{+} \theta_{k+1} + c_{k}^{-} \theta_{k-1} + d_{k}^{+} G_{k+1} + d_{k}^{-} G_{k-1} dt.
\end{align*}
Since $c_{k}^{\pm}$ and $d_{k}^{\pm}$ are non-resonant, by the estimates
\eqref{eq:nonresonantcoeffs} of Lemma \ref{lemma:coeffestimates} it follows that
\begin{align*}
  |\theta_k(T)-1| \leq \frac{4c}{k} (\frac{\eta}{k^2})^{-2} (\|\theta_{k+1}\|_{L^\infty} + \|\theta_{k-1}\|_{L^\infty})
  + \frac{4 c}{k} (\|G_{k+1}\|_{L^\infty} + \|G_{k-1}\|_{L^\infty}).
\end{align*}
By the bootstrap estimate \ref{item:B5} it follows that
\begin{align*}
  \frac{4 c}{k} (\|G_{k+1}\|_{L^\infty} + \|G_{k-1}\|_{L^\infty}) \leq \frac{8c}{k^2} (\frac{\eta}{k^2})^{-1},
\end{align*}
and by the bootstrap estimate \ref{item:B2}:
\begin{align*}
  & \quad \frac{4c}{k} (\frac{\eta}{k^2})^{-2} (\|\theta_{k+1}\|_{L^\infty} + \|\theta_{k-1}\|_{L^{\infty}}) \\
  & \leq \frac{4c}{k} (\frac{\eta}{k^2})^{-2} c \frac{\eta}{k^3}\pi \leq \frac{4 c^2 \pi}{k^2} (\frac{\eta}{k^2})^{-1}.
\end{align*}
Thus combining both estimates we obtain
\begin{align*}
  |\theta_k(T)-1|  \leq \frac{8c}{k} (\frac{\eta}{k^2})^{-1} +  \frac{4 c^2 \pi}{k^2} (\frac{\eta}{k^2})^{-1} < \frac{10c}{k} (\frac{\eta}{k^2})^{-1}.
\end{align*}

\underline{Improving \ref{item:B2}:}
We next consider $\theta_{l}$ for $l\in \{k-1,k+1\}$, for which by the integral equation \eqref{eq:integralequations} it holds that
\begin{align*}
  \theta_{k \pm 1}(T)&= \int_{t_k}^T c_{k\pm 1}^{\mp} (\theta_k -1 +1) + \int_{t_{k}}^T d_{k\pm 1}^{\mp} G_k \\
  & \quad + \int_{t_k}^T c_{k\pm 1}^{\pm} \theta_{k\pm 2} + \int_{t_k}^T d_{k\pm 1}^{\pm} G_{k\pm 2}.
\end{align*}
We then subtract the contributions by $\theta_k(t_k)=1$ from both sides and
control:
\begin{align*}
  \int_{t_k}^T c_{k\pm 1}^{\mp} (\theta_k -1) &\underset{\eqref{eq:resonantcoeffs},\ref{item:B1}}{\leq} c \frac{\eta}{k^3} \pi \|\theta_k-1\|_{L^\infty}\leq \frac{10c^2}{k^2} \pi,\\
  \int_{t_{k}}^T d_{k\pm 1}^{\mp} (G_k -\hat{G}_k) &\underset{\eqref{eq:resonantcoeffs},\ref{item:B4}}{\leq} c \frac{\eta}{k^3}\pi \|G_k-\hat{G}_k\|\leq \frac{1}{k} c \frac{\eta}{k^3}\pi\\
  \int_{t_k}^T c_{k\pm 1}^{\pm} \theta_{k\pm 2} &\underset{\eqref{eq:nonresonantcoeffs},\ref{item:B3}}{\leq} \frac{4c}{k} (\frac{\eta}{k^2})^{-2} c\frac{\eta}{k^3} (c (\frac{\eta}{k^2})^{-2})\\
  \int_{t_k}^T d_{k\pm 1}^{\pm} G_{k\pm 2}&\underset{\eqref{eq:nonresonantcoeffs},\ref{item:B5}}{\leq} c \frac{1}{k} (\frac{\eta}{k^2}) (c (\frac{\eta}{k^2})^{-2})^2,
\end{align*}
where
\begin{align*}
  \hat{G_k}=  \int_{t_k}^t \exp(-\nu\int_{t_k}^t k^2 +(\eta-ks)^2 ds)2 \frac{(\frac{\eta}{k}-t)}{(1+(\frac{\eta}{k}-t)^2)^2} dt
\end{align*}
is the contribution to $G_k$ by $\theta_k(t_k)=1$.

Here the contributions by $\theta_{k\pm 2}$, $G_{k \pm 2}$ are much
smaller than $\theta_{k\pm 1}$ by the bootstrap assumption and moreover the
coefficient functions are small.
The size of $\theta_{k\pm 1}$ is thus mostly determined by the modes
$\theta_{k}$ and $G_k$, which form the core of the resonance mechanism.

The claimed estimate thus follows.

\underline{Improving \ref{item:B3}:}
For $l \not \in \{k-1,k,k+1\}$ the integral equations
\eqref{eq:integralequations} read
\begin{align*}
  \theta_l(T) = \int_{t_k}^T c_{l}^{+} \theta_{l+1} + c_{l}^{-} \theta_{l-1} + d_{l}^{+} G_{l+1} + d_{l}^{-} G_{l-1} dt.
\end{align*}
By our choice of $l$ all coefficient functions $c_{l}^\pm, d_{l}^{\pm}$ are
controlled by the estimate \eqref{eq:nonresonantcoeffs} of Lemma
\ref{lemma:coeffestimates}.

Combining these estimates, we deduce that
\begin{align*}
  |\theta_{l}(T)|& \leq 4c \frac{1}{k} (\frac{\eta}{k^2})^{-2} (\|\theta_{l+1}\|_{L^\infty} + \|\theta_{l-1}\|_{L^\infty})\\
                 & \quad + \frac{c}{k} (\|G_{l+1}\|_{L^\infty} + \|G_{l-1}\|_{L^\infty})                   
\end{align*}
As in the estimate of \ref{item:B2} we here observe that by the bootstrap
assumptions the nodes $l\pm 1$ closer to $k$ are potentially much larger and
thus determine the achievable upper bound.
In particular, we gain a factor $ \frac{4}{k} c(\frac{\eta}{k^2})^{-2}$ with
respect to the neighbors in $\theta$ and a factor $\frac{c}{k}$ with respect to the
neighboring modes of $G$, which by the bootstrap assumptions themselves satisfy bounds with an additional power
$(\frac{\eta}{k^2})^{-2}$.

Hence, using the bootstrap estimates \ref{item:B2}, \ref{item:B3} and
\ref{item:B5} we deduce that
\begin{align*}
  |\theta_{l}(T)| &\leq \frac{1}{k} c(\frac{\eta}{k^2})^{-2 + \min(|l-1-k|, |l+1-k|)+1} \frac{0.5}{k} c\frac{\eta}{k^3} \\
                  & \quad + c \frac{1}{k} (\frac{\eta}{k^2}) (c (\frac{\eta}{k^2})^{-2})^{\min(|l+1-k|, |l-1-k|}\\
  & < \frac{0.5}{k} c \frac{\eta}{k^3} (c (\frac{\eta}{k^2})^{-2})^{|l-k|+1}.
\end{align*}

\underline{Improving \ref{item:B4}:}
We next turn to studying the evolution of $G_k$:
\begin{align*}
  G_k(T)=
  &= \int_{t_k}^{T}\exp(-\nu\int_{t}^{T}k^2+(\eta-k\tau)^2 d\tau) \frac{2}{k} \frac{(\frac{\eta}{k}-t)}{(1+(\frac{\eta}{k}-t)^2)^2} (\theta_{k}-1 +1) dt \\
  &\quad + \int_{t_k}^{T}\exp(-\nu\int_{t}^{T}k^2+(\eta-k\tau)^2 d\tau) f \frac{\nu}{g} ik(d_k^{+} G_{k+1} + d_k^{-} G_{k-1}) dt \\
  &\quad + \int_{t_k}^{T}\exp(-\nu\int_{t}^{T}k^2+(\eta-k\tau)^2 d\tau) f \frac{\nu}{g} ik(c_k^{+} \theta_{k+1} + c_k^{-} \theta_{k-1}) dt \\
  & \quad + \int_{t_k}^{T}\exp(-\nu\int_{t}^{T}k^2+(\eta-k\tau)^2 d\tau) \frac{1}{1+(\frac{\eta}{k}-t)^2}\left( c_{k}^{+} \theta_{k+1} + c_{k}^{-} \theta_{k-1}\right) dt \\
  & \quad + \int_{t_k}^{T}\exp(-\nu\int_{t}^{T}k^2+(\eta-k\tau)^2 d\tau) \frac{1}{1+(\frac{\eta}{k}-t)^2} \left(d_{k}^{+} G_{k+1} + d_{k}^{-} G_{k-1} \right) dt.
\end{align*}
We then subtract the contribution by $\theta_k(t_k)=1$ from both sides and
estimate
\begin{align*}
  \int_{t_k}^{T}\exp(-\nu\int_{t}^{T}k^2+(\eta-k\tau)^2 d\tau) \frac{2}{k} \frac{(\frac{\eta}{k}-t)}{(1+(\frac{\eta}{k}-t)^2)^2} (\theta_{k}-1) & \underset{\eqref{eq:Gthetares},\ref{item:B1}}{\leq} \frac{2}{k}\frac{0.5}{k}= \frac{1}{k^2}, \\
  \int_{t_k}^{T}\exp(-\nu\int_{t}^{T}k^2+(\eta-k\tau)^2 d\tau) f \frac{\nu}{g} ik(d_k^{+} G_{k+1} + d_k^{-} G_{k-1}) dt&\underset{\eqref{eq:Gf},\ref{item:B5}}{\leq}
c (\frac{\eta}{k})^{-2} \frac{1}{k} (\frac{\eta}{k^2})^{-1}, \\
  \int_{t_k}^{T}\exp(-\nu\int_{t}^{T}k^2+(\eta-k\tau)^2 d\tau) f \frac{\nu}{g} ik (c_k^{+} \theta_{k+1} + c_k^{-} \theta_{k-1})& \underset{\eqref{eq:Gf}, \ref{item:B2}}{\leq}c (\frac{\eta}{k})^{-4} c \frac{\eta}{k^3}, \\
  \int_{t_k}^{T}\exp(-\nu\int_{t}^{T}k^2+(\eta-k\tau)^2 d\tau) \frac{1}{1+(\frac{\eta}{k}-t)^2}\left( c_{k}^{+} \theta_{k+1} + c_{k}^{-} \theta_{k-1}\right) &\underset{\eqref{eq:Gthetac}, \ref{item:B2}}{\leq}
  \frac{c}{k} (\frac{\eta}{k^2})^{-3} 16 \pi c\frac{\eta}{k^3},\\
  \int_{t_k}^{T}\exp(-\nu\int_{t}^{T}k^2+(\eta-k\tau)^2 d\tau) \frac{1}{1+(\frac{\eta}{k}-t)^2}
  \left(d_{k}^{+} G_{k+1} + d_{k}^{-} G_{k-1} \right) &\underset{\eqref{eq:GGd},\ref{item:B5}}{\leq} \frac{c}{k}(\frac{\eta}{k^2})^{-2} \pi  \frac{c}{k} (\frac{\eta}{k^2})^{-1}.
\end{align*}
We note that here the largest contribution is given by the first line and that
hence estimate \ref{item:B4} improves.

\underline{Improving \ref{item:B5}:}
Finally, we consider $G_l(T)$ for $l\neq k$, which satisfies:
\begin{align*}
  G_{l}(T)&= \int_{t_k}^{T}\exp(-\nu\int_{t}^{T}l^2+(\eta-l\tau)^2 d\tau) \frac{2}{l} \frac{(\frac{\eta}{l}-t)}{(1+(\frac{\eta}{l}-t)^2)^2} \theta_{l} dt \\
            &\quad + \int_{t_k}^{T}\exp(-\nu\int_{t}^{T}l^2+(\eta-l\tau)^2 d\tau) f \frac{\nu}{g} il (d_l^{+} G_{l+1} + d_l^{-} G_{l-1}) dt \\
            &\quad + \int_{t_k}^{T}\exp(-\nu\int_{t}^{T}l^2+(\eta-l\tau)^2 d\tau) f \frac{\nu}{g} il(c_l^{+} \theta_{l+1} + c_l^{-} \theta_{l-1}) dt \\
            & \quad + \int_{t_k}^{T}\exp(-\nu\int_{t}^{T}l^2+(\eta-l\tau)^2 d\tau) \frac{1}{1+(\frac{\eta}{l}-t)^2}\left( c_{l}^{+} \theta_{l+1} + c_{l}^{-} \theta_{l-1}\right) dt \\
            & \quad + \int_{t_k}^{T}\exp(-\nu\int_{t}^{T}l^2+(\eta-l\tau)^2 d\tau) \frac{1}{1+(\frac{\eta}{l}-t)^2} \left(d_{l}^{+} G_{l+1} + d_{l}^{-} G_{l-1} \right) dt.
\end{align*}
We remark that if, for instance, $c\leq \nu$ then all coefficient functions on
the right-hand-side can easily be dominated by the exponential decay and using
that $(\eta-lt)^2$ is bounded below, since $l\neq k$.
A key effort of this proof hence lies in establishing estimates that are valid
when $\nu < c$ as well.

Since $l\neq k$ is non-resonant, we may control
\begin{align*}
  & \quad \int_{t_k}^{T}\exp(-\nu\int_{t}^{T}l^2+(\eta-l\tau)^2 d\tau) \frac{2}{l} \frac{(\frac{\eta}{l}-t)}{(1+(\frac{\eta}{l}-t)^2)^2} \theta_{l}\\
  &\underset{\eqref{eq:Gthetanonres}, \ref{item:B2}, \ref{item:B3}}{\leq}
  \frac{2}{k} (\frac{\eta}{k^2})^{-2} c \frac{\eta}{k^3} (c (\frac{\eta}{k^2})^{-2})^{|l-k|+1}.
\end{align*}

For the remaining estimates, we first study the case $l \not \in \{k-1,k,k+1\}$,
for which
\begin{align*}
  &\quad \int_{t_k}^{T}\exp(-\nu\int_{t}^{T}l^2+(\eta-l\tau)^2 d\tau) f \frac{\nu}{g} il(d_l^{+} G_{l+1} + d_l^{-} G_{l-1}) \\
  &\underset{\eqref{eq:Gf}, \ref{item:B5}}{\leq}
  c (\frac{\eta}{k^2})^{-2} (c(\frac{\eta}{k^2})^{-2})^{|l-k|} , \\
  &\quad \int_{t_k}^{T}\exp(-\nu\int_{t}^{T}l^2+(\eta-l\tau)^2 d\tau) f \frac{\nu}{g} il(c_l^{+} \theta_{l+1} + c_l^{-} \theta_{l-1})\\
  &\underset{\eqref{eq:Gf}, \ref{item:B3}}{\leq}
   c (\frac{\eta}{k^2})^{-4} c \frac{\eta}{k^3} (c(\frac{\eta}{k^2})^{-2})^{|l-k|}\\
  &\quad \int_{t_k}^{T}\exp(-\nu\int_{t}^{T}l^2+(\eta-l\tau)^2 d\tau) \frac{1}{1+(\frac{\eta}{l}-t)^2}\left( c_{l}^{+} \theta_{l+1} + c_{l}^{-} \theta_{l-1}\right)\\
  &\underset{\eqref{eq:Gthetac}, \ref{item:B3}}{\leq}
   32 \frac{c}{k} (\frac{\eta}{k^2})^{-4}  c \frac{\eta}{k^3} (c(\frac{\eta}{k^2})^{-2})^{|l-k|-1}\\
  &\quad \int_{t_k}^{T}\exp(-\nu\int_{t}^{T}l^2+(\eta-l\tau)^2 d\tau) \frac{1}{1+(\frac{\eta}{l}-t)^2} \left(d_{l}^{+} G_{l+1} + d_{l}^{-} G_{l-1} \right)\\
  &\underset{\eqref{eq:GGd}, \ref{item:B5}}{\leq}
  \frac{c}{k} (\frac{\eta}{k^2})^{-1} \frac{1}{k} (c(\frac{\eta}{k^2})^{-2})^{|l-k|-1}.
\end{align*}
In particular, all estimates indeed yield an improvement by a factor
$c(\frac{\eta}{k^2})^{-2}$ compared to its neighbors and thus this bootstrap
estimate is improved.

Finally, we discuss the case $l \in \{k-1,k+1\}$.
Here the above estimates also apply to the contributions due to $\theta_{k\pm
  1}$, $\theta_{k\pm 2}$ and $G_{k\pm 2}$.
For the contributions by $\theta_k$ and $G_k$ we instead establish the following
estimates:
\begin{align*}
  \int_{t_k}^{T}\exp(-\nu\int_{t}^{T}l^2+(\eta-l\tau)^2 d\tau) f \frac{\nu}{g} ild_{l}^{\mp} G_k
  &\underset{\eqref{eq:Gf}, \ref{item:B4}}{\leq}
  c (\frac{\eta}{k^2})^{-1} \frac{2}{k}, \\
  \int_{t_k}^{T}\exp(-\nu\int_{t}^{T}l^2+(\eta-l\tau)^2 d\tau) f \frac{\nu}{g} ilc_{l}^{\mp} \theta_k
  &\underset{\eqref{eq:Gf}, \ref{item:B1}}{\leq}
  \frac{c}{k} (\frac{\eta}{k^2})^{-1} 2, \\
  \int_{t_k}^{T}\exp(-\nu\int_{t}^{T}l^2+(\eta-l\tau)^2 d\tau) \frac{1}{1+(\frac{\eta}{l}-t)^2} c_{l}^{\mp} \theta_k
  &\underset{\eqref{eq:Gthetac}, \ref{item:B1}}{\leq}
  \frac{c}{k^2} (\frac{\eta}{k^2})^{-1} \pi, \\
  \int_{t_k}^{T}\exp(-\nu\int_{t}^{T}l^2+(\eta-l\tau)^2 d\tau) \frac{1}{1+(\frac{\eta}{l}-t)^2} d_{l}^{\mp} G_k
  &\underset{\eqref{eq:GGd}, \ref{item:B4}}{\leq}
  \frac{c}{k}(\frac{\eta}{k^2})^{-1} \frac{2\pi}{k}.
\end{align*}
It thus follows that
\begin{align*}
  |G_{k\pm 1}(T)|\leq 10 \frac{c}{k} (\frac{\eta}{k^2})^{-1},
\end{align*}
which is exactly the desired estimate \ref{item:B5} in that case.

\underline{Improving (B1'), (B2'), (B4'):}
These estimates follow by the same argument as in the unmodified cases. We
omit the details for brevity. 
\end{proof}

We next turn to the non-resonant cases, where the initial data is localized on a
mode $k_0\neq k$.
Similarly to the resonant case we here show that the effect on neighboring modes
decreases in terms of powers of $c(\frac{\eta}{k^2})^{-2}$ except for the
interact of the mode $k$ with its neighbors, which increases by a factor $c
\frac{\eta}{k^3}$.
In particular, we observe that since the evolution equations
\eqref{eq:integralequations} only explicitly include nearest neighbor
interactions (and one interaction $\theta_l \mapsto G_l$) the estimates derived
in Proposition \ref{prop:res} only relied on the relative growth or decrease of
these weights.
Hence, the proofs in the non-resonant case are largely identical to the resonant
case except for a different choice of initial data and multiplication by a
suitable factor.

The estimates for the non-resonant case are summarized in the following proposition.
\begin{prop}
  \label{prop:nonres}
  Let $k_0\neq k$ and suppose that at time $t_k$, it holds that
  \begin{align*}
    \theta_l=\delta_{l k_0},\  G_l\equiv 0.
  \end{align*}
  We then define a weight function $\gamma: \Z \rightarrow \R_{+}$ to satisfy the
  following properties:
  \begin{enumerate}
  \item $\gamma(k_0)=1$. 
  \item $\gamma(k+1)=\gamma(k-1)= c \frac{\eta}{k^3} \gamma(k)$. 
  \item $\gamma(l+1)= c (\frac{\eta}{k^2})^{-2} \gamma(l)$ if $l>k_0$ and
    $\gamma(l-1)= c (\frac{\eta}{k^2})^{-2} \gamma(l)$ if $l<k_0$ unless this
    would violate the second property.
  \end{enumerate}
  
  Then the following bootstrap estimates hold:
  \begin{enumerate}[start=1, label={(C\arabic*)}]
  \item \label{item:C1} For all $l$ it holds that
    \[|\theta_l(T)-\delta_{l k_0}|\leq \gamma(l).\]
  \item \label{item:C2} For all $l$ it holds that
    \[|G_{l}(T)|\leq c (\frac{\eta}{k^2})^{-2} \gamma(l).\]
  \end{enumerate}

  If instead $G_l(t_k)= \delta_{l k_0}$ for some $k_0 \neq k$,
  then the following bootstrap estimates hold:
  \begin{enumerate}[start=1, label={(D\arabic*)}]
  \item \label{item:D1} For all $l$ it holds that
    \[|\theta_l(T)| \leq \frac{0.5}{k} \gamma(l) .\]
  \item \label{item:D2} For all $l$ it holds that
    \[|G_l(T)-\delta_{l k_0}\exp(-\nu \int_{t_k}^Tk_0^2+(\eta-k_0s)^2ds)|\leq \gamma(l).\]
  \end{enumerate}
\end{prop}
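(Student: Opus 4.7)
The plan is to mirror the bootstrap strategy of Proposition~\ref{prop:res} almost word for word, exploiting that the system \eqref{eq:integralequations} couples only nearest Fourier neighbors (plus the single same-mode term $\theta_l \mapsto G_l$). The entire task reduces to verifying that the weight $\gamma$ in the statement has been chosen so that each coefficient appearing in \eqref{eq:integralequations} is dominated by the corresponding ratio of consecutive weights; once this is done, the bootstrap improvement proceeds by direct substitution and application of Lemmas~\ref{lemma:coeffestimates} and \ref{lemma:coefficientsForG}.

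Concretely, I would first rescale to $\tilde\theta_l := \theta_l/\gamma(l)$ and $\tilde G_l := G_l/(c(\eta/k^2)^{-2}\gamma(l))$. The three admissible weight ratios $\gamma(l\pm 1)/\gamma(l)$, namely $c(\eta/k^2)^{-2}$, $c\eta/k^3$, and their reciprocals at $l\in\{k-1,k,k+1\}$, are all bounded, so the integral system for $(\tilde\theta,\tilde G)$ has uniformly $L^1_t$ coefficients on $I_k$ and a short-time contraction mapping in $\ell^\infty$ initializes the bootstrap. Letting $T_\star\leq t_{k-1}$ be maximal with \ref{item:C1}, \ref{item:C2} (respectively \ref{item:D1}, \ref{item:D2}) valid, I would then plug these bounds into the integral equations and show that each inequality improves by a factor strictly less than $1$, forcing $T_\star=t_{k-1}$ by contradiction with maximality.

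The improvement is a case analysis on $l$. For $l\notin\{k-1,k,k+1\}$ all incoming coefficients are non-resonant, and \eqref{eq:nonresonantcoeffs}, \eqref{eq:Gf}, \eqref{eq:Gthetac}, \eqref{eq:GGd} provide exactly the gain $c(\eta/k^2)^{-2}$ or $c/k$ needed to match the weight ratio $\gamma(l)/\gamma(l\pm 1)$. For $l\in\{k-1,k+1\}$ the dominant incoming term comes from mode $k$ via the resonant coefficient $\int c_{k\pm 1}^\mp = (c\eta/k^3)\pi$ from \eqref{eq:resonantcoeffs}, which is precisely balanced by the designed jump $\gamma(k\pm 1)=(c\eta/k^3)\gamma(k)$. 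For $l=k$ itself the incoming modes $k\pm 1$ feed through the \emph{non}-resonant coefficients $c_k^\pm$, whose gain $(c/k)(\eta/k^2)^{-2}$ easily absorbs the reciprocal factor $\gamma(k)/\gamma(k\pm 1)=(c\eta/k^3)^{-1}$. At $l=k_0$ the initial value $\delta_{l k_0}$ is consistent with $\gamma(k_0)=1$, and the corrections from $\int c_{k_0}^\pm \theta_{k_0\pm 1}$, $\int d_{k_0}^\pm G_{k_0\pm 1}$ are small by the same non-resonant estimates. The variant \ref{item:D1}, \ref{item:D2} is handled identically, with the role of the ``main term'' for $G_{k_0}$ taken by the exponentially damped initial value $\exp(-\nu\int_{t_k}^T k_0^2+(\eta-k_0 s)^2\,ds)$ and all other estimates unchanged.

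The main obstacle will be careful bookkeeping at the three special modes $k-1,k,k+1$, particularly when $k_0$ lies on one side of $k$ so that the echo has to propagate across the resonant mode. There the monotone decay rule $\gamma(l\pm 1)=c(\eta/k^2)^{-2}\gamma(l)$ is violated, and one has to verify that the resonant amplification going from $k$ to $k\pm 1$ is matched by the reciprocal contraction going the other way, and that the $G$-estimate \eqref{eq:Gthetares} at the resonant mode $k$ is absorbed by the normalization factor $c(\eta/k^2)^{-2}$ included in the definition of $\tilde G$. Once these three transitions are checked explicitly using \eqref{eq:resonantcoeffs} and \eqref{eq:Gthetares}, the remaining algebra is routine and structurally identical to that in the proof of Proposition~\ref{prop:res}.
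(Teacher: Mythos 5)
Your overall strategy is the same as the paper's: run the bootstrap of Proposition~\ref{prop:res} again (short-time contraction to initialize, self-improvement via Lemmas~\ref{lemma:coeffestimates} and \ref{lemma:coefficientsForG}), using that only the ratios $\gamma(l)/\gamma(l\pm 1)$ enter and that the only genuinely new features are the location $k_0$ of the data and, in the (D)-case, the damped initial value of $G_{k_0}$.

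The gap is in the one place you flag as the main obstacle, the crossing of the resonant mode $k$, where your closing mechanism does not work as stated. For $l=k\pm 1$ you argue that the resonant coefficient $\int c_{k\pm1}^{\mp}\approx \tfrac{c\eta}{k^3}\tfrac{\pi}{2}$ is ``precisely balanced'' by the jump $\gamma(k\pm1)=\tfrac{c\eta}{k^3}\gamma(k)$; but paired with the bootstrap bound $|\theta_k|\le\gamma(k)$ this feed is $\approx\tfrac{\pi}{2}\,\gamma(k\pm1)>\gamma(k\pm1)$, so \ref{item:C1} does not self-improve by weight-matching alone. More seriously, for \ref{item:C2} at $l=k$ the kernel in \eqref{eq:Gthetares} has time integral of order one and is \emph{not} ``absorbed by the normalization factor $c(\eta/k^2)^{-2}$'' in your definition of $\tilde G$: that factor sits on the target side of \ref{item:C2} and makes the required bound on $G_k$ smaller, not larger. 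What actually rescues both estimates is that $\theta_k(t_k)=0$ and $\theta_k$ is fed only through non-resonant coefficients from neighbors bounded by $\gamma(k\pm1)$, so that
\begin{align*}
  \sup_{I_k}|\theta_k|\lesssim \frac{c}{k}\Big(\frac{\eta}{k^2}\Big)^{-2}\cdot \frac{c\eta}{k^3}\,\gamma(k)\;\approx\;\frac{c^2}{\eta}\,\gamma(k)\ll\gamma(k);
\end{align*}
the improvements must therefore be ordered ($\theta_k$ first, then $\theta_{k\pm1}$ and $G_k$), not done in a single weight-matching pass. Even with this improved bound, playing $2\sup|\theta_k|$ against the stated target $c(\eta/k^2)^{-2}\gamma(k)$ for $G_k$ requires $c\eta\lesssim k^4$, which fails for small $k$ in the resonant regime; so at this single mode you must either exploit the antisymmetry/dissipation in \eqref{eq:Gthetares} or settle for a weaker bound on $G_k$ there (which is all the Schur-test argument of Section~\ref{sec:mechanism} actually consumes). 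To be fair, the paper's own proof is equally terse at this point, but this is precisely the step your ``routine algebra'' conceals, and your literal mechanism for it would fail.
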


\begin{proof}[Proof of Proposition \ref{prop:nonres}]
  In this proof we follow the same bootstrap strategy as in Proposition
  \ref{prop:res}, where the initial bootstrap estimates again follow by a local
  contraction argument.

  It thus remains to be shown that the bootstrap estimates \ref{item:C1},
  \ref{item:C2} and \ref{item:D1}, \ref{item:D2} self-improve.
  Here we note that in our proof the estimates \ref{item:B1}--\ref{item:B5} we
  only used the relative size of the bounds on neighboring modes compared to the
  size of the desired bound on the current mode.
  Therefore, in the current estimate we only need to control
  \begin{align*}
    \frac{\gamma(l)}{\gamma(l\pm 1)},
  \end{align*}
  which by construction satisfies the same estimates as in Proposition
  \ref{prop:res}.
  In particular, most bounds follow by the exact same argument. In the interest
  of brevity we hence only comment on possible differences in the proof.

  \underline{Improving \ref{item:C1}, \ref{item:C2}:}
  Since $\theta_{k}(t_k)=0$, $G_k(t_k)=0$ the estimates for $\theta_{k\pm 1}$ simplify and
  follow by the same argument as for \ref{item:B1} to \ref{item:B5}.
  As the only difference we note that for $\theta_{k_0}(T)$ we, of course,
  consider
  \begin{align*}
    \theta_{k_0}(T)- \theta_{k_0}(t_k)= \theta_{k_0}(T)-1. 
  \end{align*}

  \underline{Improving \ref{item:D1},\ref{item:D2}:}
  We again note that in our bounds \ref{item:B1}--\ref{item:B5} we only required
  control on the relative size of the desired estimates.
  For instance, in order to control $\|\theta_{k}-1\|_{L^\infty}$ in \ref{item:B1} we only
  used that $\|G_{k\pm 1}\|_{L^\infty}$ and $\|\theta_{k\pm 1}\|_{L^\infty}$
  were controlled in terms of $c \frac{\eta}{k^3}$ as compared to the bound by
  $1$ imposed on $\theta_{k}$ by the bootstrap assumption.

  The estimates hence follow completely analogously, with the only difference
  that for $G_{k_0}$ the integral equations \eqref{eq:integralequations} include
  a contribution by the initial data.
\end{proof}

We have thus shown that Propositions \ref{prop:res} and \ref{prop:nonres} follow
as consequences of the coefficient estimates collected in Lemmas \ref{lemma:coeffestimates} and \ref{lemma:coefficientsForG}.
In turn, in Section \ref{sec:mechanism} we will use these propositions to
establish Theorem \ref{theorem:mechanism}.
The coefficient estimates are then proven in Subsection
\ref{sec:coeffestimates}.

\subsubsection{Proof of Theorem \ref{theorem:mechanism}}
\label{sec:mechanism}
In Propositions \ref{prop:res} and \ref{prop:nonres} we have established that
$\theta$ and $G$ satisfy suitable estimates in weighted $\ell^\infty$ spaces.
In the following we discuss how to pass from these $\ell^\infty$ estimates to
estimates on $X$ and thus prove Theorem \ref{theorem:mechanism}.

\begin{proof}[Proof of Theorem \ref{theorem:mechanism}]
Let $\theta(t_k), G(t_k) \in X$ be given initial data. Then by linearity we may
express the solution $\theta(T), G(T)$ as the sum over the solutions with
initial data localized on a single mode.
If we denote the weight of Proposition \ref{prop:nonres} for a mode $k_0$ by
$\gamma_{k_0}$, it thus follows
from Propositions \ref{prop:res}, \ref{prop:nonres} that for any $l \not \in
\{k-1,k, k+1\}$:
\begin{align*}
  |\theta_l(T)- \theta_{l}(t_k)| &\underset{\ref{item:B3}, \ref{item:C1}, \ref{item:D1}}{\leq} |\theta_k(t_k)| c \frac{\eta}{k^3} (c (\frac{\eta}{k^2})^{-2})^{|l-k|+1}\\
  & \quad\quad \quad \quad + \sum_{k_0\neq k} |\theta_{k_0}(t_k)| \gamma_{k_0}(l) + |G_{k_0}(t_k)| \gamma_{k_0}(l),
\end{align*}
and
 \begin{align*}
   & \quad\quad \quad \quad  |G_l(T)- G_l(t_k)\exp(-\nu\int_{t_k}^T l^2+(\eta-ls)^2ds)| \\
   & \underset{\ref{item:B4},\ref{item:C2},\ref{item:D2}}{\leq}
   \frac{\eta}{k^3} (c (\frac{\eta}{k^2})^{-2})^{|l-k|} |\theta_k(t_k)|
   + \frac{2}{k} |G_k(t_k)|\\
   &\quad \quad \quad \quad  + \sum_{k_0\neq k} \gamma_{k_0}(l) |\theta_k(t_k)| + |G_{k_0}(t_k)| \gamma_{k_0}(l).
 \end{align*}
We recall that $\gamma_{k_0}(l)$ rapidly decays in $|k_0-l|$ and thus interpret
the right-hand-side as discrete convolutions or rather integral kernels applied
to the (absolute values of the) initial data.

Hence, in the following we intend to pass from a point-wise bound
\begin{align*}
  |\theta_{l}(T)- \theta_{l}(t_k)|\leq \sum_{k_0} K(k_0,l) (|\theta_{k_0}|+|G_{k_0}|)
\end{align*}
to a bound on weighted $\ell^2$ spaces, $X$.

For this purpose we note that by Schur's test if the kernel $K(\cdot, \cdot)$ satisfies
\begin{align*}
  \sup_{l} \sum_{k_0} |K(k_0,l)| \leq C_1< \infty, \\
  \sup_{k_0} \sum_{l} |K(k_0,l)| \leq C_2 <\infty,
\end{align*}
then the associated integral operator maps $\ell^2$ to $\ell^2$ with operator
norm bounded by $\sqrt{C_1 C_2}$.

Applied to our case we observe that
\begin{align*}
  K(k_0, l) \leq c\frac{\eta}{k^3} (c (\frac{\eta}{k^2})^{-2})^{\max(0, |l-k_0|-1)},
\end{align*}
and therefore by the geometric series
\begin{align*}
  C_1=C_2 \leq c\frac{\eta}{k^3} \frac{1}{1-c (\frac{\eta}{k^2})^{-2}}\leq 1.1c\frac{\eta}{k^3}.
\end{align*}
It thus follows that
\begin{align*}
  \|(\theta(T)-\theta(t_k))_{l \not \in \{k-1,k,k+1\}}\|_{\ell^2} \leq 2  c\frac{\eta}{k^3} (\|\theta(t_k)\|_{\ell^2} + \|G(t_k)\|_{\ell^2}),
\end{align*}
and analogous estimates hold for $G$.
Moreover, by the Definition \ref{defi:X} of the space $X$ its weight function is
bounded by $2^{|l|}$ and we may thus apply the same argument with $2^{|k_0-l|}
|K(k_0,l)|$ to also deduce bounds on $X$.

It thus remains to discuss the modes $l \in \{k-1,k,k+1\}$.
Here we observe that by the bootstrap estimates and the triangle inequality
\begin{align*}
  |\theta_k(T)- \theta_{k}(t_k)| \leq \frac{10c}{k}(\frac{\eta}{k^2})^{-1}|\theta_k(t_k)|
  + \sum_{k_0 \neq k}\gamma_{k_0}(l) (|\theta_{k_0}(t_k)| + |G_{k_0}(t_k)|).
\end{align*}
Similarly, the modes $k-1, k+1$ by \ref{item:B2} and the other bootstrap
estimates satisfy
\begin{align*}
  & \quad |\theta_{k\pm 1}(T)- \theta_{k \pm 1}(t_k)-\int_{t_k}^T c_{k+1}^{-} \theta_k(t_k) - \int_{t_k}^T d_{k+1}^{-} \hat{G_K}|\\
  &\leq \frac{0.5}{k} c \frac{\eta}{k^3} |\theta_{k}(t_k)| + \sum_{k_0\neq k} \gamma(l) (|\theta_{k_0}(l)|+ |G_{k_0}(l)|),
\end{align*}
where
\begin{align*}
  \hat{G_k}(t)&= G_{k}(t_k) \exp(-\nu \int_{t_k}^t k^2+(\eta-ks)^2 ds) \\
  &\quad + \theta_{k}(t_k) \int_{t_k}^t \exp(-\nu\int_{t_k}^t k^2 +(\eta-ks)^2 ds)2 \frac{(\frac{\eta}{k}-\tau)}{(1+(\frac{\eta}{k}-\tau)^2)^2} d\tau
\end{align*}
accounts for the explicit influence of $G_k(t_k)$ and $\theta_k(t_k)$.

We further recall that by Lemma \ref{lemma:coeffestimates}
\begin{align*}
  \int_{t_k}^{t_{k-1}} c_{k+1}^{-} dt \approx c\frac{\eta}{k^3}\pi, \\
  \int_{t_k}^{t_{k-1}} d_{k+1}^{-} dt \approx c\frac{\eta}{k^2}\frac{\pi}{2}.
\end{align*}
and note that
\begin{align*}
  \int_{t_k}^{t_{k-1}} \exp(-\nu\int_{t_k}^t k^2 +(\eta-ks)^2 ds)\frac{2}{k} \frac{(\frac{\eta}{k}-t)}{(1+(\frac{\eta}{k}-t)^2)^2} dt \leq 2.
\end{align*}
Hence, $\theta_{k-1}$ satisfies the desired upper bound
\begin{align*}
  |\theta_{k\pm 1}(T)- \theta_{k\pm 1}(t_{k}) \int_{t_k}^T c_{k+1}^{-} \theta_k(t_k) - \int_{t_k}^T d_{k+1}^{-} \hat{G_K}|\\
  \leq \max(c\frac{\eta}{k^3}, \frac{1}{k}) (\|\theta(t_k)\|_{X} + \|G(t_k)\|_{X}),
\end{align*}
where we may insert $T=t_{k-1}$.
\end{proof}
In the following subsection we provide the proof of the
coefficient estimates of Lemma \ref{lemma:coeffestimates} and \ref{lemma:coefficientsForG}.

\subsubsection{Proof of Coefficient Estimates}
\label{sec:coeffestimates}

In Section \ref{sec:G} we have stated estimates on the coefficients in the
evolution equations \eqref{eq:integralequations} and used them to establish
bounds on the resonance mechanism.
In the following we prove these estimates. 
\begin{proof}[Proof of Lemma \ref{lemma:coeffestimates}]
  We note that for $l \in \{k-1,k+1\}$ it holds that
  \begin{align*}
    c_{k \pm 1}^{\mp} &= c \frac{\eta}{k^3} \frac{1}{(1+(\frac{\eta}{k}-t)^2)^2}, \\
    d_{k \pm 1}^{\mp} &= c\frac{\eta}{k^3} \frac{1}{1+(\frac{\eta}{k}-t)^2},
  \end{align*}
  which we integrate in time to obtain \eqref{eq:resonantcoeffs}.
  In particular, we stress that if one integrates over all of $I_k$ and
  $\frac{\eta}{k^2}$ is large, the integral is comparable to integral over all
  of $\R$ and
  \begin{align*}
    \int_{\R}\frac{1}{(1+(\frac{\eta}{k}-t)^2)^2}dt &= \frac{\pi}{2}, \\
    \int_{\R}\frac{1}{1+(\frac{\eta}{k}-t)^2}dt &= \pi,
  \end{align*}
  by explicit calculation.
  Therefore the resonant contributions \eqref{eq:resonantcoeffs}, for this
  choice of times, are comparable to $\frac{g}{\nu} \frac{\eta}{k^3}$ and thus potentially very large.\\

  We next turn to estimating all other coefficients. We recall that:
  \begin{align*}
    c_{l}^{\pm} &= c \frac{\eta}{(l\pm 1)^3} \frac{1}{(1+(\frac{\eta}{l\pm 1}-t)^2)^2}, \\
    d_{l}^{\pm} &= c \frac{\eta}{(l\pm 1)^3} \frac{1}{1+(\frac{\eta}{l\pm 1}-t)^2}.
  \end{align*}
  In this non-resonant-case considered in \eqref{eq:nonresonantcoeffs}, it holds
  that $l\pm 1 \neq k$ and thus
  \begin{align*}
    (\frac{\eta}{l\pm 1}-t)^2 \geq \frac{1}{4} \max(t, \frac{\eta}{(l\pm 1)^2}, \frac{\eta}{k^2})^2.
  \end{align*}
  It thus follows that
  \begin{align*}
    c_{l}^{\pm} \leq 16 c \frac{\eta}{(l\pm 1)^3} \max(t, \frac{\eta}{(l\pm 1)^2}, \frac{\eta}{k^2})^{-4} \leq 16 \frac{c}{k}  (\frac{\eta}{k^2})^{-3},
  \end{align*}
  and
  \begin{align*}
     d_{l}^{\pm} \leq 4 c \frac{\eta}{(l\pm 1)^3}\max(t, \frac{\eta}{(l\pm 1)^2}, \frac{\eta}{k^2})^{-2} \leq 4 \frac{c}{k} (\frac{\eta}{k^2})^{-1}.
  \end{align*}
  Using the fact that $|I_k|= \frac{1}{2} (\frac{\eta}{k(k-1)}+
  \frac{\eta}{k(k+1)}) \leq 2\frac{\eta}{k^2}$ , the estimates
  \eqref{eq:nonresonantcoeffs}
  thus follow.
\end{proof}
We next turn to the coefficient estimates required to control the evolution of $G$.

\begin{proof}[Proof of Lemma \ref{lemma:coefficientsForG}]
\underline{Estimating \eqref{eq:Gthetares}:}
We first consider the integral
\begin{align*}
  \int_{t_k}^{T}\exp(-\nu\int_{t}^{T}l^2+(\eta-l\tau)^2 d\tau) \left| 2 \frac{(\frac{\eta}{l}-t)}{(1+(\frac{\eta}{l}-t)^2)^2}\right|, 
\end{align*}
for $k=l$, which is related to the forcing exerted by the mode $\theta_{k}$ on $G_k$.
Since we are searching for estimates uniform in $\nu$, we bound the exponential
by $1$ and observe that
\begin{align*}
  \int_{\R} \left|\frac{(\frac{\eta}{l}-t)}{(1+(\frac{\eta}{l}-t)^2)^2}\right| = 1,
\end{align*}
which yields the desired upper bound.
However, we remark that if the absolute value signs are not introduced then the
fraction is anti-symmetric with respect to $t=\frac{\eta}{k}$.

\underline{Estimating \eqref{eq:Gthetanonres}:}
  We next consider the same integral for $l\neq k$.
  In this case it holds that
  \begin{align*}
    (\frac{\eta}{l}-t)^2 \geq \frac{1}{4} \max(t, \frac{\eta}{k^2}, \frac{\eta}{l^2})^2
  \end{align*}
  and thus
  \begin{align*}
    2 \frac{(\frac{\eta}{k}-t)}{(1+(\frac{\eta}{k}-t)^2)^2} \\
    \leq 2 (\frac{\eta}{k^2})^{-3}.
  \end{align*}
  The claimed estimate then follows by again noting that the length of $I_k$ is
  controlled in terms of $\frac{\eta}{k^2}$.

\underline{Estimating \eqref{eq:Gf}:}
  Since $f\leq \frac{g}{\nu}\frac{1}{1+t^2}$, we need to control
  \begin{align*}
    \int_{t_k}^T \frac{c}{1+t^2} \frac{\eta}{l^3} \frac{1}{(1+(\frac{\eta}{l}-t)^2)^2} dt,\\
    \int_{t_k}^T \frac{c}{1+t^2} \frac{\eta}{l^3} \frac{1}{1+(\frac{\eta}{l}-t)^2} dt.
  \end{align*}
  Consider the first integral.
  We recall that $t \approx \frac{\eta}{k}$ and argue as in Section
  \ref{sec:small}. That is, for $l>\frac{k}{2}$, $l\neq k$ we may bound
  \begin{align*}
    \frac{c}{1+t^2} \frac{\eta}{l^3} \frac{1}{(1+(\frac{\eta}{l}-t)^2)^2} \leq c (\frac{\eta}{k})^{-2} \frac{\eta}{k^3} (\frac{\eta}{k})^{-4} \leq  c(\frac{\eta}{k})^{-5},
  \end{align*}
  which is then integrated over a time interval of length bounded by
  $\frac{\eta}{k^2}$ and we thus gain
  \begin{align*}
    c (\frac{\eta}{k})^{-4}.
  \end{align*}
  If $l=k$, this estimate is slightly worse as
  \begin{align*}
    c (\frac{\eta}{k})^{-2} \frac{\eta}{k^3} \int_{t_{k}}^T \frac{1}{1+(\frac{\eta}{l}-t)^2} dt \leq c (\frac{\eta}{k})^{-1}.
  \end{align*}
  Finally, if $l<\frac{k}{2}$, we bound
  \begin{align*}
    \frac{c}{1+t^2} \frac{\eta}{l^3} \frac{1}{1+(\frac{\eta}{l}-t)^2} \leq c \frac{\eta}{l^3}  \frac{1}{1+t^2} \frac{1}{(1+ \frac{1}{4}\max(\frac{\eta}{l},t)^2)^2}
    \leq \frac{c}{k} (\frac{\eta}{k})^{-5}
  \end{align*}
  and thus bound the integral by
  \begin{align*}
    \frac{c}{k} (\frac{\eta}{k})^{-4}.
  \end{align*}

  For the second integral we argue similarly.
  If $l=k$, then we control the integral by
  \begin{align*}
    \frac{c}{1+(\frac{\eta}{k})^2} \frac{\eta}{k^3} \pi \leq \frac{c}{k} (\frac{\eta}{k})^{-1}.
  \end{align*}
  If $l\neq k$, then
  \begin{align*}
    \frac{\eta}{l^3} \frac{1}{1+(\frac{\eta}{l}-t)^2} \leq \frac{1}{k}(\frac{\eta}{k})^{-1}
  \end{align*}
  is uniformly integrable and we control by
  \begin{align*}
    \frac{c}{1+t^2} \leq c (\frac{\eta}{k})^{-2}.
  \end{align*} 

  \underline{Estimating \ref{eq:Gthetac}}
  We next consider the integral
  \begin{align*}
     \int_{t_k}^{T}\exp(-\nu\int_{t}^{T}l^2+(\eta-l\tau)^2 d\tau)\frac{1}{1+(\frac{\eta}{l}-t)^2}c_{l}^{\pm},
  \end{align*}
  where we again bound the exponential by $1$, since we allow for $\nu$ to be
  very small.
  Then for
  \begin{align}
    \label{eq:Gthetacnonu}
    \frac{1}{1+(\frac{\eta}{l}-t)^2} c \frac{\eta}{(l\pm 1)^3} \frac{1}{(1+(\frac{\eta}{l}-t)^2)^2}
  \end{align}
  we distinguish three cases.

  If $l\neq k$ and $l\pm 1 \neq k$, that is all frequencies are non-resonant,
  then we may control
  \begin{align*}
    (\frac{\eta}{l}-t)^2 \geq \frac{1}{4}\max(\frac{\eta}{l^2}, \frac{\eta}{k^2})^2
  \end{align*}
  and thus \eqref{eq:Gthetacnonu} is bounded by 
  \begin{align*}
     \frac{\eta}{(l\pm 1)^3} 16 c \max(\frac{\eta}{l^2}, \frac{\eta}{k^2})^{-6}
    \leq 16 \frac{c}{k} (\frac{\eta}{k^2})^{-5}.
  \end{align*}
  Since the length of interval $I_k$ is bounded by $2 \frac{\eta}{k^2}$, the
  claimed bound hence follows.

  If $l=k$, then $l\pm 1 \neq k$ and we may control \eqref{eq:Gthetacnonu} by a
  constant times
  \begin{align*}
    & \quad c \frac{\eta}{(k\pm 1)^3} (\frac{\eta}{k^2})^{-4} \frac{1}{1+(\frac{\eta}{k}-t)^2} \\
    & \leq \frac{c}{k} (\frac{\eta}{k^2})^{-3} \frac{1}{1+(\frac{\eta}{k}-t)^2}
  \end{align*}
  Since the last factor is integrable in time with integral bounded by $\pi$,
  the claimed estimate follows.

  Finally, if $l\pm 1=k$ then $l\neq k$ and we may control \eqref{eq:Gthetacnonu} by a
  constant times
  \begin{align*}
    & \quad c \frac{\eta}{k^3} (\frac{\eta}{k^2})^{-2} \frac{1}{(1+(\frac{\eta}{l}-t)^2)^2}\\
    &\leq \frac{c}{k} (\frac{\eta}{k^2})^{-1} \frac{1}{(1+(\frac{\eta}{l}-t)^2)^2}.
  \end{align*}
  We again observe that the last factor is integrable in time with integral
  bounded by $\frac{\pi}{2}$.
  This concludes the estimate of \eqref{eq:Gthetac}.

  \underline{Estimating \ref{eq:GGd}}
  It remains to estimate
  \begin{align*}
    \int_{t_k}^{T}\exp(-\nu\int_{t}^{T}l^2+(\eta-l\tau)^2 d\tau) \frac{1}{1+(\frac{\eta}{l}-t)^2} c \frac{\eta}{(l\pm 1)^2} \frac{1}{1+(\frac{\eta}{l}-t)^2},
  \end{align*}
  where we argue similarly as in the case of estimate \eqref{eq:Gthetac} and
  bound the exponential by $1$.
  We thus have to estimate
  \begin{align}
    \label{eq:GGdnonu}
    \frac{1}{1+(\frac{\eta}{l}-t)^2} c \frac{\eta}{(l\pm 1)^2} \frac{1}{1+(\frac{\eta}{l}-t)^2},
  \end{align}
  where distinguish two cases.

  If $l\neq k\neq k-1$ all frequencies are non-resonant and we can again bound
  $(\frac{\eta}{l}-t)^2$ and $(\frac{\eta}{l}-t)^2$ from below.
  We may thus bound \eqref{eq:GGdnonu} by a constant times
  \begin{align*}
  c \frac{\eta}{(l\pm 1)^3} \max (\frac{\eta}{l^2}, \frac{\eta}{k^2})^{-4}
    \leq \frac{c}{k} (\frac{\eta}{k^2})^{-3},
  \end{align*}
  and the desired bound again follows by estimating the length of $I_k$ from
  above.

  Suppose that $l=k$ and hence $l\pm 1 \neq k$ (the case $l\pm 1=k$ is analogous). 
  Then we may instead control
  \eqref{eq:GGdnonu} by a constant times
  \begin{align*}
    & \quad  c \frac{\eta}{(k\pm 1)^3} (\frac{\eta}{k^2})^{-2} \frac{1}{1+(\frac{\eta}{k}-t)^2} \\
    &\leq 4 \frac{c}{k} (\frac{\eta}{k^2})^{-1}\frac{1}{1+(\frac{\eta}{k}-t)^2}.
  \end{align*}
  The desired bound then follows by noting that the last factor is integrable in
  time with integral bounded by $\pi$.
\end{proof}

\subsection{Proof of Stability in Theorem \ref{theorem:main}}
\label{sec:proofofmain}
As a final result of this section we combine the estimates of Theorems
\ref{theorem:easy}, \ref{theorem:intermediate} and \ref{theorem:mechanism} to
establish global in time stability.

\begin{proof}[Proof of Theorem \ref{theorem:main}]
As sketched in Section \ref{sec:intro} the strategy of our construction here is
the following:
\begin{itemize}
\item We start with given initial data at time $0$ and control it up to a time
  $t_k$ of size $C\sqrt{\eta}$ by Theorem \ref{theorem:easy}.
\item On the intermediate range of times (for which  $c\frac{\eta}{k^3}\pi \leq 1$) we
  control the evolution by Theorem \ref{theorem:intermediate}.
\item On the time interval $(\frac{\eta}{\sqrt[3]{c\eta \pi}}, 2 \eta)$ we encounter a sequences of
  resonances, each possibly leading to norm inflation by a factor. This corresponds to an echo
  chain
  \[k \mapsto k-1 \mapsto \dots \mapsto 1.\]
  There we use Theorem \ref{theorem:mechanism} to control the growth due to each echo.
\item Finally, after the time $t_0=2 \eta$ the evolution is stable by Theorem
  \ref{theorem:easy} and we have thus established global in time control of
  solutions.
  We thus have constructed global in time solutions
  exhibiting echo chains. 
\end{itemize}

We recall that the coefficient functions of the linearized problem \eqref{eq:linwave} do not depend on
$y$ explicitly and that the problem decouples after a Fourier transform in $y$.
In this proof we will hence consider $\eta \in \R$ as a given parameter, where
the estimate for general data follows by integration with respect to $\eta$.

Let thus $c$ and $\eta$ be given and let $\theta(0), G(0) \in X$. In Theorem \ref{theorem:easy} we have
shown that the evolution is stable globally in time if $c\eta\ll 1$, we hence in the
following without loss of generality restrict to the case $c\eta \gtrsim 1$.
For simplicity of notation let
\begin{align*}
  k_1&\approx \sqrt{8000 c\eta \pi},\\
  k_0&\approx \sqrt{c\eta \pi},
\end{align*}
where we round down to an integer.

Then by Theorem \ref{theorem:easy} it holds that for all $0<t\leq t_{k_1}$
\begin{align*}
40^2 \|\theta(t)\|_{X}^2 + \|G(t)\|_{X}^2 \leq 2 (40^2 \|\theta(0)\|_{X}^2 + \|G(0)\|_{X}^2).  
\end{align*}
In particular, it follows that
\begin{align*}
  \|\theta(t_{k_1})\|_{X}^2 + \|G(t_{k_1})\|_{X}^2 \leq 2 \cdot 40^2 (\|\theta(0)\|_{X}^2 + \|G(0)\|_{X}^2).
\end{align*}
Next, we use Theorem \ref{theorem:intermediate} to show that for $t_{k_1}\leq t
\leq t_{k_0}$ it holds that
\begin{align*}
   \|\theta(t)\|_{X} + \|G(t)\|_X &\leq e^{20 \cdot 3\pi \sqrt[3]{c\eta}} (\|\theta(t_{k_1})\|_{X}^2 + \|G(t_{k_1})\|_{X}^2 ) \\
  &\leq 2 \cdot 40^2 e^{20 \cdot 3\pi \sqrt[3]{c\eta}} (\|\theta(0)\|_{X}^2 + \|G(0)\|_{X}^2).
\end{align*}
Next, on the time interval $(t_{k_0}, t_{0}=2\eta)$, we have shown in Theorem
\ref{theorem:mechanism} that our solution grows at most by a factor
\begin{align*}
  \prod_{k=1}^{k_0} 2\frac{c\eta\pi}{k^3} = \frac{(2c\eta\pi)^{k_0}}{(k_0!)^3}\leq \frac{C}{(c\eta)^{3/2}} e^{3 \sqrt[3]{2c\eta\pi}},
\end{align*}
where we used Stirling's approximation to approximate
\begin{align*}
  k_0! \sim \sqrt{2\pi k_0} k_0^{k_0} e^{-k_0}
\end{align*}
in the last step. Therefore $k_0^{3k_0}$ and $(2c\eta\pi)^{k_0}$
then cancel by our choice of $k_0$ and a bound in terms of $e^{3k_0}$ remains.

In particular, it follows that
\begin{align*}
  \|\theta(t_0)\|_{X} + \|G(t_0)\|_X &\leq (c\eta)^{-3/2}  e^{\sqrt[3]{2c\eta\pi}} (\|\theta(t_{k_0})\|_{X} + \|G(t_{k_0})\|_X)\\
  &\leq  2(c\eta)^{-3/2} 40^2 e^{200 \sqrt[3]{c\eta}} (\|\theta(0)\|_{X}^2 + \|G(0)\|_{X}^2).
\end{align*}
Finally, by Theorem \ref{theorem:easy}, for all times $t\geq t_0=2 \eta$ it
holds that
\begin{align*}
  \|\theta(t)\|_{X} + \|G(t)\|_X &\leq 1.1 (\|\theta(t_0)\|_{X} + \|G(t_0)\|_X)\\
  &\leq \frac{1.1}{(c\eta)^{3/2}} 2 \cdot 40^2 e^{200\sqrt[3]{c\eta}} (\|\theta(0)\|_{X}^2 + \|G(0)\|_{X}^2),
\end{align*}
which concludes the proof.

The evolution preserves Gevrey 3 regularity with a possible loss of constant.
\end{proof}

Having established this stability result in Gevrey $3$ regularity in the
following we show that the estimate is optimal (up to the choice of
constant).
More precisely, we construct initial data which achieves growth at least by
\begin{align*}
  e^{\sqrt[3]{c\eta}}.
\end{align*}

\section{Echo Chains and Blow-up}
\label{sec:blow-up}

As a complementary result to the stability estimates for initial data in a
Gevrey $3$ class with large constant, we show that there exists data in a
critical Gevrey class that not only achieves norm inflation but blow-up in
Sobolev regularity as time tends to infinity.

\begin{theorem}
  \label{theorem:blow-up}
  Let $c, \nu>0$ be as in Theorem \ref{theorem:main} and suppose that
  $\sqrt[3]{c\eta}\gg \nu^{-1/2}$.
  Let further
  \begin{align*}
    k_2 \approx \frac{1}{10} \sqrt[3]{c\eta \frac{\pi}{2}}
  \end{align*}
  (rounded down).
  Then the solution of \eqref{eq:simplewave2} with initial data
  \begin{align*}
    \theta_l(0)=\delta_{lk_2},\ G(0)=0
  \end{align*}
  satisfies
\begin{align*}
  \exp(\sqrt[3]{c\eta}) \|\theta(0)\|_X\leq \|\theta(t)\|_X \leq \|\theta(t)\|_X + \|G(t)\|_X\leq \exp(50 \sqrt[3]{c\eta}) \|\theta(0)\|_X
\end{align*}
for all $t>2 \eta$ and $\theta(t)$ converges in $X$ as $t\rightarrow \infty$.
There thus exist global in time, asymptotically stable solutions achieving norm inflation.

Moreover, when considering the $y$-dependent formulation \eqref{eq:simplewave},
for each $\sigma \in \R$ there exists initial data $\theta(0) \in
  \mathcal{G}_{3} X$, $G(0)=0$, such that $\theta(t)$ converges in $H^{s}X$
  for all $s< \sigma$, but diverges to infinity in $H^{s}X$ for all $s>\sigma$.
\end{theorem}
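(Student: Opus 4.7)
The plan is to prove the theorem in two stages. First I would establish the sharp norm inflation for the building block with $\theta_l(0) = \delta_{l k_2}$, $G(0) = 0$ by iterating the resonance mechanism of Theorem \ref{theorem:mechanism} across the echo chain $I_{k_2}, I_{k_2-1}, \ldots, I_1$. At step $k$ the mode-wise estimate \ref{item:B2} of Proposition \ref{prop:res} gives the dominant transfer
\begin{align*}
\theta_{k-1}(t_{k-1}) \approx \int_{t_k}^{t_{k-1}} c_{k-1}^{-}\, dt \cdot \theta_k(t_k) \approx \frac{c\eta\pi}{k^3}\, \theta_k(t_k),
\end{align*}
with additive error of size $\frac{0.5}{k} c\frac{\eta}{k^3}$ and a small $G_k$-contribution. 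Combining with \ref{item:B1}, which preserves the resonant mode up to a factor $1 - O(\tfrac{c}{k}(\eta/k^2)^{-1})$ per step (telescoping to a bounded factor), the iteration from $k_2$ down to $1$ produces
\begin{align*}
|\theta_1(t_0)| \gtrsim \prod_{k=1}^{k_2} \frac{c\eta\pi}{k^3} = \frac{(c\eta\pi)^{k_2}}{(k_2!)^3}.
\end{align*}
By Stirling this equals $(c\eta\pi/k_2^3)^{k_2}\, e^{3k_2}$ up to polynomial factors, and the choice $k_2 \approx \tfrac{1}{10}\sqrt[3]{c\eta\pi/2}$ makes $c\eta\pi/k_2^3 \approx 2000$, so the product is at least $\exp(\sqrt[3]{c\eta})$. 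The matching upper bound $\exp(50\sqrt[3]{c\eta})$ is exactly the global stability bound from Theorem \ref{theorem:main}, and convergence of $\theta(t)$ in $X$ as $t\to\infty$ follows from the second bullet of Theorem \ref{theorem:easy}, which gives $\|\theta(t) - \theta(2\eta)\|_X \lesssim c\eta^{-1}(\|\theta(2\eta)\|_X + \|G(2\eta)\|_X)$ and hence the Cauchy property.

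For the blow-up statement I exploit the fact that the coefficient functions in \eqref{eq:simplewave} are independent of $y$, so after Fourier transform in $y$ the equations decouple and building blocks at distinct $y$-frequencies evolve independently. Pick $\eta_n = 2^n$ and let $\theta^{(n)}(0,x,y) = \theta^{(n)}_0(x)\, e^{i\eta_n y}$, where $\theta^{(n)}_0$ is the first-stage building block with parameter $\eta_n$ and $x$-mode $k_2(\eta_n) \approx \tfrac{1}{10}\sqrt[3]{c\eta_n\pi/2}$. Set
\begin{align*}
\theta(0,x,y) = \sum_n a_n\, \theta^{(n)}(0,x,y), \qquad G(0) = 0, \qquad a_n = n^{-1} \eta_n^{-\sigma} \exp(-\sqrt[3]{c\eta_n}).
\end{align*}
Any constant $C' < \sqrt[3]{c}$ makes the initial Gevrey $3$ norm $\sum_n a_n^2 \exp(2C'\sqrt[3]{\eta_n}) \|\theta^{(n)}_0\|_X^2 \sim \sum_n n^{-2}\eta_n^{-2\sigma}\exp(2(C'-\sqrt[3]{c})\sqrt[3]{\eta_n})$ finite. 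By the $y$-decoupling and Plancherel,
\begin{align*}
\|\theta(t)\|_{H^s X}^2 \sim \sum_n a_n^2 \eta_n^{2s}\, \|\theta^{(n)}(t)\|_X^2,
\end{align*}
and for those $n$ with $2\eta_n < t$ the first-stage lower bound yields $a_n^2 \eta_n^{2s}\,\|\theta^{(n)}(t)\|_X^2 \gtrsim n^{-2}\eta_n^{2(s-\sigma)}$. This series is summable uniformly in $t$ for $s < \sigma$, and combined with the Cauchy property of each block gives convergence in $H^s X$ by dominated convergence; for $s > \sigma$, as $t\to\infty$ more and more blocks activate and the partial sums diverge.

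The hard part is the mode-wise lower bound of stage one: one must verify that the leading term in \ref{item:B2} actually survives the iteration, i.e.\ that $\theta_k(t_k)$ stays comparable to $\prod_{j=k+1}^{k_2}\frac{c\eta\pi}{j^3}\theta_{k_2}(t_{k_2})$ rather than being cancelled by error terms, by the $G_k$-feedback, or drained by the $\theta_{k+1}$ channel already populated in previous steps. I would handle this by induction along the chain, using \ref{item:B1} to control the relative loss of the resonant mode per step and \ref{item:B4}--\ref{item:B5} to show the $G$-contribution enters as a controllable perturbation of known sign rather than a cancellation. Upgrading these mode-wise $\ell^\infty$ bounds to lower bounds in the Hilbert space $X$ uses the same Schur-type argument as in the proof of Theorem \ref{theorem:mechanism}, after which the superposition step is straightforward bookkeeping against the chosen weights $a_n$.
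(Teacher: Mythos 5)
Your overall architecture (frequency-localized building block achieving norm inflation via the echo chain, then a superposition over $y$-frequencies with Gevrey-normalized weights) matches the paper, but two of your key steps have genuine gaps. First, your chain is run from $k_2$ all the way down to $k=1$ by iterating the mode-wise bounds of Proposition \ref{prop:res}, with the cancellation question dismissed by claiming the $G_k$-feedback has ``known sign''. This is exactly the point where the paper has to work: in the $\theta_{k\pm1}$ equation the resonant driving $\int c_{k\pm1}^{\mp}\theta_k$ can in principle be cancelled by $\int d_{k\pm1}^{\mp}G_k$ (the paper exhibits the borderline case $G_k=-\tfrac{1}{1+(\eta/k-t)^2}\theta_k$), the coefficients contain factors $f\tfrac{\nu}{g}il$ so the modes acquire phases and no naive sign argument is available, and the only mechanism used in the paper to exclude cancellation is viscous dissipation, which requires $\nu k^2\geq 4$ (Proposition \ref{prop:inductionblow}). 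That constraint is precisely why the hypothesis $\sqrt[3]{c\eta}\gg\nu^{-1/2}$ appears, why the lower-bound chain is stopped at $k_3=k_0/1000$ rather than at $k=1$, and why a separate persistence argument (Lemma \ref{lemma:persist}) is then needed; for $k\lesssim\nu^{-1/2}$ your iteration simply cannot be closed with the cited tools. Second, you start the iteration at $t_{k_2}$ as if the data were still $\delta_{lk_2}$ there, but the data is prescribed at $t=0$ and, since time cannot be inverted in the viscous problem, you must prove that the solution stays sharply concentrated on mode $k_2$ with $G$ much smaller than $\theta_{k_2}$ through $(0,t_{k_2})$ — an interval which already contains resonant intervals $I_k$, $k_2<k\leq k_0$, with amplification factors up to order $10^3$. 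The global stability theorems you invoke give only $X$-norm upper bounds, not the concentration needed to launch the induction hypothesis \eqref{eq:inductiongrowth}; the paper devotes Lemma \ref{lemma:smalltime} (contraction in weighted $\ell^\infty$) and Lemma \ref{lemma:inductionupper} (the moving ``dent'' bounds) to exactly this step.

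There is also a defect in your superposition: you normalize the blocks by $\exp(-\sqrt[3]{c\eta_n})$, which is only the \emph{lower} bound on their eventual size, while the available uniform upper bound is $\exp(50\sqrt[3]{c\eta_n})$. Your claim that the series for $\|\theta(t)\|_{H^sX}^2$ converges for $s<\sigma$ rests on summability of a lower bound for its terms, which proves nothing; with your weights the $s<\sigma$ sum could diverge. The paper avoids this by normalizing each block by its actual asymptotic norm $\psi(\eta)=\|\theta^\infty[\eta]\|_X$ (which exists by the asymptotic stability of each block), so that the pointwise-in-frequency limit is exactly a prescribed $H^\sigma$ profile; convergence for $s<\sigma$ and divergence for $s>\sigma$ then both follow. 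Your construction can be repaired by adopting that normalization, but as written the convergence half of the blow-up statement is not established.
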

\begin{figure}[htb]
  \centering
  \includegraphics[width=0.5\linewidth]{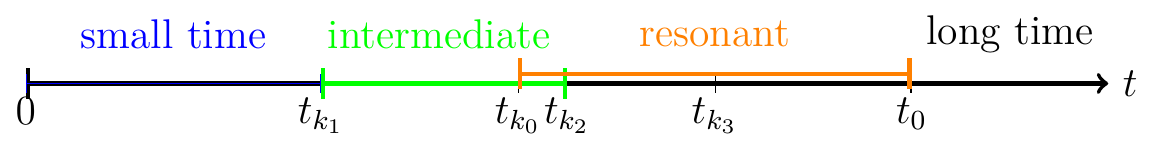}
  \caption{The time regimes considered in Section \ref{sec:blow-up}. Compared to
  Section \ref{sec:stability} we here allow an overlap of the intermediate and
  resonant regime. Furthermore, we divide the resonant regime into a part where
  $k\geq k_3$ is still large and one where $k< k_3$ is possibly small.}
  \label{fig:regimes2}
\end{figure}
We remark that the asymptotic stability of solutions has already been
establish in Section \ref{sec:small}. The effort of this section lies in the construction of the global in time
solutions exhibiting norm inflation and showing that lower bounds persist for
all times. Indeed, given such solutions we can construct solutions exhibiting blow-up as follows:
\begin{proof}[Proof of the blow-up result of Theorem \ref{theorem:blow-up}]
  Let $c, \nu>0$ be given and suppose that for all $\eta$ as in Theorem \ref{theorem:blow-up} there exist
  initial data $\theta[\eta](0)$ with $\|\theta[\eta](0)\|_X=1$ such that the associated evolution is
  asymptotically stable and such that
  \begin{align*}
    \theta^\infty[\eta]:=\theta[\eta](t)
  \end{align*}
  satisfies $\|\theta^{\infty}[\eta]\|_X =: \psi(\eta)\geq
  \exp(\sqrt[3]{c\eta})$.

  Then for given $\sigma \in \R$ there exists a density $\rho \in H^{\sigma}$
  with $\rho \not \in H^{s}$ for any $s>\sigma$ and such that the support of its
  Fourier transform is contained in the set $\{\eta: \sqrt[3]{c|\eta|}\gg
  \nu^{-1/2}\}$.
  For instance, such data can be explicitly constructed in Fourier space in terms of
  $|\eta|^{\alpha}|\log(\eta)|^{\beta}$ for suitable $\alpha, \beta$.
  
  We then consider the initial $\theta(0)$ with Fourier transform given by
  \begin{align*}
   \mathcal{F}(\theta)(0):= \frac{1}{\psi(\eta)} \mathcal{F}(\rho)(\eta) \theta[\eta](0).
  \end{align*}
  Since $\|\theta[\eta](0)\|_X=1$ and $\frac{1}{\psi(\eta)}\leq
  e^{-\sqrt[3]{c\eta}}$, clearly $\theta(0) \in \mathcal{G}_{3}X$.

  Moreover, by the asymptotic stability of the
  frequency-localized initial data it holds that
  \begin{align*}
    \mathcal{F}(\theta)(t) \rightarrow \mathcal{F}(\rho)(\eta) \frac{\theta^{\infty}[\eta]}{\psi(\eta)}.
  \end{align*}
  pointwise in $\eta$.
  By definition of $\psi(\eta)$ the last factor is normalized in $X$ and hence
  this pointwise (in frequency) limit is an element of $H^{\sigma}X$.
  In particular, by compactness of the embedding $H^{\sigma}\subset H^{s}$ for
  $s<\sigma$ we obtained the claimed convergence in $H^sX$ for $s<\sigma$. Since $\rho \not \in H^s$ for $s>\sigma$ we also obtain divergence in $H^s$, $s>\sigma$.
  This concludes the proof of the blow-up construction.
\end{proof}

Our main aim in the remainder of this section is thus to construct global in
time solutions for given $\eta$ which achieve the desired norm inflation.
As discussed in the heuristic model of Section \ref{sec:wave} the main
growth is expected to happen in the resonant time regime $(t_{k_0}, t_0=2\eta)$,
where
\begin{align*}
  k_0=\sqrt[3]{c\eta \frac{\pi}{2}}.
\end{align*}
For technical reasons we do not consider the extremal case of a full chain
starting at frequency $k_0$, but instead begin at frequency $k_2=\frac{k_0}{4}$
and only establish lower bounds on the norm inflation until the time
$k_3=\frac{k_0}{1000}$.

The corresponding time regimes and behavior of the solution are described in
more detail in the following proposition, which thus states the main steps of
the proof of Theorem \ref{theorem:blow-up}.

\begin{prop}
  \label{prop:timeregimes}
Let $c, \eta, \nu$ be given and assume that $\sqrt[3]{c\eta}\gg \max(\nu^{-1},1)$ (i.e.
choose $\eta$ large enough).
Furthermore define the following threshold values:
\begin{align*}
  k_0= \sqrt[3]{c\eta \pi}, \\
  k_1 = 4 k_0, \\
  k_2 = \frac{k_0}{10}, \\
  k_3 = \frac{k_0}{1000}.
\end{align*}
Then the solution with initial data
\begin{align*}
  \theta_l(0)= \delta_{l k_2}.
\end{align*}
has the following properties:
\begin{itemize}
\item At time $t_{k_1}$ it holds that
  \begin{align*}
    |\theta_{k_2}-1|&\leq c^2 (\frac{\eta}{k_1^2})^{-3}, \\
    |\theta_{l}| &\leq  c (\frac{\eta}{k^2})^{-2} \prod_{j=l}^{k_2} c(\frac{\eta}{j^2})^{-1}, \text{ if } l\leq k_1, \\
    |\theta_{l}|&\leq 2^{-|l-k_1|}  c (\frac{\eta}{k_1^2})^{-2}\prod_{j=k_1}^{k_2} c(\frac{\eta}{j^2})^{-1}.
  \end{align*}
  Thus at time $t_{k_1}$ the mode $k_2$ is by far the largest (the factor $c
  (\frac{\eta}{k_1^2})^{-2}\ll 1$) and we have very rapid decay of all other
  modes. 
\item At time $t_{k_2}$ it holds that
  \begin{align*}
    |\theta_{k_2}-1| &\leq c (\frac{\eta}{k_1^2})^{-1} e^{2}, \\
    |\theta_{l}| &\leq  c (\frac{\eta}{k_1^2})^{-1} e^{2}, \text{ if } l\geq k_2, \\
   |\theta_{l}|&\leq  c (\frac{\eta}{k_1^2})^{-2} e^2 \prod_{j=l}^{k_2} c(\frac{\eta}{j^2})^{-1} , \text{ if } l < k_2.
  \end{align*}
  Thus at time time still $\theta_{k_2}$ is the largest mode. Furthermore, while
  modes $l>k_2$ do not exhibit decay in $|l-k_2|$ anymore, this is still the
  case for $l<k_2$.
\item At the time $t_{k_3}$  it holds that
  \begin{align*}
    |\theta_{k_3}| | &\geq e^{k_0}, \\
    | \theta_{k_3-2}|&\geq e^{k_0}, \\
    |\theta_{l}|&\leq \frac{1}{1000} |\theta_{k_3}| \text{ for all } l \not \in \{k_3-1, k_3+1\}.
  \end{align*}
  At time $t_{k_3}$ the modes $\theta_{k_3}$ and $\theta_{k_3-2}$ are by far the
  largest modes and have achieved significant norm inflation.
\item For all times $t\geq t_0=2\eta$ it holds that
  \begin{align*}
    | \theta_{k_3-2}(t)|\geq e^{k_0}.
  \end{align*}
  While other modes might have grown even more, this growth persists. In
  particular $\|\theta(t)\|_{\ell^2} \geq e^{k_0} \|\theta(0)\|_{\ell^2}$.
\end{itemize}
\end{prop}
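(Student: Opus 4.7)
The proposition is an assertion about a specific frequency-localized datum, partitioned into four sequential time regimes. My plan is to prove each regime in turn using the tools already assembled in Section \ref{sec:stability}, working in weighted $\ell^\infty$ spaces whose weights reflect the claimed mode-wise decay and then translating to $X$ via Schur's test exactly as in the proof of Theorem \ref{theorem:mechanism}. The formal structure is a bootstrap: at each time $t_{k_j}$ I establish the claimed estimates by assuming them on the preceding interval and checking self-improvement using the integral equations \eqref{eq:integralequations}.

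For the small-time regime $(0,t_{k_1})$, I would define weights $\lambda(l)$ that decay geometrically away from $l=k_2$ at rate $c(\eta/k^2)^{-1}$, and run a contraction-mapping argument on the modified unknowns $\lambda(l)\theta_l$, $\lambda(l) G_l$ completely analogous to the initial bootstrap of Proposition \ref{prop:res}. The key input is that throughout $[0,t_{k_1}]$ every coefficient $c_l^\pm, d_l^\pm$ satisfies $\int c_l^\pm,\int d_l^\pm\lesssim c(\eta/k^2)^{-2}\ll 1$, so nearest-neighbor leakage from $k_2$ is tiny and summable, yielding the asserted rapid off-diagonal decay and $|\theta_{k_2}-1|\leq c^2(\eta/k_1^2)^{-3}$.

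For the intermediate regime $(t_{k_1},t_{k_2})$ and the resonant regime $(t_{k_2},t_{k_3})$ I would iterate over the intervals $I_{k_1},I_{k_1-1},\ldots,I_{k_3+1}$ in succession. On each $I_k$ with $k\in[k_2+1,k_1]$ the factor $c\eta/k^3\pi\leq 1$ is mild, so the growth of $\theta_{k_2}$ is controlled by the Lyapunov multiplier of Theorem \ref{theorem:intermediate} and the decay structure for $l<k_2$ propagates by the same weighted bootstrap as above. On each $I_k$ with $k\in[k_3+1,k_2]$ I apply the mode-wise bound
\begin{align*}
\bigl|\theta_{k\pm 1}(t_{k-1})-\theta_{k\pm 1}(t_k)-c\tfrac{\eta}{k^3}\pi\,\theta_k(t_k)\bigr|\leq \tfrac{1}{k}c\tfrac{\eta}{k^3}\pi\bigl(\|\theta^\star\|_X+k\|G^\star\|_X\bigr)
\end{align*}
of Theorem \ref{theorem:mechanism}, together with the corresponding estimates on $\theta_k$ and $G_k$. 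A bootstrap in which at each $t_k$ the mode $k$ carries amplitude bounded below by $\tfrac12\prod_{j=k+1}^{k_2}c\eta/j^3\pi$ and other modes satisfy comparable upper bounds lets me iterate; Stirling's approximation then turns $\prod_{k=k_3+1}^{k_2}c\eta/k^3\pi$ into $\exp(c_0\sqrt[3]{c\eta})$ for the desired $e^{k_0}$. Finally, for $t>2\eta$ I invoke Theorem \ref{theorem:easy}: the $L^1_t$-integrable source bound gives $\|\theta(t)-\theta(t_0)\|_X\lesssim c\eta^{-1}(\|\theta(t_0)\|_X+\|G(t_0)\|_X)$, so every mode—including $\theta_{k_3-2}$—retains its magnitude up to a factor $1+O(c\eta^{-1})$, yielding convergence in $X$ together with the persistence of $|\theta_{k_3-2}(t)|\geq e^{k_0}$.

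The main obstacle lies in the resonant regime: Theorem \ref{theorem:mechanism} was stated as a \emph{one-step} estimate and produces both a main term and an error, and the error at step $k$ involves $\|\theta^\star\|_X+k\|G^\star\|_X$ evaluated on the full current state. When iterated across $k_2-k_3\approx k_0$ intervals, these errors risk competing with the main product, so I must track at each step not just the largest mode but the full profile and keep the error at mode $k-1$ strictly subleading to the main resonant contribution $c\eta/k^3\pi\,\theta_k$. This is precisely why the proposition stops the chain at $k_3=k_0/1000$ rather than at $k=1$: the prefactor $1/k$ in the error bound needs $k$ to remain comfortably large for the inequality to close, and stopping early ensures the error never catches up with the main product. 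Once this bookkeeping is done carefully, the stated lower bounds follow, completing the four-regime construction.
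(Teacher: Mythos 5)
Your overall architecture (contraction mapping for small times, interval-by-interval bootstrap through the resonant chain, stability after $t_0$) mirrors the paper's, but the core of the proposition is the \emph{lower} bound in the resonant regime, and there your argument has a genuine gap. Theorem \ref{theorem:mechanism} and its mode-wise estimate are upper bounds; to turn them into a lower bound for $\theta_{k\pm1}(t_{k-1})$ you must rule out cancellation between the two resonant contributions $\int c_{k\pm1}^{\mp}\,\theta_k$ and $\int d_{k\pm1}^{\mp}\,G_k$, which are of the \emph{same} order $c\eta\pi/k^3$ because $G_k$ is itself driven up to size comparable to $\theta_k$ by the forcing $2\frac{(\eta/k-t)}{(1+(\eta/k-t)^2)^2}\theta_k$. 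The paper's Proposition \ref{prop:inductionblow} shows that the choice $G_k(t)=-\frac{1}{1+(\eta/k-t)^2}\theta_k(t)$ would cancel the main term exactly, and excludes this only by using the viscous dissipation under the hypothesis $\nu k^2\geq 4$ (which also forces the propagated condition $|\theta_k(t_k)|\geq 4|G_k(t_k)|$). Your proposal never invokes dissipation in the lower-bound step and never addresses this cancellation; correspondingly, your explanation for stopping the chain at $k_3=k_0/1000$ (the $1/k$ prefactor in the error of Theorem \ref{theorem:mechanism}) is not the actual obstruction — the chain is cut at $k_3$ precisely so that $\nu k^2\gtrsim 1$ holds on every interval where a lower bound is claimed.

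Two further points would need repair. First, in the regime $(t_{k_1},t_{k_2})$ your premise that ``$c\eta\pi/k^3\leq 1$ is mild'' is false for $k<k_0$: at $k$ near $k_2=k_0/10$ the factor is about $10^3$. The per-interval Lyapunov bound of Theorem \ref{theorem:intermediate} ($e^{3\pi}$ per interval, hence $e^{C\sqrt[3]{c\eta}}$ overall) is therefore far too lossy to keep $|\theta_{k_2}-1|\lesssim c(\eta/k_1^2)^{-1}$; the paper instead runs the mode-wise induction of Lemma \ref{lemma:inductionupper}, whose moving ``dent'' bookkeeping is exactly what lets resonant factors up to $10^3$ act only on modes whose current amplitudes are already small enough. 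Second, the final bullet concerns all $t\geq t_0=2\eta$, but between $t_{k_3}$ and $t_0$ the chain continues through the intervals $I_k$, $k<k_3$, and you must show these later resonances cannot destroy the inflated mode (the paper's Lemma \ref{lemma:persist}, again a dent-type upper-bound induction); invoking Theorem \ref{theorem:easy} only for $t>2\eta$ leaves this stretch uncovered.
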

Since each time regime requires rather different techniques, we discuss the
regimes in different subsections.
The corresponding estimates on $G$ are then included in the respective Lemmas
\ref{lemma:smalltime}, \ref{lemma:inductionupper}, \ref{lemma:persist} and Proposition \ref{prop:inductionblow}.

\subsection{The Small Time Regime and Contraction Mappings}
\label{sec:contract}
In this section we consider the evolution of $\theta$ and $G$ in the small time
regime
\begin{align*}
  (0, t_{k_1})\approx (0, \frac{1}{10} \frac{\eta}{\sqrt[3]{c\eta\pi/2}}).
\end{align*}
We note that this choice of time interval implies that
\begin{align*}
  \frac{c}{\eta}{k^3}\leq 10^{-3}=0.001
\end{align*}
for all $k\geq k_1$ and that for $k$ smaller than this $\frac{\eta}{k}$ is not
part of this interval.
We will show that this implies that the associated integral equation for the
modes $\theta_{l}, \frac{1}{10}G_l$ is a contraction mapping in $L^\infty \ell^\infty$ on
this interval. The resulting $\ell^\infty$ bound is then subsequently improved
to the weighted decay estimate of Proposition \ref{prop:timeregimes}.

\begin{lemma}
  \label{lemma:smalltime}
  Let $\eta,c\nu, k_1, k_2$ be as in Proposition \ref{prop:timeregimes} and
  consider the solution of \eqref{eq:simplewave2} with initial data
  \begin{align*}
    \theta_l(0)=\delta_{lk_2},\ G(0)=0.
  \end{align*}
  Then at the time $t_{k_1}$ it holds that
    \begin{align*}
    |\theta_{k_2}-1|&\leq c^2 (\frac{\eta}{k_1^2})^{-2}, \\
    |\theta_{l}| &\leq  c (\frac{\eta}{k^2})^{-2} \prod_{j=l}^{k_2} c(\frac{\eta}{j^2})^{-1}, \text{ if } l\leq k_1, \\
      |\theta_{l}|&\leq 2^{-|l-k_1|}  c (\frac{\eta}{k_1^2})^{-2}\prod_{j=k_1}^{k_2} c(\frac{\eta}{j^2})^{-1}, \text{ if } l\neq k_2, l \geq k_1, \\
      |G_{l}|&\leq 10 c (\frac{\eta}{k^2})^{-2} \prod_{j=l}^{k_2} c(\frac{\eta}{j^2})^{-1}, \text{ if } l\leq k_1, l\neq k_2,\\
      |G_{l}|&\leq 10 \cdot 2^{-|l-k_1|}  c (\frac{\eta}{k_1^2})^{-2}\prod_{j=k_1}^{k_2} c(\frac{\eta}{j^2})^{-1}, \text{ if } l \geq k_1, \\
      |G_{k_2}| & \leq c^2 (\frac{\eta}{k_1^2})^{-2}.
  \end{align*}
\end{lemma}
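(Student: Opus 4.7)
The plan is to argue by a bootstrap/contraction mapping for the integral formulation \eqref{eq:integralequations} posed on the interval $(0,t_{k_1})$, using that this interval lies strictly \emph{before} any resonant time $\eta/l$ of interest: since $t_{k_1}\approx \eta/(2k_1)$ and $k_1=4k_0\gg k_2$, for every $l\le k_1$ one has $|\eta/l-t|\gtrsim \eta/l\gtrsim \eta/k_1^2$ on the whole interval, while for $l\ge k_1$ the $\ell^\infty$-type smallness is produced by the factor $c\eta/l^3\le 0.001$. The statement is a ``slowly-spreading'' bound starting from the $\delta_{lk_2}$ datum.

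First I would introduce the weight
\[
w_l:=\begin{cases}
c\bigl(\tfrac{\eta}{l^2}\bigr)^{-2}\prod_{j=l}^{k_2}c\bigl(\tfrac{\eta}{j^2}\bigr)^{-1}, & l\le k_1,\ l\ne k_2,\\
2^{-|l-k_1|}c\bigl(\tfrac{\eta}{k_1^2}\bigr)^{-2}\prod_{j=k_1}^{k_2}c\bigl(\tfrac{\eta}{j^2}\bigr)^{-1}, & l\ge k_1,
\end{cases}
\]
with $w_{k_2}:=1$, and work with the rescaled unknowns $\tilde\theta_l=(\theta_l-\delta_{lk_2})/w_l$ and $\tilde G_l=G_l/(10\,w_l)$. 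By design, the successive ratios $w_{l\pm 1}/w_l$ are comparable to either $c(\eta/l^2)^{-1}$ (in the ``algebraic'' range $l\le k_1$) or to $\tfrac12$ (in the ``geometric'' range $l\ge k_1$), which are the factors one should lose per nearest-neighbor step.

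Next I would bound the time integrals of the kernels appearing in \eqref{eq:integralequations} on $(0,t_{k_1})$, arguing as in Lemma \ref{lemma:coeffestimates} but with the improved non-resonant lower bound $|\eta/l-t|\gtrsim \max(\eta/l,\eta/k_1^2)$. This gives, for all $l$ of interest,
\[
\int_0^{t_{k_1}}c_l^\pm\,dt\ \lesssim\ \frac{c}{l}\bigl(\tfrac{\eta}{l^2}\bigr)^{-3},\qquad
\int_0^{t_{k_1}}d_l^\pm\,dt\ \lesssim\ \frac{c}{l}\bigl(\tfrac{\eta}{l^2}\bigr)^{-1},
\]
and (using either the dissipation factor $\exp(-\nu\int(l^2+(\eta-l\tau)^2))$ or the smallness of $\nu f\le c(1+t^2)^{-1}$) the corresponding $L^1$ bounds for the kernels in the $G_l$ equation, all of which are absorbable into $w_{l\pm 1}/w_l$ with a factor $\ll 1$ to spare. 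This verifies that the nonlinear integral map $\Phi:(\tilde\theta,\tilde G)\mapsto$ RHS of \eqref{eq:integralequations} maps a ball of radius $2$ in $L^\infty_t\ell^\infty_l$ into itself and is a contraction, with source term $(\delta_{l k_2}\text{-contribution})/w_l$ of order $1$ only for $l\in\{k_2-1,k_2+1\}$. The fixed point then satisfies $\|\tilde\theta\|_{L^\infty\ell^\infty}\le 1$ and $\|\tilde G\|_{L^\infty\ell^\infty}\le 1$, which is exactly the claimed bounds on $\theta_l$ and $G_l$ for $l\ne k_2$. For $l=k_2$, a direct integration of the $\theta_{k_2}$ equation, using the already-established bounds on $\theta_{k_2\pm 1}$ and $G_{k_2\pm 1}$ together with $\int c_{k_2}^\pm\lesssim c(\eta/k_2^2)^{-3}/k_2$, yields $|\theta_{k_2}-1|\le c^2(\eta/k_1^2)^{-2}$ and the analogous bound on $G_{k_2}$.

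The main obstacle I anticipate is the transition at $l\approx k_1$: for $l$ slightly below $k_1$ the ``per-step cost'' $w_{l-1}/w_l\sim c(\eta/l^2)^{-1}$ is already very small, whereas for $l$ slightly above $k_1$ it jumps to $1/2$, so one must check that the integrated coefficient bounds on $c_l^\pm,d_l^\pm$ are genuinely $\lesssim 1/2$ in this transition range (where, conveniently, $c\eta/l^3\le 10^{-3}$). A secondary subtlety is the $G$-equation source $\frac{2(\eta/l-t)}{(1+(\eta/l-t)^2)^2}\theta_l$ for $l=k_2$: since we are strictly pre-resonant, $|\eta/k_2-t|\gtrsim \eta/k_2$ so this kernel is pointwise of size $(\eta/k_2)^{-3}$, giving an $L^1$-in-$t$ contribution $\lesssim (\eta/k_2^2)^{-2}$ and hence $|G_{k_2}|\le c^2(\eta/k_1^2)^{-2}$ as claimed. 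Once these two points are verified, all remaining modes are covered uniformly by the fixed-point argument.
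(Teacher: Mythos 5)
Your overall plan (integral formulation, pre-resonant coefficient bounds, fixed point in a weighted $L^\infty\ell^\infty$ space) is close in spirit to the paper, but the central claim of your scheme — that every kernel's $L^1_t$ bound is absorbable into the weight ratio $w_{l\pm1}/w_l$ with room to spare, so that the map is a contraction in the weighted space — is false for the $G\to\theta$ couplings at frequencies far below $k_2$. Concretely, for $l<k_2$ the neighbor $l+1$ carries the larger weight, with $w_{l+1}/w_l\sim \eta/(c\,l^2)$, while the coefficient $d_l^{+}$ has only one power of the resonance denominator; on $(0,t_{k_1})$ one has $\int_0^{t_{k_1}}d_l^{+}\,dt\approx c\,\eta(l+1)^{-3}\cdot t_{k_1}\,((l+1)/\eta)^2\sim c/((l+1)k_1)$ (even your cruder bound gives $c\,l/\eta$). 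Multiplying by your $G$-weight $10\,w_{l+1}$ and dividing by $w_l$, the entry of the iteration matrix feeding $\tilde G_{l+1}$ into $\tilde\theta_l$ is of size $10\,\eta/\bigl(l^2(l+1)k_1\bigr)$ (respectively $10/l$ with your bound), which for small $l$ — say $l=1$, giving roughly $5\eta/k_1\sim 5\,t_{k_1}\gg1$ — is enormous. So the map is not a contraction in your weighted ball, and the fixed-point argument as proposed does not close. The root cause is that you gave $G$ the weight $10\,w_l$, i.e. proportional to the $\theta$-weight: in the pre-resonant regime $G_l$ is in fact much smaller (the kernel $2(\eta/l-t)(1+(\eta/l-t)^2)^{-2}$ driving $G_l$ from $\theta_l$ integrates to $\lesssim (l/\eta)^2$, and one may also use the dissipation factor), and it is exactly this extra smallness that compensates the weak decay of $d_l^\pm$; a uniform factor $10$ discards it.

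The lemma itself is fine (the stated $G$-bounds are simply lossy), but a correct proof must either build the extra pre-resonant suppression into the $G$-weight, or avoid a one-shot weighted contraction altogether. The paper does the latter: it first runs an \emph{unweighted} contraction for $(\theta-\delta_{\cdot k_2},\,G/10)$ in $L^\infty\ell^\infty$, which works because all integrated coefficients are uniformly small ($\leq 0.1$ after the $G/10$ rescaling), yielding the rough bound $\|\theta-\delta_{\cdot k_2}\|_{\ell^\infty}+\tfrac1{10}\|G\|_{\ell^\infty}\lesssim(\eta/k_2)^{-2}$; it then repeatedly re-inserts this information into the integral equations, improving the bound one mode further away from $k_2$ per insertion (using the $G$-equation, not a proportional weight, to bound $G$ at each step) and letting the number of insertions tend to infinity to obtain the full decay profile. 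If you want to keep a single weighted fixed point, you would need to replace $10\,w_l$ by a $G$-weight of the form $w_l\cdot\min\bigl(1,(1+(\eta/l-t_{k_1})^2)^{-1}\bigr)$ (or at least $w_l\,(\eta/k_1^2)^{-2}$) and redo the transition checks at $l\approx k_1$ and $l\approx k_2$; as written, your argument has a genuine gap at the low modes $1\leq l\ll k_2$.
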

We thus observe that at time $t_{k_1}$ our data exhibits a sharp concentration
on the mode $\theta_{k_2}$ (see Figure \ref{fig:peak} for an illustration). Moreover, the exponential decay in terms of
$|k-k_2|$ is stronger than possible growth by $\frac{c\eta}{k^3}$ due to the
resonance mechanism.
\begin{figure}[htbp]
  \centering
  \includegraphics[width=0.5\linewidth]{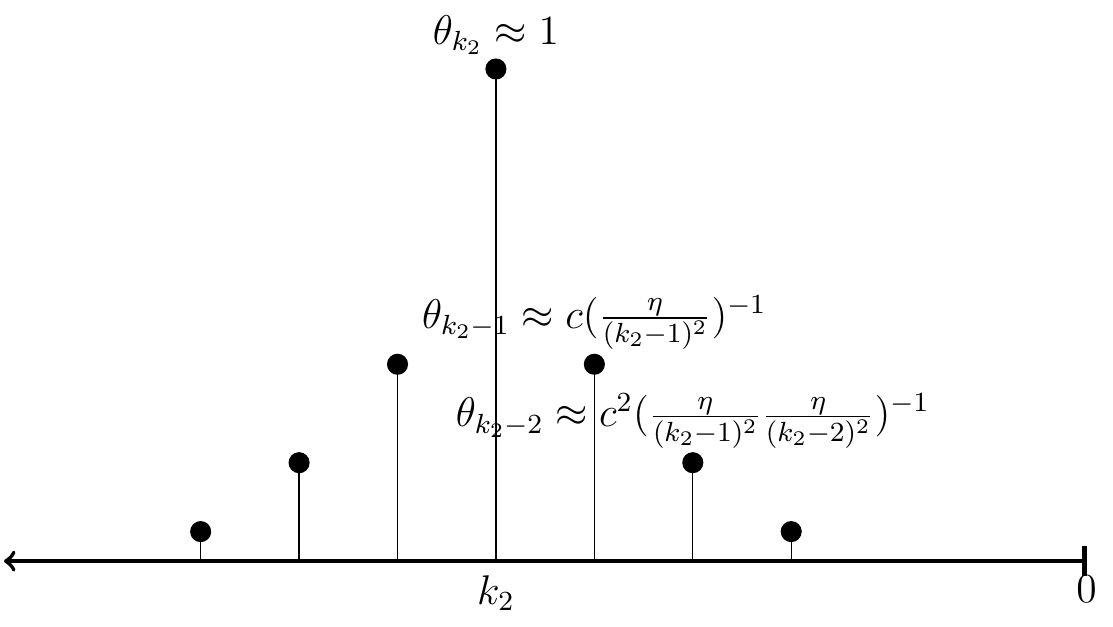}
  \caption{Distribution of $\theta_{l}$ at time $t_{k_1}$. We observe a peak at
    the frequency $k_2$ and rapid decay away from this frequency. }
  \label{fig:peak}
\end{figure}

\begin{proof}[Proof of Lemma \ref{lemma:smalltime}]
Given our choice of initial data we observe that $\theta$ and $G$ satisfy the
integral equations \eqref{eq:integralequations}:
\begin{align*}
  \begin{split}
   \theta_l(T)- \delta_{l k_2} &= \int_{0}^T c_{l}^{+} \theta_{l+1} + c_{l}^{-} \theta_{l-1} + d_{l}^{+} G_{l+1} + d_{l}^{-} G_{l-1} dt, \\
   G_{l}(T) &= \int_{0}^{T}\exp(-\nu\int_{t}^{T}l^2+(\eta-l\tau)^2 d\tau) 2 \frac{(\frac{\eta}{l}-t)}{(1+(\frac{\eta}{l}-t)^2)^2} \theta_{l} dt \\
  &\quad + \int_{0}^{T}\exp(-\nu\int_{t}^{T}l^2+(\eta-l\tau)^2 d\tau) f il \frac{\nu}{g} (d_l^{+} G_{l+1} + d_l^{-} G_{l-1}) dt \\
  &\quad + \int_{0}^{T}\exp(-\nu\int_{t}^{T}l^2+(\eta-l\tau)^2 d\tau) f il \frac{\nu}{g} (c_l^{+} \theta_{l+1} + c_l^{-} \theta_{l-1}) dt \\
  & \quad + \int_{0}^{T}\exp(-\nu\int_{t}^{T}l^2+(\eta-l\tau)^2 d\tau) \frac{1}{1+(\frac{\eta}{l}-t)^2}\left( c_{l}^{+} \theta_{l+1} + c_{l}^{-} \theta_{l-1}\right) dt \\
  & \quad + \int_{0}^{T}\exp(-\nu\int_{t}^{T}l^2+(\eta-l\tau)^2 d\tau) \frac{1}{1+(\frac{\eta}{l}-t)^2} \left(d_{l}^{+} G_{l+1} + d_{l}^{-} G_{l-1} \right) dt.
  \end{split}
\end{align*}
We then observe that
\begin{align*}
  \int_{0}^T c_{l}^{\pm} &\leq 0.01, \\
  \int_{0}^T d_{l}^{\pm} &\leq 0.01,
\end{align*}
since $\frac{c\eta}{k^3}\leq 4^{-3}$ for $k\geq k_1$ and the frequencies are not
yet resonant for $k\leq k_1$.
Similarly,
\begin{align*}
  \int_{0}^{T}\exp(-\nu\int_{t}^{T}l^2+(\eta-l\tau)^2 d\tau) f il \frac{\nu}{g} d_l^{\pm}&\leq 0.01, \\
  \int_{0}^{T}\exp(-\nu\int_{t}^{T}l^2+(\eta-l\tau)^2 d\tau) f il \frac{\nu}{g} c_l^{\pm}&\leq 0.01,\\
  \int_{0}^{T}\exp(-\nu\int_{t}^{T}l^2+(\eta-l\tau)^2 d\tau) \frac{1}{1+(\frac{\eta}{l}-t)^2} c_{l}^{\pm} & \leq 0.01, \\
  \int_{0}^{T}\exp(-\nu\int_{t}^{T}l^2+(\eta-l\tau)^2 d\tau) \frac{1}{1+(\frac{\eta}{l}-t)^2} d_{l}^{\pm} & \leq 0.01.
\end{align*}
The only possibly large contribution is hence given by
\begin{align*}
  \int_{0}^{T}\exp(-\nu\int_{t}^{T}l^2+(\eta-l\tau)^2 d\tau) 2 |\frac{(\frac{\eta}{l}-t)}{(1+(\frac{\eta}{l}-t)^2)^2}|  dt &\leq 2.
\end{align*}
Similarly to the results of Section \ref{sec:small} we thus consider the
equations as equations for $\theta$ and $\frac{G}{10}$ instead, so that
all coefficient functions are bounded by $0.1$.

We then define
\begin{align*}
  \hat{\theta}_l:=\theta_{l}- \delta_{l k_2}
\end{align*}
and view these equations as a fixed point iteration for
\begin{align*}
  \begin{pmatrix}
    \hat{\theta} \\ \frac{1}{10} G
  \end{pmatrix}
  = B[k_2] + L \begin{pmatrix}
    \hat{\theta} \\ \frac{1}{10} G
  \end{pmatrix},
\end{align*}
on the space
\begin{align*}
  L^{\infty} \ell^\infty.
\end{align*}
By the above choice $L$ is a contraction with norm less than $1/2$ and hence we
can control
\begin{align*}
  \|\theta(t)\|_{\ell^\infty} + \frac{1}{10} \|G(t)\|_{\ell^\infty} \leq 2 \|B[k_2]\|_{L^\infty \ell^\infty}.
\end{align*}
We note that here the components of $B[k_2]$ are given by
\begin{align*}
  \int_0^T \frac{c\eta}{k_2^3} \frac{1}{(1+(\frac{\eta}{k_2}-t)^2)^2} dt \leq \frac{c}{k_2} (\frac{\eta}{k_2^2})^{-2}
\end{align*}
and
\begin{align*}
  \int_{0}^{T}\exp(-\nu\int_{t}^{T}k_2^2+(\eta-k_2\tau)^2 d\tau) 2 \frac{(\frac{\eta}{k_2}-t)}{(1+(\frac{\eta}{k_2}-t)^2)^2} &\leq \min((\frac{\eta}{k_2})^{-2}, \frac{1}{\nu k_2^2} (\frac{\eta}{k_2})^{-3}), \\
  \int_{0}^{T}\frac{\exp(-\nu\int_{t}^{T}(k_2\pm 1)^2+(\eta-(k_2\pm 1)\tau)^2 d\tau)}{(1+(\frac{\eta}{k}-t)^2)^2} f(t) \frac{c\eta}{k_2^3} &\leq c(\frac{\eta}{k_2})^{-2}, \\
  \int_{0}^{T}\frac{\exp(-\nu\int_{t}^{T}(k_2\pm 1)^2+(\eta-(k_2\pm 1)\tau)^2 d\tau)}{1+(\frac{\eta}{k}-t)^2} f(t) \frac{c\eta}{k_2^3} &\leq c(\frac{\eta}{k_2})^{-2},\\
  \int_{0}^{T}\frac{\exp(-\nu\int_{t}^{T}(k_2\pm 1)^2+(\eta-(k_2\pm 1)\tau)^2 d\tau)}{(1+(\frac{\eta}{k_2\pm 1}-t)^2) (1+(\frac{\eta}{k}-t)^2)^2}  \frac{c\eta}{k_2^3} &\leq c(\frac{\eta}{k_2})^{-2}, \\
  \int_{0}^{T}\frac{\exp(-\nu\int_{t}^{T}(k_2\pm 1)^2+(\eta-(k_2\pm 1)\tau)^2 d\tau)}{(1+(\frac{\eta}{k_2\pm 1}-t)^2)(1+(\frac{\eta}{k}-t)^2)}  \frac{c\eta}{k_2^3}  &\leq c(\frac{\eta}{k_2})^{-2},
\end{align*}
respectively.
Therefore, it holds that
\begin{align*}
   \|\theta(t)\|_{\ell^\infty} + \frac{1}{10} \|G(t)\|_{\ell^\infty} \leq C (\frac{\eta}{k_2})^{-2}
\end{align*}
for a universal constant $C$.

We next use these rough upper bounds to establish the claimed improved bounds and decay.
  For this purpose we observe that
  \begin{align*}
    \int_{0}^{t_{k_1}} c_{l}^\pm \leq
    \begin{cases}
      0.01 & \text{ if }l\pm 1 > k_1, \\
      c (\frac{\eta}{(l\pm 1)^2})^{-2} & \text{ if } (l\pm 1)\leq k_1,
    \end{cases}
  \end{align*}
  since the latter frequencies have not yet been resonant.
  In particular, it follows that
  \begin{align*}
    |\theta_{k_2}(t) -1| \leq  c (\frac{\eta}{l^2})^{-2} (\|\hat{\theta}\|_{L^\infty \ell^\infty}+ \|G\|_{L^\infty \ell^\infty}),
  \end{align*}
  since $k_2-1, k_2+1$ are non-resonant.

  Given this size of $\theta_{k_2}$ the claimed decay in $|l-k_2|$ then follows by repeated
  insertion of the above estimates into the integral equation (using that
  $\hat{\theta}(0)=0$, $G(0)=0$).
  More precisely, we observe that
  \begin{align*}
    |G_{k_2}(T)| &\leq \min((\frac{\eta}{k_2})^{-2}, \frac{1}{\nu k_2^2} (\frac{\eta}{k_2})^{-3}) \|\theta_{k_2}\|_{L^\infty \ell^\infty}\\
    & \quad + c(\frac{\eta}{k_2})^{-2} (\|\theta_{k_2+1}\|_{L^\infty \ell^\infty}+ \|\theta_{k_2-1}\|_{L^\infty \ell^\infty}+\|G_{k_2+1}\|_{L^\infty \ell^\infty}+\|G_{k_2-1}\|_{L^\infty \ell^\infty}).
  \end{align*}
  Thus, inserting our bounds by $1$ and $\|B[k_2]\|_{L^\infty \ell^\infty}$ we
  observe the desired improvement for $|G_{k_2}(T)|$.

  For bounds on modes at frequencies further away from $k_2$, we require
  multiple iterations of this argument.
  For instance, we observe that after the first insertion 
  \begin{align*}
    \|\theta_{k_2-10}\|_{L^\infty \ell^{\infty}} &\leq c (\frac{\eta}{l^2})^{-2} (\|\theta_{k_2-9}\|_{L^\infty \ell^{\infty}}+ \|\theta_{k_2-11}\|_{L^\infty \ell^{\infty}})\\
    &\quad +c (\|G_{k_2-9}\|_{L^\infty \ell^{\infty}}+ \|G_{k_2-11}\|_{L^\infty \ell^{\infty}})
  \end{align*}
  does not enjoy a better bound than $\theta_{k_2-2}$.
  However, these improved estimates then hold for all frequencies $\leq
  k_2-2$. Thus, inserting the improved estimates once more we establish the desired bound
  for $k_2-3$ and the same (then suboptimal) bound for all modes smaller
  than $k_2-3$.
  Thus, repeating the argument $N$ times we obtain the desired bounds for modes
  with $|k-k_2|\leq N$ and thus the full result by letting $N\rightarrow \infty$.

\end{proof}

\subsection{The Intermediate Time Regime and Upper Bounds}
\label{sec:upper}
We next study the intermediate time regime
\begin{align*}
  (t_{k_1}, t_{k_2}).
\end{align*}
Since in this regime $\frac{c\eta\pi}{k^3}$ is not necessarily small anymore a
fixed point iteration is not possible anymore. Moreover, resonances can result
in growth of certain modes.
However, since by Lemma \ref{lemma:smalltime} we know that at least at time
$t_{k_1}$ the corresponding modes possibly becoming resonant are small we can
control the growth on each interval $I_k$ in this time regime by induction.

\begin{lemma}
  \label{lemma:inductionupper}
  Let $c, \eta, k_1, k_2$ be as in Proposition \ref{prop:timeregimes} and for $k_2\leq k\leq k_1$ define the constant $C_k$ by
  \begin{align*}
    C_{k_1}&= 1, \\
    C_{k-1} &= (1+(\frac{\eta}{k^2})^{-1})C_k.
  \end{align*}
  Then for all such $k$ it holds that 
  \begin{align}
    \label{eq:inductionstep2}
    \begin{split}
    |\theta_{k_2}(t_{k})|&\leq C_{k}, \\
    |\theta_{l}(t_k)|&\leq C_k \prod_{j=l}^{k_2} (c (\frac{\eta}{j^2})^{-1}) \text{ for } j\leq k, \\
    |\theta_{l}(t_k)|&\leq C_k \prod_{j=k-1}^{k_2} (c (\frac{\eta}{j^2})^{-1}) \text{ for } j\geq k, \\
      |G_{l}(t_k)|&\leq 10 C_k c (\frac{\eta}{k_1^2})^{-2} \prod_{j=l}^{k_2} c(\frac{\eta}{j^2})^{-1}, \text{ if } l\leq k, l\neq k_2,\\
      |G_{l}(t_k)|&\leq 10C_k \cdot 2^{-|l-k_1|}  c (\frac{\eta}{k_1^2})^{-2}\prod_{j=k-1}^{k_2} c(\frac{\eta}{j^2})^{-1}, \text{ if } l \geq k, \\
      |G_{k_2}(t_k)| & \leq C_k c^2 (\frac{\eta}{k_1^2})^{-2}.
   \end{split}
  \end{align}
\end{lemma}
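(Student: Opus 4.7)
\medskip

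\noindent\textbf{Proof plan.} The strategy is downward induction on $k$, from $k=k_1$ to $k=k_2$, where each inductive step across $I_k=(t_k,t_{k-1})$ is executed by invoking Theorem \ref{theorem:mechanism}. The base case $k=k_1$ is provided directly by Lemma \ref{lemma:smalltime}: since $C_{k_1}=1$, the bounds proved there are in fact stronger than what \eqref{eq:inductionstep2} asks for.

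For the inductive step, assume \eqref{eq:inductionstep2} at time $t_k$. On $I_k$ the resonant frequency is $k$, which lies strictly above the peak $k_2$. Applying Theorem \ref{theorem:mechanism} to the data $(\theta(t_k),G(t_k))$ I extract three consequences. First, the peak mode $\theta_{k_2}$ is non-resonant on $I_k$ and the mode-wise bound gives $|\theta_{k_2}(t_{k-1})-\theta_{k_2}(t_k)|\lesssim c(\eta/k^2)^{-1}(\|\theta(t_k)\|_X+\|G(t_k)\|_X)$; combined with $|\theta_{k_2}(t_k)|\le C_k$ this multiplies the bound by the factor $(1+(\eta/k^2)^{-1})$ that defines $C_{k-1}$. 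Second, the neighbours $\theta_{k\pm 1}$ absorb the resonant kick $c(\eta/k^3)\pi\,\theta_k(t_k)$; inserting the inductive bound $|\theta_k(t_k)|\le C_k\prod_{j=k-1}^{k_2}c(\eta/j^2)^{-1}$ together with the elementary inequality $c(\eta/k^3)\cdot c(\eta/(k-1)^2)^{-1}\lesssim c^2/k\ll 1$ shows this kick is strictly smaller than $C_{k-1}\prod_{j=k-2}^{k_2}c(\eta/j^2)^{-1}$, which is exactly the claim at $t_{k-1}$ after the product-index shifts from $\prod_{j=k-1}^{k_2}$ to $\prod_{j=k-2}^{k_2}$. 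Third, every remaining mode $l\notin\{k-1,k,k+1\}$ is non-resonant and its change is again bounded by $c(\eta/k^2)^{-1}$ times the $X$-norm of the data, which is swallowed by the same factor $(1+(\eta/k^2)^{-1})$. The bounds on $G(t_{k-1})$ follow in parallel from the resonant and non-resonant estimates for $G$ built into the proof of Theorem \ref{theorem:mechanism}, namely items \ref{item:B4}, \ref{item:B5}, (C2), (D2).

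The main technical obstacle is not any single estimate but the bookkeeping required to bridge the two notions of size in play: Theorem \ref{theorem:mechanism} is phrased in the $X$-norm, whereas \eqref{eq:inductionstep2} tracks mode-by-mode weighted bounds. To reconcile them I use (as in the proof of Theorem \ref{theorem:mechanism}) that a single echo interaction produces a kernel $K(k_0,l)$ decaying geometrically in $|k_0-l|$; convolving this kernel against the inductively controlled data preserves the product-weight structure, provided the geometric decay rate dominates the resonant amplification $c\eta/k^3$, which is precisely the inequality verified in the previous paragraph. Finally, one must verify that the cumulative constant $C_{k_2}=\prod_{k_2<k\le k_1}(1+(\eta/k^2)^{-1})$ stays bounded: since $\sum_{k_2<k\le k_1}k^2/\eta\lesssim k_1^3/\eta\lesssim c\ll 1$, one has $C_{k_2}\le e^{2}$, consistent with the explicit constant appearing at time $t_{k_2}$ in Proposition \ref{prop:timeregimes}. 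This closes the induction and establishes \eqref{eq:inductionstep2} throughout the stated range of $k$.
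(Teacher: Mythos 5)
Your skeleton (downward induction from $k_1$ to $k_2$, base case from Lemma \ref{lemma:smalltime}, per-interval bookkeeping of a resonant kick of size $c\eta\pi/k^3$ on $\theta_{k\pm1}$ and $(1+(\eta/k^2)^{-1})$-losses on everything else) matches the paper, and your $\theta$-bookkeeping, including the product-index shift and the constant check $C_{k_2}\le e^{O(1)}$, is essentially right. The genuine gap is in how you propose to get the $G$-bounds of \eqref{eq:inductionstep2} at time $t_{k-1}$, namely ``in parallel from \ref{item:B4}, \ref{item:B5}, \ref{item:C2}, \ref{item:D2}''. Those estimates (and Theorem \ref{theorem:mechanism} itself) are deliberately uniform in $\nu$: in their proofs the dissipative exponential is simply bounded by $1$, and the resonant forcing of $G_k$ by $\theta_k$ over $I_k$ is only controlled through $\int|2(\tfrac{\eta}{k}-t)|(1+(\tfrac{\eta}{k}-t)^2)^{-2}dt\le 2$, with an additional error of size $2/k$ in \ref{item:B4}. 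Consequently the best they can give for the resonant mode at the end of the interval is $|G_k(t_{k-1})|\lesssim (2+\tfrac{2}{k})|\theta_k(t_k)|$, i.e.\ a quantity comparable to the $\theta_k$-bound. But the bound claimed in \eqref{eq:inductionstep2} for modes $l\ge k-1$ at $t_{k-1}$ carries the prefactor $10\,C_{k-1}2^{-|l-k_1|}c(\eta/k_1^2)^{-2}$ together with two additional product factors $c(\eta/j^2)^{-1}$; for $k$ well below $k_1$ the factor $2^{-|k-k_1|}$ alone makes the target astronomically smaller than $2|\theta_k(t_k)|$, so no $\nu$-uniform estimate of the type you cite can reach it. In other words, during $I_k$ the mode $G_k$ genuinely rises to size comparable with $\theta_k$ (this is the very resonance that drives the echo), and the stated bound at $t_{k-1}$ is only true because viscosity crushes it again before the end of the interval.

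This is exactly where the paper's proof departs from a direct application of the Section \ref{sec:G} machinery: it reruns a weighted $\ell^\infty$ bootstrap with a \emph{relaxed} in-interval bound on $G_k$ (comparable to the $\theta$-size), and then recovers the tiny claimed bound at the endpoint by writing $G_k(t_{k-1})$ via Duhamel over $(t_{k-1}-3,t_{k-1})$, where mode $k$ is no longer resonant, and using $(\eta-k\tau)^2\ge \tfrac12(\eta/k)^2$ together with the standing assumption $\nu\eta\gg1$ to gain the factor $\exp(-3\eta/k^2)$, which dominates every weight in \eqref{eq:inductionstep2}. Your kernel-convolution device does preserve the product-weight structure for $\theta$ (and for the non-resonant $G_l$, where the forcing carries $(\eta/k^2)^{-2}$), but it cannot produce this endpoint collapse of $G_k$, since the phenomenon is not uniform on $I_k$ and is invisible to the $\nu$-independent estimates you invoke. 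To close the induction as stated you must add this dissipation step (or an equivalent quantitative use of $\nu k^2$, $\nu\eta$ being large), which your proposal never mentions; as written, the inductive hypothesis on $G$ for $l\ge k$ cannot be propagated.
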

We note that the upper bounds \eqref{eq:inductionstep2} here have a ``dent'',
where the bounds for $\theta_{k+2}$ and $\theta_{k}$ are the same and much larger than the
one for $\theta_{k+1}$ (see Figure \ref{fig:dent}).
The reason for this is that the resonance during the time interval $I_{k+1}$ may
cause the modes $k+1\pm 1= k+2, k$ to grow while $\theta_{k}$ remains relatively
unchanged. Thus the upper bounds for $k+2,k$ are much larger than for $k$,
resulting in the ``dent''.
On the time interval $I_k$ the mode $\theta_{k}$ then is resonant and may cause
the modes $\theta_{k+1}$ and $\theta_{k-1}$ to grow by a large factor (shown in
red in Figure \ref{fig:dent}), thus resulting in a new ``dent''.

\begin{figure}[htbp]
  \centering
  \includegraphics[width=0.5\linewidth]{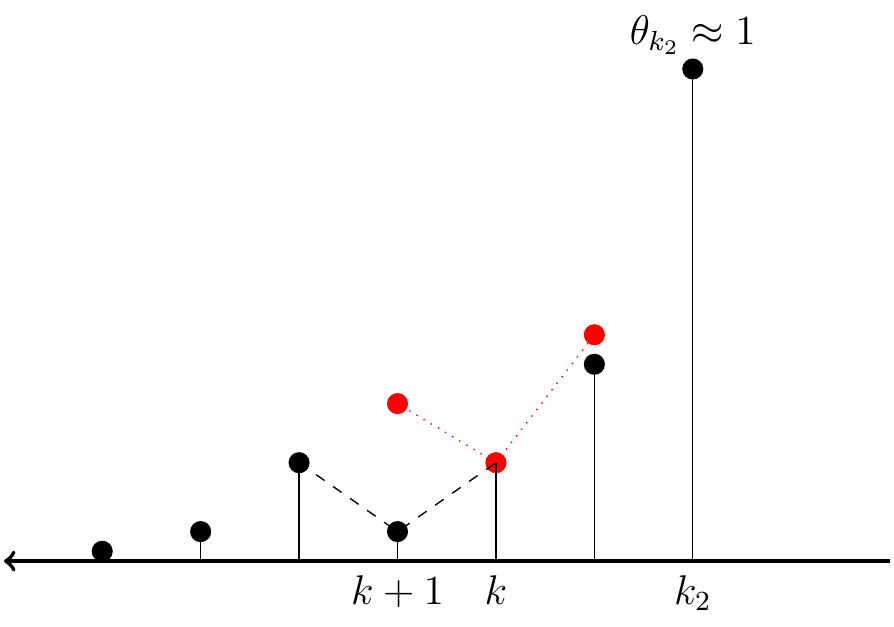}
  \caption{The resonance mechanism causes neighboring modes to grow.
    In this figure we show the upper bounds of Lemma \ref{lemma:inductionupper}
    at the time $t_{k}$ as black dots.
    In particular, we observe a ``dent'' at the frequency $k+1$.
    During the time interval $I_k$ then the mode $\theta_k$ becomes resonant
    leading to the change of upper bounds colored in red. The ``dent'' moves.}
  \label{fig:dent}
\end{figure}

We further remark that \eqref{eq:inductionstep2} holds for $k=k_1$ by Lemma
\ref{lemma:smalltime}.
Moreover, since $1\leq k \leq k_1$ it holds that
\begin{align*}
  \frac{\eta}{k^2}\geq \frac{\eta}{k_1^2}\geq \frac{1}{10} c^{-1} k_1 \geq 100 k_1.
\end{align*}
In particular, since there are less than $k_1$ many such $k$ we observe that
  \begin{align*}
    C_{k} \leq (1+\frac{1}{100} \frac{1}{k_1})^{k_1}\leq e^{\frac{1}{100}}
  \end{align*}
and thus $C_k$ remains uniformly bounded and comparable to $1$.
The estimates \eqref{eq:inductionstep2} hence formalize the statement that modes
other than $k-1,k+1$ only change slightly, while the modes $k-1, k+1$ might
potentially grow. However, since the upper bound on the $\theta_{k}$ was much
smaller than the one on the mode $\theta_{k-1}$ it suffices to that all modes
$l\geq k-1$ then at time $t_{k-1}$ satisfy the same upper bound as the mode $k-1$.

Additionally, using the integral equation we deduce that 
  \begin{align*}
    |\theta_{k_2}(t_{k_2})- \theta_{k_2}(t_{k_1})| \leq 4 e^{\frac{1}{100}}c (\frac{\eta}{k_2^2})^{-1},
  \end{align*}
where we used the bound on $\int c_{k_2}^{\pm}$ by the definition of $k_2$ and
the above induction bound.
Thus, the claimed control at time $t_{k_2}$ stated in Proposition
\ref{prop:timeregimes} indeed follows from this lemma.

\begin{proof}[Proof of Lemma \ref{lemma:inductionupper}]
  As remarked Lemma \ref{lemma:smalltime} ensures that \eqref{eq:inductionstep2}
  holds for $k=k_1$ and we then aim to proceed by induction, where we again use
  a bootstrap argument.
  
  More precisely, let $k$ be given and suppose that \eqref{eq:inductionstep2}
  holds for this $k$.
  Since $C_{k+1}> C_{k}$ and the products also only become larger when replacing
  $k$ by $k-1$, it follows that there exists a time interval $(t_{k},
  T_{\star})$ such that \eqref{eq:inductionstep2} with $k-1$ holds on this
  interval with slight modification: 
  \begin{align*}
        \begin{split}
    |\theta_{k_2}|&\leq C_{k-}, \\
    |\theta_{l}|&\leq C_{k-1} \prod_{j=l}^{k_2} (c (\frac{\eta}{j^2})^{-1}) \text{ for } j\leq k-1, \\
    |\theta_{l}|&\leq C_{k-1} \prod_{j=k-2}^{k_2} (c (\frac{\eta}{j^2})^{-1}) \text{ for } j\geq k-1, \\
      |G_{l}|&\leq 10 C_{k-1} c (\frac{\eta}{k_1^2})^{-2} \prod_{j=l}^{k_2} c(\frac{\eta}{j^2})^{-1}, \text{ if } l< k-1, l\neq k_2,\\
      |G_{l}|&\leq 10C_{k-1} \cdot 2^{-|l-k_1|}  c (\frac{\eta}{k_1^2})^{-2}\prod_{j=k-2}^{k_2} c(\frac{\eta}{j^2})^{-1}, \text{ if } l > k-1, l\neq k \\
      |G_{k}| & \leq 10 C_{k-1}\prod_{j=k-2}^{k_2} (c (\frac{\eta}{j^2})^{-1}),\\
      |G_{k_2}| & \leq C_{k-1} c^2 (\frac{\eta}{k_1^2})^{-2}.
   \end{split}
  \end{align*}
  for all $t \in (t_{k}, T_{\star})$.
  We emphasize here that $G_{k}$ is only required to satisfy an upper bound
  comparable to the one of $\theta_{k}$.
  The reason for this is that the contribution by 
  \begin{align*}
    \int_{{t_{k}}}^{T}\exp(-\nu\int_{t}^{T}k^2+(\eta-k\tau)^2 d\tau) 2 \frac{(\frac{\eta}{k}-t)}{(1+(\frac{\eta}{k}-t)^2)^2}
  \end{align*}
  is potentially large and thus for times $T\approx \frac{\eta}{k}$ we do not
  necessarily expect the unmodified estimate of \eqref{eq:inductionstep2} to
  hold.
  However, we claim that the above modified bootstrap assumptions allow us to
  recover the unmodified estimates at the final time.
  Indeed, we observe that for $t_{k-1}-3 \leq \tau \leq t_{k-1}$ it holds
  that
  \begin{align*}
    (\eta-k\tau)^2 \geq 0.5 (\frac{\eta}{k})^2 = 0.5 \eta \frac{\eta}{k^2}.
  \end{align*}
  Hence, using the fact that $0.5 \eta \nu\gg 1$, it follows that for $t_{k-1}-3
  \leq t \leq T \leq t_{k-1}$
  \begin{align*}
    \exp(-\nu\int_{t}^{T}k^2+(\eta-k\tau)^2 d\tau) \leq \exp(-\frac{\eta}{k^2} (T-t)).
  \end{align*}
  We may then use the integral equations to express
  \begin{align*}
    G_{k}(t_{k-1}) &= \exp(-\nu\int_{t_{k-1}-3}^{t_{k_1}}k^2+(\eta-k\tau)^2 d\tau) G_{k}(t_{k-1}-3) \\
    & \quad + \int_{t_{k-1}-3}^{t_{k-1}}  \exp(-\nu\int_{t}^{T}k^2+(\eta-k\tau)^2 d\tau) \text{rhs}(t)dt,
  \end{align*}
  where we abbreviated the terms of the evolution equation as a
  right-hand-side$(t)$.
  We then observe that
  \begin{align*}
    \exp(-\nu\int_{t_{k-1}-3}^{t_{k_1}}k^2+(\eta-k\tau)^2 d\tau) \leq \exp(-3\frac{\eta}{k^2})
  \end{align*}
  yields exponential decay and that all coefficients included in the
  right-hand-side are non-resonant on the interval $(t_{k-1}-3, t_{k-1})$. Thus
  $G_{k}(t_{k-1})$ satisfies the desired improved bounds.

  It remains to be shown that the maximal time $T_{\star}\leq t_{k-1}$ for which
  the (modified) bootstrap assumptions are satisfied is given by $T_{\star}=t_{k-1}$.
  Indeed, suppose not then
  \begin{align*}
    \theta_{k_2}(t) - \theta_{k_2}(t_k) &= \int_{t_k}^t c_{k_2}^{+} \theta_{k_2+1} + c_{k_2}^{-1} \theta_{k_2}^{-1} \\
                                        &\leq c (\frac{\eta}{k_1^2})^{-2} C_{k-1}  c (\frac{\eta}{k_1^2})^{-1}\\
                                        & \leq c^2 (\frac{\eta}{k_1^2})^{-2} C_{k-1}
  \end{align*}
  Thus, using the induction assumption
  \begin{align*}
    |\theta_{k_2}(t)| \leq C_{k} + c^2 (\frac{\eta}{k_1^2})^{-2} C_{k-1} < C_{k-1}.
  \end{align*}

  Similarly, if $l \not \in \{k-1, k+1\}$ the non-resonant integrals are bounded
  by $c (\frac{\eta}{l^2})^{-2}$, while the larger of the neighbors is larger by
  a factor at most $c^{-1} \frac{\eta}{l^2}$ compared to the bound on the mode
  $l$.
  Thus, in total we lose at most a factor $(1+2 (\frac{\eta}{k^2})^{-1})$, as claimed.
  The estimates for $G$ in the non-resonant cases are analogous.\\
  
  It remains to discuss the effect of resonant modes.
  Here we observe that
  \begin{align*}
    \int_{t_k}^{t_{k-1}} \frac{c\eta}{k_3} \frac{1}{(1+(\frac{\eta}{k}-t)^2)^2} dt \leq \frac{c\eta}{k^3} \pi
  \end{align*}
  might cause the modes $\theta_{k-1}$ and $\theta_{k+1}$ to grow by this factor times the bound
  on the mode $\theta_k$.
  However, the bound on $\theta_{k-1}$ formulated in the bootstrap is already larger than
  $c^{-1}(\frac{\eta}{k^2})$ times this bound. Hence, this growth is consistent
  with \eqref{eq:inductionstep2}. Similarly the growth of $\theta_{k+1}$ is
  controlled since the ``dent'' moved in the induction step and now
  $\theta_{k+1}$ needs only to satisfy the same upper bound as $\theta_{k-1}$.

  We also observe that $G_k$ may grow due to 
  \begin{align*}
    \int_{t_k}^T \exp(-\nu\int_{t}^Tk^2+(\eta-kt)^2) \frac{2(\frac{\eta}{k}-t)}{(1+(\frac{\eta}{k}-t)^2)^2} \theta_{k} dt.
  \end{align*}
  Since $k\geq k_2$ this contribution can be estimated from above by
  $\frac{1}{\nu k_2^3}\ll 1$ in the present setting. However, for later
  reference we remark that for our upper bounds it suffices to note that
  \begin{align*}
   \int_{t_k}^T |\frac{2(\frac{\eta}{k}-t)}{(1+(\frac{\eta}{k}-t)^2)^2}| \leq 2. 
  \end{align*}
\end{proof}

\subsection{Norm Inflation in the Resonant Regime}
\label{sec:norminf}

The core of our norm inflation mechanism is given by the resonant growth in the
time interval
\begin{align*}
  (t_{k_2}, t_0),
\end{align*}
which is formulate in the following proposition.

\begin{prop}
  \label{prop:inductionblow}
  Let $c,\eta$ be as in Theorem \ref{theorem:blow-up} and let $k_0\in N$ be such
  that (by rounding down)
  \begin{align*}
    k_0\approx \sqrt[3]{c\eta\pi}.
  \end{align*}
  Let further $1\leq k \leq \frac{k_0}{4}$ be such that $\nu k^2\geq 4$.
  Then if at time $t_k$ it holds that
  \begin{align}
    \label{eq:inductiongrowth}
    \begin{split}
    |\theta_k(t_k)|\geq 0.5 \max(\|\theta(t_k)\|_{\ell^\infty}, \|G(t_k)\|_{\ell^\infty}), \\
    |\theta_k(t_k)| \geq 4 |G_k(t_k)|,
  \end{split}
  \end{align}
  then at time $t_{k-1}$ it holds that
  \begin{align}
    \label{eq:lowerb}
    |\theta_{k\pm 1}(t_{k-1})| \geq \frac{c\eta \pi}{3k^3} |\theta_k(t_k)|
  \end{align}
  and \eqref{eq:inductiongrowth} holds with $k$ replaced by $k-1$.

  In particular it holds that
  \begin{align*}
    |\theta_{k_3}(t_{k_3})| \geq e^{k_0} |\theta_{k_2}(t_{k_2})|,\\
    |\theta_{k_3-2}(t_{k_3})| \geq e^{k_0} |\theta_{k_2}(t_{k_2})|,\\
  \end{align*}
\end{prop}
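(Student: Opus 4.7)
The plan is to establish the single-step lower bound \eqref{eq:lowerb}, then propagate the inductive hypotheses \eqref{eq:inductiongrowth} to step $k-1$, and finally iterate from $k_2$ down to $k_3$ and multiply the growth factors. By linearity of the system \eqref{eq:integralequations}, we decompose $(\theta(t_k), G(t_k))$ mode by mode and apply Propositions~\ref{prop:res} and~\ref{prop:nonres} to each piece.

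For the lower bound, the mode-wise estimate \ref{item:B2} of Proposition~\ref{prop:res}, applied by linearity and rescaling to the component of the initial data concentrated at frequency $k$, yields
\begin{equation*}
\theta_{k\pm 1}(t_{k-1}) = \theta_k(t_k)\int_{t_k}^{t_{k-1}} c_{k\pm 1}^{\mp}\,dt + \int_{t_k}^{t_{k-1}} d_{k\pm 1}^{\mp} \hat G_k\,dt + E,
\end{equation*}
with bootstrap error $|E|\le \tfrac{0.5}{k} c\tfrac{\eta}{k^3}|\theta_k(t_k)|$ and $\hat G_k$ the part of $G_k$ driven by $\theta_k(t_k)$ and $G_k(t_k)$. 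By Lemma~\ref{lemma:coeffestimates}, the main integral approaches $c\eta\pi/(2k^3)$ when $\eta/k^2\gg 1$, which holds for $k\le k_0/4$ under the hypothesis $\sqrt[3]{c\eta}\gg \nu^{-1/2}$. The additional contributions from $\theta_{k\pm 1}(t_k)$ and from the non-resonant modes $k_0\neq k$, controlled via the weighted $\ell^{\infty}$ bounds of Proposition~\ref{prop:nonres}, together with the $\hat G_k$ term (bounded using the hypothesis $|\theta_k(t_k)|\ge 4|G_k(t_k)|$), are all dominated by $|\theta_k(t_k)|$ thanks to \eqref{eq:inductiongrowth}. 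Since the weights $\gamma_{k_0}$ decay geometrically with ratio $c(\eta/k^2)^{-2}\ll 1$, summing these errors yields at most $c\eta\pi/(6k^3)|\theta_k(t_k)|$, so that subtracting leaves the claimed $|\theta_{k\pm 1}(t_{k-1})|\ge c\eta\pi/(3k^3)|\theta_k(t_k)|$.

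To propagate the inductive hypotheses we pair this with the matching upper bound from Theorem~\ref{theorem:mechanism}, giving $|\theta_{k\pm 1}(t_{k-1})|\le 2c\eta\pi/k^3\cdot|\theta_k(t_k)|$. Hence $\theta_{k-1}$ and $\theta_{k+1}$ are comparable to within a factor of $6$, so $|\theta_{k-1}(t_{k-1})|\ge \tfrac12\max(|\theta_{k-1}|,|\theta_{k+1}|)$, and all modes $l\notin\{k-1,k+1\}$ at time $t_{k-1}$ are much smaller by \ref{item:B3} and \ref{item:C1}. For the $G$-component, \ref{item:B5} together with the analogous non-resonant estimates give $|G_{l}(t_{k-1})|\le C k^{-1}|\theta_k(t_k)|$ uniformly in $l$, which is at most $\tfrac14|\theta_{k-1}(t_{k-1})|$ precisely because $c\eta/k^3\gg 1$ for $k\le k_2$. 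The condition $\nu k^2\ge 4$ is used to ensure the exponential decay in the evolution of $G_k$ dominates its forcing on the interval of length $\sim \eta/k^2$.

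Iterating this one-step argument from $k=k_2$ down to $k=k_3+1$ produces
\begin{equation*}
|\theta_{k_3}(t_{k_3})|\ge \prod_{k=k_3+1}^{k_2}\frac{c\eta\pi}{3k^3}\cdot|\theta_{k_2}(t_{k_2})|,
\end{equation*}
and similarly for the companion mode $\theta_{k_3\pm 2}$ excited together with $\theta_{k_3}$ at the final resonance. Using $c\eta\approx k_0^3/\pi$ with $k_2=k_0/10$, $k_3=k_0/1000$, and Stirling's formula, this product is bounded below by $\exp(k_0)$. The main obstacle will be the simultaneous maintenance of sharp enough upper bounds on $\theta_{k+1}$ and on all $G_l$ alongside the lower bound on $\theta_{k-1}$, so that \eqref{eq:inductiongrowth} continues to hold at the next step; this is precisely what the geometric decay of the non-resonant weights $\gamma_{k_0}$ in Proposition~\ref{prop:nonres}, combined with the smallness factor $c(\eta/k^2)^{-2}\ll 1$, guarantees.
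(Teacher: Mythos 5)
Your overall route---decomposing the data by linearity and re-using the mode-wise bounds of Propositions~\ref{prop:res} and~\ref{prop:nonres} instead of running a fresh bootstrap on the full solution, as the paper does with its assumptions \ref{item:thetak}--\ref{item:Gl}---is legitimate; the paper itself notes that the single-mode decomposition is merely unnecessary for $\ell^\infty$ bounds, so this difference is cosmetic. The iteration down to $k_3$ and the Stirling computation at the end also match the paper.

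However, there is a genuine gap in your derivation of the single-step lower bound \eqref{eq:lowerb}, namely in the treatment of $\int_{t_k}^{t_{k-1}} d_{k\pm1}^{\mp}\hat G_k\,dt$. You claim this term is ``bounded using the hypothesis $|\theta_k(t_k)|\ge 4|G_k(t_k)|$'', but that hypothesis only controls the part of $\hat G_k$ inherited from the initial datum $G_k(t_k)$. The other part of $\hat G_k$---the one generated during $I_k$ by the resonant forcing $2\tfrac{(\eta/k-t)}{(1+(\eta/k-t)^2)^2}\theta_k$ (cf.\ \ref{item:B4})---is a priori of size comparable to $|\theta_k(t_k)|$, and inserted into $\int d_{k\pm1}^{\mp}\approx \pi c\eta/k^3$ it produces a contribution of exactly the same order as the main term $\approx \tfrac{c\eta\pi}{2k^3}|\theta_k(t_k)|$; since the Fourier coefficients carry phases, nothing you invoke rules out that it cancels the main term (in the inviscid limit $G_k\approx \pm\tfrac{1}{1+(\eta/k-t)^2}\theta_k$ makes the two integrals literally comparable). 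This is precisely where the paper uses $\nu k^2\ge 4$: the viscous decay bounds the $\theta_k$-driven part of $G_k$ pointwise by $\tfrac{2}{\nu k^2}|\theta_k(t_k)|\ll |\theta_k(t_k)|$, and the paper stresses this is the only place dissipation enters a lower bound. You invoke $\nu k^2\ge 4$ only later, to propagate $|\theta_{k-1}|\ge 4|G_{k-1}|$ to the next step, so the anti-cancellation step is missing where it is needed. Relatedly, your error budget does not close as stated: the initial-datum part alone, estimated only via $|G_k(t_k)|\le\tfrac14|\theta_k(t_k)|$ and $\int d_{k\pm1}^{\mp}\le \pi c\eta/k^3$, can be as large as $\tfrac{c\eta\pi}{4k^3}|\theta_k(t_k)|$, already exceeding the total allowance $\tfrac{c\eta\pi}{6k^3}|\theta_k(t_k)|$ you claim; to reach the constant in \eqref{eq:lowerb} this term, too, must be improved using the dissipative factor, as is done in the paper.
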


We remark that here Lemma \ref{lemma:inductionupper} ensures that
\eqref{eq:inductiongrowth} holds for $k=k_2$.
The condition $\nu k^2\geq 4$ here is the first and only time we explicitly use
the dissipation and is used to establish lower bounds (but not required for
upper bounds).
More precisely, we recall that  
\begin{align*}
  \nu \p_y \phi = \p_y \p_x \Delta_t^{-2} \theta + \p_y \p_x^{-1}\Delta_{t}^{-1} G. 
\end{align*}
Hence, in general control of the velocity requires control of both $\theta$ and
$G$. In particular, if $G$ is not smaller than $\theta$ but of comparable size
and with an opposite sign it could be that even at resonant times $\p_y \theta$
is very small even if $\theta$ is large.
Thus, in order to avoid such cancellations we use dissipation to ensure that $G$
is much smaller than $\theta$.
Here we stress that for $\nu\geq 4$ this condition is trivial, but for $\nu$
small restricts us to considering $k$ large.
We thus introduced a time threshold $t_{k_3}$ with $k_3= \frac{1}{1000}k_0$
until which this lower bound is satisfied.
As we show in Section \ref{sec:asym} after this time all upper bounds can be
established in the same way and lower bounds remain true at least for
frequencies larger than $k_3+1$.

Before beginning the proof, we briefly discuss the lower bound.
Iteratively applying \eqref{eq:lowerb} we observe that
\begin{align*}
  |\theta_{k_3 +1 \pm 1}(t_{k_3})| \geq |\theta_{k_2}(t_{k_2})| \prod_{j=k_2}^{k_3} \frac{c\eta \pi}{3j^3}.
\end{align*}
Hence, it suffices to bound the latter product from below.
  Indeed, we obtain a lower bound by 
  \begin{align*}
    \prod_{k=\frac{k_0}{1000}}^{k_0/2} \frac{c\eta \pi}{2k^3} = \frac{(c\eta\pi/2)^{k_0/2}}{(\frac{k_0}{2}!)^3}\frac{(\frac{k_0}{1000}!)^3} {(c\eta\pi/2)^{k_0/1000}}.
  \end{align*}
  We may then use Stirling's approximation to compute the first factor as
  approximately
  \begin{align*}
    2^{3 \frac{k_0}{2} } e^{3 \frac{k_0}{2}} (\sqrt{2\pi \frac{k_0}{2}})^{-3}
  \end{align*}
  and the second factor as
  \begin{align*}
    1000^{-3 \frac{k_0}{1000}} e^{-3 \frac{k_0}{1000}} (\sqrt{2\pi \frac{k_0}{1000}})^{3}.
  \end{align*}
  Using the fact that $n^{1/n}\rightarrow 1$ as $n\rightarrow \infty$ (and is
  about $1.007$ for $n=1000$) the first factors are easily dominated by the
  growth of $2^{3 \frac{k_0}{2}}$.
  Thus, we obtain a lower bound by
  \begin{align*}
    e^{\frac{3}{2}k_0 - \frac{3}{1000}k_0}\geq e^{k_0},
  \end{align*}
  as claimed.

\begin{proof}[Proof of Proposition \ref{prop:inductionblow}]
The heuristic idea of our proof is that at time $t_{k-1}$
\begin{align*}
  \theta_{k-1}(t_{k-1}) &\approx \theta_{k-1}(t_{k}) + \theta_k(t_k) \int_{t_{k}}^{t_{k-1}} \frac{c\eta}{k^3} \frac{1}{(1+(\frac{\eta}{k}-t)^2)^2} \\
  &\approx \theta_{k-1}(t_{k}) + \theta_k(t_k) \frac{c\eta \pi}{2k^3}. 
\end{align*}
Since by our choice of $k$ it holds that
\begin{align*}
  \frac{c\eta \pi}{2k^3} \geq 4^3=64
\end{align*}
this suggests that $\theta_k(t_k) \frac{c\eta \pi}{2k^3}$ dominates all other
contributions, which implies the lower bound \eqref{eq:lowerb}.
Moreover, while all other modes except $\theta_{k\pm 1}$ may also grow by some factor, this factor is
much smaller than $\frac{c\eta \pi}{2k^3}$. Hence, at time $t_{k-1}$ the mode
$\theta_{k-1}$ will be one of the largest modes and hence satisfy
\eqref{eq:inductiongrowth} with $k$ replaced by $k-1$.

It remains to make this heuristic rigorous, for which we employ a bootstrap
approach similar to the one of Section \ref{sec:G}. We remark that, since we do not require
$\ell^2$-based estimates, we here do not need to decompose into single mode
data.
For simplicity of notation we may without loss of generality assume that
\begin{align*}
  \theta_k(t_k).
\end{align*}

We then make the bootstrap assumptions that for $t_k\leq T\leq T_{\star}$ it
holds that
\begin{enumerate}[start=1, label={(E\arabic*)}]
\item \label{item:thetak}$|\theta_{k}(T)-1|\leq 0.1$. 
\item \label{item:thetal}$|\theta_{l}(T)|\leq 4$ for all $l \not \in \{k-1,k+1\}$.
\item \label{item:thetak1}$|\theta_{k\pm 1}(T)-\theta_{k\pm 1}(t_k) - \int_{t_k}^T \frac{c\eta}{k^3}
  \frac{1}{(1+(\frac{\eta}{k}-t)^2)^2} \theta_k(t) dt -\int_{t_k}^T \frac{c\eta}{k^3}
  \frac{1}{1+(\frac{\eta}{k}-t)^2} G_k(t) dt| \\ \leq 0.1$. 
\item \label{item:Gk}$|G_k(T)- \exp(-\nu \int_{t_k}^T k^2+(\eta-ks)^2ds)
  G_k(t_k)\\ -\int_{t_k}^T \exp(\nu \int_{t_k}^T k^2+(\eta-ks)^2ds)
  \frac{2(\frac{\eta}{k}-t)}{(1+(\frac{\eta}{k}-t)^2)^2}\theta_k(t) dt |\leq 0.1$. 
\item \label{item:Gl}$|G_l(T)| \leq 4$. 
\end{enumerate}
By local well-posedness these estimates are satisfied at least for a short time.
Similarly to the proof of Proposition \ref{prop:res} we in the following show
that these estimates self-improve and hence remain valid until time $t_{k-1}$.

\underline{Improving \ref{item:thetak}:}
We recall that by the integral equations \eqref{eq:integralequations}
\begin{align*}
  \theta_{k}(T)-1= \int_{t_k}^T c_{k}^{+}\theta_{k+1} +c_{k}^{-} \theta_{k-1} + d_{k}^{+}G_{k+1} + d_{k}^{-} G_{k-1}.
\end{align*}
Furthermore, by the estimates collected in Lemma \ref{lemma:coeffestimates} it
holds that
\begin{align}
  \label{eq:nonrestheta}
  \begin{split}
  \int_{t_k}^t c_{k}^{\pm} &\leq \frac{4c}{k} (\frac{\eta}{k^2})^{-2}, \\
  \int_{t_k}^t d_{k}^{\pm} &\leq \frac{4c}{k}.
\end{split}
\end{align}
Using the bootstrap estimates \ref{item:thetak1}--\ref{item:Gl} it then follows
that
\begin{align*}
  |\theta_{k}(T)-1| \leq 20 c < 0.1,
\end{align*}
and thus \ref{item:thetak} is improved.

\underline{Improving \ref{item:thetal}:}
We recall that by the integral equations \eqref{eq:integralequations} it holds
that
\begin{align*}
  \theta_l(T)- \theta_l(t_k) &= \int_{t_k}^T c_{l}^{+} \theta_{l+1} + c_{l}^{-} \theta_{l-1} + d_{l}^{+} G_{l+1} + d_{l}^{-} G_{l-1} dt.
\end{align*}
Since $l \not \in \{k-1,k+1\}$ all coefficient functions are non-resonant and
hence satisfy \eqref{eq:nonrestheta}.
Using the bootstrap assumptions \ref{item:thetak1}--\ref{item:Gl} it then
follows that the right-hand-side is much smaller than $2$, while
$|\theta_l(t_k)| \leq 2 |\theta_{k}(t_k)|=2$ by \eqref{eq:inductiongrowth}.
Hence, this bootstrap bound improves.

\underline{Improving \ref{item:thetak1}:}
By the integral equations it holds that \eqref{eq:integralequations}
\begin{align*}
  \theta_{k\pm 1}(T)-\theta_{k\pm 1}(t_k) = \int_{t_k}^T c_{k\pm 1}^{\mp} \theta_{k} +c_{k \pm 1}^{\pm} \theta_{k\pm 2} + d_{k \pm 1}^{\mp}G_{k} + d_{k\pm 2}^{\pm} G_{k\pm 2}.
\end{align*}
We thus only need to estimate
\begin{align*}
  \int_{t_k}^Tc_{k \pm 1}^{\pm} \theta_{k\pm 2}+ d_{k\pm 2}^{\pm} G_{k\pm 2}
\end{align*}
from above. Here we again use the estimates
\eqref{eq:nonrestheta} and control $\theta_{k\pm 2}, G_{k\pm 2}$ by the
bootstrap assumptions \ref{item:thetal}, \ref{item:Gl}.

We remark that 
\begin{align*}
  \int_{t_k}^T c_{k\pm 1}^{\mp} \leq \frac{c\eta\pi}{2k^3},\\
  \int_{t_k}^T d_{k\pm 1}^{\mp} \leq \frac{c\eta\pi}{k^3}
\end{align*}
and that $\theta_{k}(t), G_k(t)\leq 2$ by the bootstrap assumptions
\ref{item:thetak},\ref{item:Gk}.
Hence, this estimate also provides an upper bound on the size of $\theta_{k\pm 1}(T)$.

\underline{Improving \ref{item:Gl}:}
We recall that by the integral equations \eqref{eq:integralequations} it holds that
\begin{align*}
   G_{l}(T)- &\exp\left(-\nu\int_{t_k}^{T}l^2+(\eta-lt)^2 dt\right) G_{l}(t_k)\\
  &= \int_{t_k}^{T}\exp(-\nu\int_{t}^{T}l^2+(\eta-l\tau)^2 d\tau)2\frac{(\frac{\eta}{l}-t)}{(1+(\frac{\eta}{l}-t)^2)^2} \theta_{l} dt \\
  &\quad + \int_{t_k}^{T}\exp(-\nu\int_{t}^{T}l^2+(\eta-l\tau)^2 d\tau) f \frac{\nu}{g} il (d_l^{+} G_{l+1} + d_l^{-} G_{l-1}) dt \\
  &\quad + \int_{t_k}^{T}\exp(-\nu\int_{t}^{T}l^2+(\eta-l\tau)^2 d\tau) f \frac{\nu}{g} il (c_l^{+} \theta_{l+1} + c_l^{-} \theta_{l-1}) dt \\
  & \quad + \int_{t_k}^{T}\exp(-\nu\int_{t}^{T}l^2+(\eta-l\tau)^2 d\tau)\frac{1}{1+(\frac{\eta}{l}-t)^2}\left( c_{l}^{+} \theta_{l+1} + c_{l}^{-} \theta_{l-1}\right) dt \\
  & \quad + \int_{t_k}^{T}\exp(-\nu\int_{t}^{T}l^2+(\eta-l\tau)^2 d\tau) \frac{1}{1+(\frac{\eta}{l}-t)^2} \left(d_{l}^{+} G_{l+1} + d_{l}^{-} G_{l-1} \right) dt.
\end{align*}
Since $l\neq k$ the estimate \eqref{eq:Gthetanonres} of Lemma
\ref{lemma:coefficientsForG} holds:
\begin{align*}
  \int_{t_k}^{T}\exp(-\nu\int_{t}^{T}l^2+(\eta-l\tau)^2 d\tau) \left| 2 \frac{(\frac{\eta}{l}-t)}{(1+(\frac{\eta}{l}-t)^2)^2} \right| dt\leq 2 (\frac{\eta}{k^2})^{-2}.
\end{align*}
Similarly, by the estimates \eqref{eq:Gf}, \eqref{eq:Gthetac} and \eqref{eq:GGd}
all other integrals can be controlled by
\begin{align*}
  c (\frac{\eta}{k^2})^{-1}
\end{align*}
times the $L^\infty$ norms of $G_{l\pm 1}, \theta_{l\pm 1}$, which are
controlled by the bootstrap assumptions \ref{item:thetak}--\ref{item:Gl}.
Therefore, in conclusion
\begin{align*}
  |G_{l}(T)- \exp\left(-\nu\int_{t_k}^{T}l^2+(\eta-lt)^2 dt\right) G_{l}(t_k)| &\leq 0.1\\
  \Rightarrow |G_l(T)|\leq 2+ 0.1 &<4.
\end{align*}

\underline{Improving \ref{item:Gk}:}
Similarly to the improvement of \ref{item:Gl} we observe that
\begin{align*}
   G_{k}(T)&- \exp\left(-\nu\int_{t_k}^{T}k^2+(\eta-kt)^2 dt\right) G_{k}(t_k)\\
  &- \int_{t_k}^{T}\exp(-\nu\int_{t}^{T}k^2+(\eta-k\tau)^2 d\tau)2\frac{(\frac{\eta}{k}-t)}{(1+(\frac{\eta}{k}-t)^2)^2} \theta_{k} dt \\
  &= \int_{t_k}^{T}\exp(-\nu\int_{t}^{T}k^2+(\eta-k\tau)^2 d\tau) f \frac{\nu}{g} ik (d_k^{+} G_{k+1} + d_k^{-} G_{k-1}) dt \\
  &\quad + \int_{t_k}^{T}\exp(-\nu\int_{t}^{T}k^2+(\eta-k\tau)^2 d\tau) f \frac{\nu}{g} ik (c_k^{+} \theta_{k+1} + c_k^{-} \theta_{k-1}) dt \\
  & \quad + \int_{t_k}^{T}\exp(-\nu\int_{t}^{T}k^2+(\eta-k\tau)^2 d\tau)\frac{1}{1+(\frac{\eta}{k}-t)^2}\left( c_{k}^{+} \theta_{k+1} + c_{k}^{-} \theta_{k-1}\right) dt \\
  & \quad + \int_{t_k}^{T}\exp(-\nu\int_{t}^{T}k^2+(\eta-k\tau)^2 d\tau) \frac{1}{1+(\frac{\eta}{k}-t)^2} \left(d_{k}^{+} G_{k+1} + d_{k}^{-} G_{k-1} \right) dt.
\end{align*}
The contributions on the right-hand-side are then controlled by \eqref{eq:Gf},
\eqref{eq:Gthetac} and \eqref{eq:GGd} of Lemma \ref{lemma:coefficientsForG} and
the bootstrap assumptions and thus \ref{item:Gk} improves.

Thus, all bootstrap estimates improve and hence remain valid at least until time $t_{k-1}$.

\underline{Establishing the lower bound \eqref{eq:lowerb} and \eqref{eq:inductiongrowth}:}
As the last step of our proof we show that the bootstrap estimates
\ref{item:thetak}--\ref{item:Gl} at time $t_{k-1}$ imply the desired lower bound
and that the solution satisfies \eqref{eq:inductiongrowth} at that time.

Here we first observe that by \ref{item:thetak}, \ref{item:thetal},
\ref{item:Gk}, \ref{item:Gl} all modes except $\theta_{k\pm 1}$ are bounded
above by $4$.

We next study \ref{item:thetak1}:
\begin{align*}
  |\theta_{k\pm 1}(T)-\theta_{k\pm 1}(t_k) &- \int_{t_k}^T \frac{c\eta}{k^3}
  \frac{1}{(1+(\frac{\eta}{k}-t)^2)^2} \theta_k(t) dt \\
  &-\int_{t_k}^T \frac{c\eta}{k^3}
  \frac{1}{1+(\frac{\eta}{k}-t)^2} G_k(t) dt| \leq 0.1
\end{align*}
Here it holds that
\begin{align}
 \label{eq:mainfactor} 
  \int_{t_k}^{t_{k-1}} \frac{c\eta}{k^3} \frac{1}{(1+(\frac{\eta}{k}-t)^2)^2} \approx \frac{c\eta\pi}{2k^3}\geq 32,
\end{align}
within a factor $1.2$, since $\theta_k$ is controlled by \ref{item:thetak} and
$|t_k-\frac{\eta}{k}|, |t_{k-1}-\frac{\eta}{k}|$ are sufficiently large to
approximate by the integral over all of $\R$.

It thus only remains to show that the contribution by
\begin{align*}
  \int_{t_k}^{t_{k-1}} \frac{c\eta}{k^3}\frac{1}{1+(\frac{\eta}{k}-t)^2} G_k(t) dt
\end{align*}
does not cancel this growth. That is, this integral should not be close to
\begin{align*}
  \int_{t_k}^{t_{k-1}} \frac{c\eta}{k^3}\frac{1}{(1+(\frac{\eta}{k}-t)^2)^2} \theta_k(t) dt.
\end{align*}
We here recall that by \ref{item:Gk}
\begin{align*}
  |G_k(T)&- \exp(\nu \int_{t_k}^T k^2+(\eta-ks)^2ds)
  G_k(t_k)\\
  &- \int_{t_k}^T \exp(-\nu \int_{t_k}^T k^2+(\eta-ks)^2ds)
  \frac{2(\frac{\eta}{k}-t)}{(1+(\frac{\eta}{k}-t)^2)^2}\theta_k(t) dt |\leq 0.1.
\end{align*}
Hence $G_k(T)$ is determined by $G_{k}(t_k)$ and $\theta_k(t_k)$ up to a
negligible error (compared to \eqref{eq:mainfactor}).

Let us first discuss
\begin{align*}
  \int_{t_k}^{t_{k-1}} \frac{c\eta}{k^3}\frac{1}{1+(\frac{\eta}{k}-t)^2}\exp(\nu \int_{t_k}^T k^2+(\eta-ks)^2ds) G_k(t_k) dt.
\end{align*}
Here, by assumption \eqref{eq:inductiongrowth}, we know that
\begin{align*}
  \exp(-\nu \int_{t_k}^T k^2+(\eta-ks)^2ds) |G_k(t_k)| \leq |G_k(t_k)|\leq \frac{1}{4}.
\end{align*}
and hence
\begin{align*}
  \int_{t_k}^{t_{k-1}} \frac{c\eta}{k^3}\frac{1}{1+(\frac{\eta}{k}-t)^2}\exp(-\nu \int_{t_k}^t k^2+(\eta-ks)^2ds) G_k(t_k) dt \leq \frac{1}{2} \frac{c\eta\pi}{k^32}
\end{align*}
is smaller than \eqref{eq:mainfactor} regardless of $\nu$.
Moreover, if $k$ and $\nu$ are such that $\nu k^2$ is large, then we may instead control
\begin{align*}
  \int_{t_k}^{t_{k-1}} \frac{c\eta}{k^3}\frac{1}{1+(\frac{\eta}{k}-t)^2}\exp(-\nu \int_{t_k}^t k^2+(\eta-ks)^2ds) G_k(t_k) \\
  \leq \frac{c\eta}{k^3} \frac{1}{\nu k^2}  |G_k(t_k)|,
\end{align*}
which is smaller than \eqref{eq:mainfactor} provided $\frac{1}{\nu k^2}$ is much
smaller than $\pi$.

It hence remains to discuss
\begin{align*}
  \int_{t_k}^T \exp(-\nu \int_{t_k}^T k^2+(\eta-ks)^2ds)
  \frac{2(\frac{\eta}{k}-t)}{(1+(\frac{\eta}{k}-t)^2)^2}\theta_k(t) dt.
\end{align*}
Here we observe that
\begin{align*}
  \int_{t_k}^T \frac{2(\frac{\eta}{k}-t)}{(1+(\frac{\eta}{k}-t)^2)^2} dt = \frac{1}{1+(\frac{\eta}{k}-T)^2} - \frac{1}{1+(\frac{\eta}{k}-t_k)^2}
\end{align*}
and that
\begin{align*}
  G_k(t)= - \frac{1}{1+(\frac{\eta}{k}-t)^2}\theta_k(t)
\end{align*}
would imply that the contribution by $G_k(t)$ exactly cancels
\eqref{eq:mainfactor}.
Thus, we need to make use of the decay due to the dissipation to rule out such
cancellation.
Indeed, bounding
\begin{align*}
  \int_{t_k}^T k^2+(\eta-ks)^2ds \geq k^2(T-t_k),
\end{align*}
we may control
\begin{align*}
  \int_{t_k}^T \exp(-\nu \int_{t_k}^T k^2+(\eta-ks)^2ds)
  |\frac{2(\frac{\eta}{k}-t)}{(1+(\frac{\eta}{k}-t)^2)^2}| \leq \frac{2}{\nu k^2},
\end{align*}
which is much smaller than $1$ by assumption on $\nu$ and $k$.

Thus, in summary
\begin{align*}
  |\theta_{k\pm 1}(t_{k-1}) | \approx \frac{c\eta\pi}{2k^3} |\theta_{k}(t_k)|\geq 32
\end{align*}
as claimed in \eqref{eq:lowerb} and all other modes are bounded above by $4$ and
thus also \eqref{eq:inductiongrowth} holds.
\end{proof}

\subsection{Persistence of Lower Bounds}
\label{sec:asym}

In this subsection we consider the evolution on the time interval
\begin{align*}
  (t_{k_3}, \infty),
\end{align*}
where we recall that the evolution after time $t_0=2\eta$ is asymptotically stable by Theorem
\ref{theorem:easy}.
As remarked in Sections \ref{sec:upper} and \ref{sec:norminf} here we do not
anymore derive lower bounds on the norm inflation since a priori there could be
cancellation. Instead we derive upper bounds on the possible growth, which then
allow us to also prove lower bounds on $\theta_{k_3+2}(t)$ for all times $t\geq
t_{k_3}$.
Thus norm inflation persists for all times.

\begin{lemma}
  \label{lemma:persist}
  Suppose that at the time $t_{k_3}$ it holds that
  \begin{align*}
    |\theta_{k_3+2}| \geq 0.5 \|\theta\|_{\ell^\infty} + 10 \|G\|_{\ell^\infty}.   
  \end{align*}
  and for $k\leq k_3$ define
  \begin{align*}
    C_{k_3}&= 2 |\theta_{k_3+2}(t_{k_3})|, \\
    C_{k-1}&= (1+(\frac{\eta}{k^2})^{-1}) C_{k}.
  \end{align*}
  Then for all $k\leq k_3$ it holds that 
  \begin{align}
    \label{eq:induction3}
    \begin{split}
    |\theta_l(t_k)|&\leq C_k,  \text{ for } l\geq k_3+2, \\
    |\theta_l(t_k)|&\leq C_{k}\prod_{j=k}^{k_1} \frac{c\eta \pi}{j^3}, \text{ for } k_3+1> l>k+1,\\
    |\theta_l(t_k) & \leq C_{k}\prod_{j=k+1}^{k_1} \frac{c\eta \pi}{j^3}, \text{ for } l\leq k, \\
    |G_l(t_k)|&\leq C_{k} (\frac{\eta}{k^2})^{-2} \prod_{j=k}^{k_1} \frac{c\eta \pi}{j^3}, \text{ for } k_3+1> l>k+1,\\
    |G_l(t_k)| & \leq 2C_{k}\prod_{j=k+1}^{k_1} \frac{c\eta \pi}{j^3}, \text{ for } l\leq k.
    \end{split}
    \end{align}
    Moreover, the bounds for $t_0$ persist for all times $t>t_0$ up to a
    possible growth by a constant $(1+c)$ and for all $t\geq t_{k_{2}}$ it holds that
    \begin{align*}
      |\theta_{k_3+2}(t)| \geq e^{-2} |\theta_{k_3+2}(t_{k_3})|.
    \end{align*}
  \end{lemma}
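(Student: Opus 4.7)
The plan is to prove the chain of upper bounds \eqref{eq:induction3} by downward induction on $k$ starting from $k = k_3$, following the same bootstrap strategy used in Lemma \ref{lemma:inductionupper} and Proposition \ref{prop:inductionblow}, but now in the fully resonant regime where $c\eta/k^3 \gtrsim 1$. The base case is immediate from the hypothesis: the mode $\theta_{k_3+2}$ dominates by assumption, all other modes satisfy $|\theta_l|, |G_l| \leq 0.5 |\theta_{k_3+2}(t_{k_3})| + 10 \|G(t_{k_3})\|_{\ell^\infty} \leq C_{k_3}$, and at $k = k_3$ the growth product is empty.

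For the induction step from $t_k$ to $t_{k-1}$ I would use the same "moving dent" picture already developed in Lemma \ref{lemma:inductionupper}. On the interval $I_{k-1}$ the only resonant coefficients are $c^{\mp}_{k \pm 1}$ and $d^{\mp}_{k \pm 1}$, which by Lemma \ref{lemma:coeffestimates} integrate to $\tfrac{c\eta\pi}{(k-1)^3}$ and $\tfrac{c\eta\pi}{2(k-1)^3}$ respectively, while all other coefficients integrate to at most $\tfrac{4c}{k}(\tfrac{\eta}{k^2})^{-2}$ or $\tfrac{4c}{k}$, both dominated by $(\tfrac{\eta}{k^2})^{-1}$. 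Feeding the induction hypothesis into the integral equations \eqref{eq:integralequations} and using the estimates of Lemma \ref{lemma:coefficientsForG} for $G$, the modes outside $\{k, k-2\}$ only acquire an extra multiplicative factor $(1+(\tfrac{\eta}{k^2})^{-1})$, precisely matching the recursion $C_{k-1} = (1+(\tfrac{\eta}{k^2})^{-1}) C_k$, while the two neighboring modes $\theta_k, \theta_{k-2}$ pick up the resonant factor $\tfrac{c\eta\pi}{(k-1)^3}$, exactly the new entry in the growth product. A short bootstrap with constants analogous to \ref{item:thetak}--\ref{item:Gl} then closes the step.

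To establish persistence for $t > t_0 = 2\eta$ I would invoke Theorem \ref{theorem:easy}, which gives stability of $\|\theta\|_{X} + \|G\|_X$ on $(2\eta,\infty)$ up to a factor $(1+c)$, so that all bounds at $t_0$ propagate. For the sharper lower bound $|\theta_{k_3+2}(t)| \geq e^{-2} |\theta_{k_3+2}(t_{k_3})|$ for all $t \geq t_{k_3}$, the point is that the mode $k_3+2$ is never itself resonant on any $I_k$ with $k \leq k_3$: its distance to the resonant modes $k \pm 1$ is at least $2$, so its evolution is driven solely by non-resonant coefficients. Integrating the equation for $\theta_{k_3+2}$ and using the non-resonant estimate \eqref{eq:nonresonantcoeffs}, together with the bounds \eqref{eq:induction3} just proved, yields $|\theta_{k_3+2}(t) - \theta_{k_3+2}(t_{k_3})| \lesssim \sum_{k \leq k_3}(\tfrac{\eta}{k^2})^{-1} C_k$. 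Since $\sum_{k \leq k_3} \tfrac{k^2}{\eta} \lesssim \tfrac{k_3^3}{\eta}$, which by $k_3 = k_0/1000$ and $k_0 \approx (c\eta\pi)^{1/3}$ is at most $10^{-9} c\pi$, this sum is much smaller than $1 - e^{-2}$, giving the claim.

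The main obstacle will be the bookkeeping: one must carefully distinguish between modes $l \geq k_3 + 2$ (still at the slowly-growing level $C_k$), modes inside the "active band" $k+1 < l < k_3+1$ where the cumulative growth product has accumulated, and modes $l \leq k$ that acquired an additional factor at the moment the dent crossed them. Verifying that the estimate for $\theta_{k+1}$ matches on both sides of the resonance event (mirroring the dent analysis of Figure \ref{fig:dent}) requires checking each case of $l$ against both the pre- and post-resonance bounds. A subtler point is the control of $G$: the forcing $2(\tfrac{\eta}{l}-t)(1+(\tfrac{\eta}{l}-t)^2)^{-2} \theta_l$ in \eqref{eq:integralequations} integrates to a quantity of order $2$ rather than $o(1)$ in the resonant case $l = k$, by \eqref{eq:Gthetares}, which is why the $G_l$ bound for $l \leq k$ must be allowed at the same order as $\theta_l$ rather than suppressed by $(\tfrac{\eta}{k^2})^{-2}$; the bootstrap closes because this contribution is non-resonant for the $\theta$-equation at the next step and hence cannot cascade.
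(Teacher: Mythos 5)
Your overall strategy coincides with the paper's: the bounds \eqref{eq:induction3} are obtained by the same downward induction/bootstrap already used in Lemma \ref{lemma:inductionupper} and Proposition \ref{prop:inductionblow} (the paper simply defers to those proofs, including the device of allowing a rougher bound for $G_k$ during most of the resonant interval and recovering it through the dissipative factor near its end), and the persistence of $\theta_{k_3+2}$ is argued exactly as you propose, from the fact that after $t_{k_3}$ every coefficient in its equation is non-resonant, so its total change is a small multiple of the (controlled) neighboring modes. One bookkeeping slip: by Definition \ref{defi:Ik} the step from $t_k$ to $t_{k-1}$ is the interval $I_k$, on which the resonant mode is $k$ and the modes that grow are $k\pm 1$ (not $I_{k-1}$ with $\theta_k,\theta_{k-2}$), and the resonant integrals are $\int c^{\mp}_{k\pm1}\approx\frac{c\eta\pi}{2k^3}$ and $\int d^{\mp}_{k\pm1}\approx\frac{c\eta\pi}{k^3}$, i.e.\ you have interchanged the factor $\frac12$; this only relabels your induction step and does not affect the mechanism.

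The one step that would fail as written is the treatment of $t>t_0=2\eta$ by appealing to Theorem \ref{theorem:easy} "so that all bounds at $t_0$ propagate." That theorem controls $\|\theta\|_X+\|G\|_X$ and the change $\|\theta(t)-\theta(t_0)\|_X$ only in terms of the \emph{full} norms at $t_0$, and by the very upper bounds you have just proved the modes below $k_3$ may at time $t_0$ exceed $|\theta_{k_3+2}|$ by a factor of order $\prod_{j\leq k_3}\frac{c\eta\pi}{j^3}$, which grows exponentially in $k_0$; consequently $c\eta^{-1}\bigl(\|\theta(t_0)\|_X+\|G(t_0)\|_X\bigr)$ need not be small compared with $e^{k_0}$, and neither norm stability nor the perturbation-of-identity bound yields the mode-wise lower bound on the single mode $\theta_{k_3+2}$, nor the claimed persistence of the mode-wise bounds up to a factor $(1+c)$. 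The paper's remedy, which is also the natural completion of your own argument, is to observe that for $t\geq t_0$ \emph{all} coefficients are non-resonant and to run the same mode-wise integral estimates on $(t_0,\infty)$, thereby propagating in particular the bounds on the neighbors $\theta_{k_3+1},\theta_{k_3+3}$ and the corresponding $G$-modes; once these are available on the whole half line, your non-resonant estimate for the change of $\theta_{k_3+2}$ applies on all of $(t_{k_3},\infty)$ and gives the stated lower bound.
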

  As in Section \ref{sec:upper} we here note that the upper bounds \eqref{eq:induction3} have a ``dent'',
  where $\theta_{k+2}$ and $\theta_{k}$ satisfy the same upper bounds and
  $\theta_{k-1}$ is potentially much smaller (see Figure \ref{fig:dent2}).
  Here we further observe that the resonance mechanism during a time interval
  $I_{k}$ may only cause the modes $k-1,k+1$ to exhibit large change, while
  modes larger than $k+1$ remain mostly unchanged.
  In particular, the mode $k_3+2$ only mildly changes after time $t_{k_3}$
  and hence the lower bound persists. 
  \begin{figure}[htbp]
    \centering
    \includegraphics[width=0.5\linewidth]{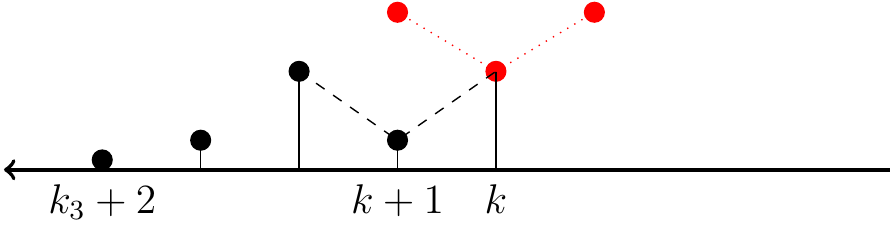}
    \caption{Growth bounds after time $t_{k_3}$.
      Following a similar strategy as in Section \ref{sec:upper} we obtain upper
      bounds for the resonances during the time intervals $I_k$. Here the mode
      $\theta_k$ highlighted in red may cause the modes $\theta_{k+1}$ and $\theta_{k-1}$ to grow,
      thus resulting in a new dent.
    }
    \label{fig:dent2}
  \end{figure}
  We further remark that the assumptions at the time $t_{k_3}$ are ensured by
  Proposition \ref{prop:inductionblow} of the preceding section.
  This lemma hence proves that the lower bound on $\theta_{k_3+2}$ persists
  until time $t_0$ up to a loss of constant. Thus norm inflation in
  $\ell^\infty$ (and hence also in $\ell^2$) has been achieved at that time.
  Moreover, this lower bound then persists also for all future times by the same
  argument as in Section \ref{sec:small}.

  \begin{proof}[Proof of Lemma \ref{lemma:persist}]
    After normalization we may assume that $C_{k_3}=1$ and deduce that deduce
    that $C_{k}\leq e^{\frac{1}{1000}}$ for all $k$.
    Hence, supposing that the claimed estimates hold we observe that for all
    $t\geq t_{k_2}$ it holds that 
  \begin{align*}
    |\theta_{k_3+1}|  &\leq e^{\frac{1}{1000}} \frac{c\eta\pi}{k_3^3}, \\
    |\theta_{k_3+3}|  &\leq e^{\frac{1}{1000}}.
  \end{align*}
  Since $k_3+2$ is non-resonant on the interval $(t_{k_3}, \infty)$ this then implies that
  \begin{align*}
    |\theta_{k_3+2}(t)- \theta_{k_3+2}(t_{k_3})| \ll 1
  \end{align*}
  and thus $\theta_{k_3+2}(t) \approx \theta_{k_3+2}(t_{k_3})$, as desired.
  
  The proof of the induction step for \eqref{eq:induction3} follows by the same
  argument as in the proofs of Proposition \ref{prop:inductionblow} and of Lemma
  \ref{lemma:inductionupper}. In particular, we again establish slightly rougher
  upper bounds for the mode $G_{k}$ on $(t_{k}, t_{k-1}-3)$ and then recover the
  desired bounds by using the fast exponential decay.
  We omit the details for brevity.
  We further remark that for $t\geq t_{0}$ all integrals are non-resonant (see
  also Section \ref{sec:small}). Thus we may use the same proof on the interval $(t_0,\infty)$.
  
  This concludes the proof of Proposition \ref{prop:timeregimes} and thus of
  Theorem \ref{theorem:blow-up}.
\end{proof}

In this article we have shown that, despite viscous dissipation, the
Boussinesq equations linearized around \emph{traveling waves} exhibit norm inflation and blow-up.
Yet, for certain critical data we can prescribe the blow-up in a fine enough way
that damping of the velocity field persists.
These results hence show that damping of the \emph{velocity} field is a more robust
effect than asymptotic stability of the \emph{vorticity} and \emph{temperature}.
Moreover, while classically considered a nonlinear effect, we show that
\emph{echoes} and their corresponding resonances are a feature of the linearized
problem around traveling waves.

\subsection*{Acknowledgments}
Funded by the Deutsche Forschungsgemeinschaft (DFG, German Research Foundation) – Project-ID 258734477 – SFB 1173.

\bibliography{citations2}

\newcommand{\etalchar}[1]{$^{#1}$}
\begin{thebibliography}{BBCZD21}

\bibitem[BBCZD21]{bedrossian21}
Jacob Bedrossian, Roberta Bianchini, Michele Coti~Zelati, and Michele Dolce.
\newblock Nonlinear inviscid damping and shear-buoyancy instability in the
  two-dimensional {B}oussinesq equations.
\newblock {\em arXiv preprint arXiv:2103.13713}, 2021.

\bibitem[BCZV19]{BCZVvortex2017}
Jacob Bedrossian, Michele Coti~Zelati, and Vlad Vicol.
\newblock {Vortex axisymmetrization, inviscid damping, and vorticity depletion
  in the linearized 2D {E}uler equations}.
\newblock {\em Annals of PDE}, 5(1):1--192, 2019.

\bibitem[Bed20]{bedrossian2016nonlinear}
Jacob Bedrossian.
\newblock Nonlinear echoes and {L}andau damping with insufficient regularity.
\newblock {\em Tunisian Journal of Mathematics}, 3:121--205, 2020.

\bibitem[BM14]{bedrossian2013asymptotic}
Jacob Bedrossian and Nader Masmoudi.
\newblock Asymptotic stability for the {C}ouette flow in the {2D} {E}uler
  equations.
\newblock {\em Applied Mathematics Research eXpress}, 2014(1):157--175, 2014.

\bibitem[BM15]{bedrossian2015inviscid}
Jacob Bedrossian and Nader Masmoudi.
\newblock Inviscid damping and the asymptotic stability of planar shear flows
  in the 2{D} {E}uler equations.
\newblock {\em Publ. Math. Inst. Hautes \'Etudes Sci.}, 122:195--300, 2015.

\bibitem[Cha06]{chae2006global}
Dongho Chae.
\newblock Global regularity for the 2d {B}oussinesq equations with partial
  viscosity terms.
\newblock {\em Advances in Mathematics}, 203(2):497--513, 2006.

\bibitem[CKN99]{chae1999local}
Dongho Chae, Sung-Ki Kim, and Hee-Seok Nam.
\newblock Local existence and blow-up criterion of {H}{\"o}lder continuous
  solutions of the {B}oussinesq equations.
\newblock {\em Nagoya Mathematical Journal}, 155:55--80, 1999.

\bibitem[CZZ19]{coti2019degenerate}
Michele Coti~Zelati and Christian Zillinger.
\newblock On degenerate circular and shear flows: the point vortex and power
  law circular flows.
\newblock {\em Communications in Partial Differential Equations},
  44(2):110--155, 2019.

\bibitem[DM18]{dengmasmoudi2018}
Yu~Deng and Nader Masmoudi.
\newblock Long time instability of the {C}ouette flow in low {G}evrey spaces.
\newblock {\em arXiv preprint arXiv:1803.01246}, 2018.

\bibitem[DWXZ20]{dong2020stability2}
Boqing Dong, Jiahong Wu, Xiaojing Xu, and Ning Zhu.
\newblock Stability and exponential decay for the 2d anisotropic {B}oussinesq
  equations with horizontal dissipation.
\newblock {\em arXiv preprint arXiv:2009.13445}, 2020.

\bibitem[DWZ20]{deng2020stability}
Wen Deng, Jiahong Wu, and Ping Zhang.
\newblock Stability of couette flow for 2d {B}oussinesq system with vertical
  dissipation.
\newblock {\em arXiv preprint arXiv:2004.09292}, 2020.

\bibitem[DWZZ18]{doering2018long}
Charles~R Doering, Jiahong Wu, Kun Zhao, and Xiaoming Zheng.
\newblock Long time behavior of the two-dimensional {B}oussinesq equations
  without buoyancy diffusion.
\newblock {\em Physica D: Nonlinear Phenomena}, 376:144--159, 2018.

\bibitem[DZ19]{dengZ2019}
Yu~Deng and Christian Zillinger.
\newblock Echo chains as a linear mechanism: Norm inflation, modified exponents
  and asymptotics.
\newblock {\em arXiv preprint arXiv:1910.12914}, 2019.

\bibitem[EW15]{elgindi2015sharp}
Tarek~M Elgindi and Klaus Widmayer.
\newblock Sharp decay estimates for an anisotropic linear semigroup and
  applications to the surface quasi-geostrophic and inviscid {B}oussinesq
  systems.
\newblock {\em SIAM Journal on Mathematical Analysis}, 47(6):4672--4684, 2015.

\bibitem[How61]{howard1961note}
Louis~N Howard.
\newblock Note on a paper of john w. miles.
\newblock {\em Journal of Fluid Mechanics}, 10(4):509--512, 1961.

\bibitem[IJ19]{ionescu2018inviscid}
Alexandru~D Ionescu and Hao Jia.
\newblock Inviscid damping near the couette flow in a channel.
\newblock {\em Communications in Mathematical Physics}, pages 1--82, 2019.

\bibitem[IJ20]{ionescu2020nonlinear}
Alexandru~D Ionescu and Hao Jia.
\newblock Nonlinear inviscid damping near monotonic shear flows.
\newblock {\em arXiv preprint arXiv:2001.03087}, 2020.

\bibitem[Jia20]{jia2019linear}
Hao Jia.
\newblock Linear inviscid damping in gevrey spaces.
\newblock {\em Archive for Rational Mechanics and Analysis}, 235(2):1327--1355,
  2020.

\bibitem[LWX{\etalchar{+}}21]{lai2021optimal}
Suhua Lai, Jiahong Wu, Xiaojing Xu, Jianwen Zhang, and Yueyuan Zhong.
\newblock Optimal decay estimates for 2d {B}oussinesq equations with partial
  dissipation.
\newblock {\em Journal of Nonlinear Science}, 31(1):1--33, 2021.

\bibitem[MSHZ20]{masmoudi2020stability}
Nader Masmoudi, Belkacem Said-Houari, and Weiren Zhao.
\newblock Stability of {C}ouette flow for 2d {B}oussinesq system without
  thermal diffusivity.
\newblock {\em arXiv preprint arXiv:2010.01612}, 2020.

\bibitem[MV11]{Villani_long}
Cl{\'e}ment Mouhot and C{\'e}dric Villani.
\newblock On {L}andau damping.
\newblock {\em Acta mathematica}, 207(1):29--201, 2011.

\bibitem[MWGO68]{malmberg1968plasma}
J.~H. Malmberg, C.~B. Wharton, R.~W. Gould, and T.~M. O'{N}eil.
\newblock Plasma wave echo experiment.
\newblock {\em Physical Review Letters}, 20(3):95--97, 1968.

\bibitem[Orr07]{orr1907stability}
William~M'F Orr.
\newblock The stability or instability of the steady motions of a perfect
  liquid and of a viscous liquid.
\newblock In {\em Proceedings of the Royal Irish Academy. Section A:
  Mathematical and Physical Sciences}, pages 69--138. JSTOR, 1907.

\bibitem[TWZZ20]{tao2020stability}
Lizheng Tao, Jiahong Wu, Kun Zhao, and Xiaoming Zheng.
\newblock Stability near hydrostatic equilibrium to the 2d {B}oussinesq
  equations without thermal diffusion.
\newblock {\em Archive for Rational Mechanics and Analysis}, 237(2):585--630,
  2020.

\bibitem[Wid18]{widmayer2018convergence}
Klaus Widmayer.
\newblock Convergence to stratified flow for an inviscid 3d {B}oussinesq
  system.
\newblock {\em Communications in Mathematical Sciences}, 16(6):1713--1728,
  2018.

\bibitem[WSP20]{wu2020stabilizing}
Jiahong Wu, Oussama~Ben Said, and Uddhaba~Raj Pandey.
\newblock The stabilizing effect of the temperature on buoyancy-driven fluids.
\newblock {\em arXiv preprint arXiv:2005.11661}, 2020.

\bibitem[WXZ19]{wu2019stability}
Jiahong Wu, Xiaojing Xu, and Ning Zhu.
\newblock Stability and decay rates for a variant of the 2d
  {B}oussinesq--{B}{\'e}nard system.
\newblock {\em Communications in Mathematical Sciences}, 17(8):2325--2352,
  2019.

\bibitem[WZZ18]{Zhang2015inviscid}
Dongyi Wei, Zhifei Zhang, and Weiren Zhao.
\newblock Linear inviscid damping for a class of monotone shear flow in
  {S}obolev spaces.
\newblock {\em Communications on Pure and Applied Mathematics}, 71(4):617--687,
  2018.

\bibitem[WZZ19]{WZZvorticitydepl}
Dongyi Wei, Zhifei Zhang, and Weiren Zhao.
\newblock {Linear inviscid damping and vorticity depletion for shear flows}.
\newblock {\em Annals of PDE}, 5(1):1--101, 2019.

\bibitem[YL18]{yang2018linear}
Jincheng Yang and Zhiwu Lin.
\newblock Linear inviscid damping for {C}ouette flow in stratified fluid.
\newblock {\em Journal of Mathematical Fluid Mechanics}, 20(2):445--472, 2018.

\bibitem[YOD05]{yu2005fluid}
J.~H. Yu, T.~M. O'{N}eil, and C.~F. Driscoll.
\newblock Fluid echoes in a pure electron plasma.
\newblock {\em Physical review letters}, 94(2):025005, 2005.

\bibitem[Zil16]{Zill5}
Christian Zillinger.
\newblock Linear inviscid damping for monotone shear flows in a finite periodic
  channel, boundary effects, blow-up and critical {S}obolev regularity.
\newblock {\em Arch. Ration. Mech. Anal.}, 221(3):1449--1509, 2016.

\bibitem[Zil17]{Zill6}
Christian Zillinger.
\newblock On circular flows: linear stability and damping.
\newblock {\em J. Differential Equations}, 263(11):7856--7899, 2017.

\bibitem[Zil19]{zillinger2019linear}
Christian Zillinger.
\newblock Linear inviscid damping in {S}obolev and {G}evrey spaces.
\newblock {\em arXiv preprint arXiv:1911.00880}, 2019.

\bibitem[Zil20]{zillinger2020boussinesq}
Christian Zillinger.
\newblock On the {B}oussinesq equations with non-monotone temperature profiles.
\newblock {\em arXiv preprint arXiv:2011.02316}, 2020.

\bibitem[Zil21a]{zillinger2020landau}
Christian Zillinger.
\newblock On echo chains in {L}andau damping: Traveling wave-like solutions and
  {G}evrey 3 as a linear stability threshold.
\newblock {\em Annals of PDE}, 7(1):1--29, 2021.

\bibitem[Zil21b]{zillinger2020enhanced}
Christian Zillinger.
\newblock On enhanced dissipation for the {B}oussinesq equations.
\newblock {\em Journal of Differential Equations}, 282:407--445, 2021.

\end{thebibliography}
\bibliographystyle{alpha}

\end{document}